\documentclass[11pt]{article}

\usepackage[margin=1in]{geometry}
\usepackage{amsmath,amssymb,amsthm}
\usepackage{graphicx}
\usepackage{float}
\usepackage{array}
\usepackage{booktabs}
\usepackage{multirow}
\usepackage{makecell}
\usepackage{xcolor}
\usepackage{algorithm}
\usepackage{algpseudocode}
\usepackage{enumitem}
\usepackage{subcaption}
\usepackage{tabularx}
\usepackage{natbib}
\usepackage{hyperref}

\hypersetup{
    colorlinks=true,
    linkcolor=blue,
    citecolor=blue,
    urlcolor=blue
}
\numberwithin{equation}{section}

\theoremstyle{plain}
\newtheorem{theorem}{Theorem}[section]
\newtheorem{lemma}[theorem]{Lemma}
\newtheorem{proposition}[theorem]{Proposition}
\theoremstyle{definition}
\newtheorem{assumption}{Assumption}
\theoremstyle{remark}
\newtheorem{remark}[theorem]{Remark}

\title{Parameter-Free Adaptive Extragradient Methods for Generalized-Smooth Variational Inequalities\thanks{This research was supported by the National Natural Science Foundation of China (no.~12571329), the Shaanxi Provincial Program for Young Talents, the Fundamental Research Funds for the Central Universities (no.~QTZX26116), and the Xiaomi Young Scholar Program.}}

\author{
Zhaoyang Chu\thanks{Email: \texttt{chuzhaoyyang@163.com}}
\quad
Hongwei Liu\thanks{Email: \texttt{hwliuxidian@163.com}}
\quad
Yakui Huang\thanks{Corresponding author. Email: \texttt{huangyakui@xidian.edu.cn}}\\[1ex]
\small School of Mathematics and Statistics, Xidian University\\
\small Xi'an 710126, Shaanxi, China
}
\date{}

\begin{document}
\maketitle

\begin{abstract}
	Variational inequalities (VIs) provide a unified framework for  optimization, saddle-point,
	equilibrium, and learning problems, where operators need not be globally
	Lipschitz. The $\alpha$-symmetric generalized-smoothness condition allows local operator
	variation to grow with the operator norm,  but existing extragradient methods often
	require unknown problem-specific constants.	To address this issue, we propose PF-AEG, an explicit, line-search-free extragradient method whose stepsizes use	quantities generated during the iterations, without prior knowledge of these constants. Under
	$p$-quasi-sharpness, we prove full-sequence convergence 
	and last-iterate squared-distance rates: linear convergence for
	$0<p\le2$ and $\mathcal{O}(k^{-2/(p-2)})$ for $p>2$. The latter matches
	existing parameter-dependent rates under comparable assumptions, while
	the linear guarantee extends the previous $p=2$ result to 	$0<p\le2$.
	Building on the idea of PF-AEG, we develop AEG-VR, a variance-reduced adaptive
	extragradient method for finite-sum VIs. We prove almost-sure convergence and,
	under mild conditions, obtain analogous
	expected last-iterate rates, while retaining independence from the
	unknown problem-specific constants in stepsize selection.
	Experiments on synthetic generalized-smooth
	VIs and several learning problems show that the proposed methods are
	competitive with, and in several tested regimes outperform, existing
	baselines.
\end{abstract}

\noindent\textbf{Keywords:} variational inequalities, generalized smoothness, extragradient methods, adaptive stepsizes, variance reduction\par

\section{Introduction}
\label{sec:introduction}

Variational inequalities (VIs) provide a unified framework for optimization,
saddle-point problems, equilibrium models, and machine learning.
Given a nonempty closed convex set $U\subseteq\mathbb{R}^m$ and an operator
$F:U\to\mathbb{R}^m$, the VI problem seeks a point $u^*\in U$ such that
\begin{equation}\label{eq:VI}
	\langle F(u^*),u-u^*\rangle \ge 0,
	\qquad \forall u\in U.
\end{equation}
This formulation encompasses constrained convex optimization,
convex--concave saddle-point problems, complementarity problems, and a broad
class of equilibrium models \citep{facchinei2003vi,kinderlehrer1980vi}.
VIs have also become increasingly important in machine learning, particularly
for modeling and analyzing adversarial and multi-agent learning problems
\citep{gemp2018gans,gidel2019variational,kotsalis2022simple}.

\paragraph{Extragradient methods and local adaptation.}
For monotone VIs, Korpelevich's extragradient method and its extensions,
including Mirror-Prox and forward--backward--forward splitting, are among the
most widely used first-order methods
\citep{korpelevich1976extragradient,tseng2000modified,nemirovski2004prox}.
Their classical convergence analyses typically assume that the operator is
globally Lipschitz continuous:
\begin{equation}\label{eq:lipschitz-intro}
	\|F(u)-F(v)\|\le L\|u-v\|,
	\qquad \forall u,v\in U,
\end{equation}
for some $L>0$. Under this regularity, extragradient-type methods have well-established
convergence guarantees, including recent last-iterate results; for example,
\citet{gorbunov2022lastiterate} obtained an $\mathcal{O}(1/k)$ last-iterate rate for unconstrained monotone Lipschitz VIs.

A related line of work addresses stepsize selection without requiring a known
global Lipschitz constant. Representative examples include the adaptive golden-ratio algorithm of
\citet{malitsky2018graal}, self-adaptive projection and extragradient schemes
for Lipschitz-type VIs \citep{yang2019strong,liu2020weak}, and universal or locally adaptive
Mirror-Prox and golden-ratio frameworks \citep{bachlevy2019universal,enenguyen2022adaptive,alacaoglu2023golden}.
These methods demonstrate the value of local adaptation, but they primarily address unknown Lipschitz or related regularity rather than the operator-dependent behavior considered here.

\paragraph{Generalized smoothness beyond global Lipschitz continuity.} 
Global Lipschitz continuity can be restrictive when operator regularity varies
significantly across the domain. Empirical observations in deep-network
training, including LSTMs, show that local smoothness can vary substantially
along the optimization trajectory and correlate with the gradient norm,
motivating $(L_0,L_1)$-smoothness as a model for such non-Lipschitz behavior
\citep{zhang2019clipping,crawshaw2022robustness}. The
$\alpha$-symmetric generalized-smoothness framework of
\citet{chen2023generalized} further extends this model by allowing a broader
class of growth behaviors, covering, among other examples, high-order
polynomial and phase-retrieval objectives.

In the VI setting, the $\alpha$-symmetric generalized-smoothness condition
takes the following form: for some $L_0,L_1\ge0$ and
$\alpha\in(0,1]$,
\begin{equation}\label{eq:intro-alpha-smooth}
	\|F(u)-F(v)\|\le
	\left(L_0+L_1\max_{\theta\in[0,1]}
	\|F(\theta u+(1-\theta)v)\|^\alpha
	\right)\|u-v\|,
	\qquad u,v\in U.
\end{equation}
When $L_1=0$, this condition reduces to global Lipschitz continuity;
when $L_1>0$, the admissible local variation can increase with the operator
norm.

Extragradient-type methods under generalized smoothness have recently been
studied in both deterministic and stochastic settings.
\citet{vankov2024gsvimethods} analyzed projection, extragradient, and
Popov-type methods under generalized smoothness and relaxed monotonicity
conditions, while \citet{vankov2025gssvi} developed clipped stochastic
projection and extragradient methods under
$\alpha$-symmetric generalized smoothness and $p$-quasi-sharpness.
For the unconstrained case $U=\mathbb{R}^m$, the VI condition reduces to the
root-finding problem $F(u^*)=0$. In this special setting,
\citet{choudhury2025elllipschitz} studied extragradient methods under the
related $(L_0,L_1)$-Lipschitz condition. These results show that extragradient methods can remain effective beyond global Lipschitz continuity. However, their stepsize constructions typically
involve problem-specific constants, while stochastic schemes may
additionally rely on clipping. This motivates the question of whether an explicit adaptive stepsize can be constructed solely from quantities generated during the iterations, without
prior knowledge of the generalized-smoothness parameters.

\paragraph{Quasi-sharpness and last-iterate rates.}
Generalized smoothness controls local operator variation but does not by
itself quantify how rapidly the iterates approach the solution set.
To derive explicit last-iterate guarantees, we additionally consider the
$p$-quasi-sharpness condition
\begin{equation}\label{eq:intro-quasi-sharp}
	\langle F(u),u-u^*\rangle
	\ge\mu\operatorname{dist}^p(u,U^*),
	\qquad
	u\in U,\quad u^*\in U^*,
\end{equation}
where $\mu>0$, $p>0$, and $U^*$ denotes the solution set of the VI.
This condition provides an error-bound-type relation between the operator and
the distance to the solution set. In particular, $p=2$ corresponds to a quadratic error-bound-type geometry, whereas other values of $p$ describe different growth behavior near the
solution set. Error-bound and weak-sharpness conditions have been used to strengthen
qualitative convergence results into explicit last-iterate guarantees
\citep{malitsky2018graal,nan2023svrgeg}, and quasi-sharpness plays a central
role in recent analyses of generalized-smooth VIs
\citep{vankov2024gsvimethods,vankov2025gssvi}.


Under generalized smoothness and $p$-quasi-sharpness,
\citet{vankov2024gsvimethods} established
$\mathcal{O}(k^{-2/(p-2)})$ convergence for $p>2$ and linear convergence
for $p=2$ with parameter-dependent stepsizes. These results provide a natural benchmark for asking whether analogous last-iterate guarantees can be achieved without prior knowledge of the
underlying problem-specific parameters.

\paragraph{Finite-sum VIs and variance reduction.}
Another challenge arises in large-scale VIs with the finite-sum structure
\begin{equation}\label{finite-sum}
	F(u)=\frac{1}{N}\sum_{i=1}^N F_i(u),
\end{equation}
where evaluating the full operator $F$ can be prohibitively expensive when
$N$ is large.
Variance-reduction techniques have therefore been developed to reduce this
computational burden. \citet{palaniappan2016stochastic} developed variance-reduced methods with
linear convergence for finite-sum saddle-point problems, while
\citet{iusem2017variance} studied stochastic extragradient schemes with
variance reduction for stochastic VIs. More generally, \citet{alacaoglu2022stochvrvi} established a unified variance-reduced framework for extragradient and related methods for monotone
inclusions. Last-iterate convergence guarantees for SVRG-extragradient have also been
obtained under error-bound and weak sharpness conditions
\citep{nan2023svrgeg}.

These variance-reduced analyses are primarily developed under standard
Lipschitz-type regularity. Under generalized smoothness, however, the stochastic estimation error must be controlled together with operator-dependent local variation, while
stepsize selection should ideally remain independent of unknown
problem-specific constants. Combining these requirements in a variance-reduced extragradient framework therefore presents an additional challenge in the finite-sum setting.

We address the above issues through a common adaptive extragradient framework.
For deterministic VIs, we propose PF-AEG, an explicit, line-search-free
adaptive extragradient method whose stepsizes are computed solely from
quantities generated during the iterations, without prior knowledge of the
generalized-smoothness parameters $L_0,L_1,\alpha$ or the
quasi-sharpness parameters $p,\mu$.
Building on the idea of PF-AEG, we develop AEG-VR, a variance-reduced adaptive
extragradient method for finite-sum VIs.

\paragraph{Main contributions.}
Our main contributions are summarized as follows.
\begin{enumerate}[label=(\roman*)]
	\item \textbf{Parameter-free adaptive extragradient method.}
	We propose PF-AEG for VIs with $\alpha$-symmetric generalized smooth operators.
	Its explicit stepsize rule is computed solely from quantities generated by the
	extragradient iterations and requires neither a line search nor prior knowledge
	of $L_0,L_1,\alpha,p$, or $\mu$. A key technical ingredient is to show
	that the resulting adaptive stepsizes remain bounded away from zero despite
	the absence of these problem-specific constants. To the best of our knowledge,
	PF-AEG is the first explicit, line-search-free extragradient method for this
	class whose stepsize selection is independent of the generalized-smoothness
	and quasi-sharpness parameters.

	\item \textbf{Full-sequence convergence and last-iterate rates.}
 Under $p$-quasi-sharpness, we prove convergence of the iteration sequence generated by PF-AEG and	establish last-iterate rates for the squared distance to the solution set:
 linear convergence for $0<p\le2$ and
	$\mathcal{O}(k^{-2/(p-2)})$ for $p>2$.
	The $p>2$ rate matches existing parameter-dependent results under
	comparable assumptions, while for $0<p\le2$, the linear rate
	extends the $p=2$ result of \citet{vankov2024gsvimethods}.

	\item \textbf{Variance-reduced adaptive extragradient method.}
	For the finite-sum model \eqref{finite-sum}, we develop AEG-VR by combining
	the adaptive principle of PF-AEG with an SVRG-type variance-reduction
	mechanism and a randomized snapshot update.
	We prove almost-sure convergence and, under the uniform tail conditions
	used in the rate analysis, obtain analogous expected last-iterate rates:
	linear convergence for $0<p\le2$ and
	$\mathcal{O}(k^{-2/(p-2)})$ for $p>2$.
	The stepsize selection retains independence from the unknown
	problem-specific constants.

	\item \textbf{Numerical evaluation.}
	We evaluate PF-AEG and AEG-VR on a synthetic generalized-smooth VI and
	several learning problems, including logistic regression, phase retrieval,
	and distributionally robust optimization. Across the tested instances, PF-AEG often reduces the
	reported error or objective value faster per iteration than the adaptive,
	clipped, normalized, and backtracking baselines. In several finite-sum
	experiments, AEG-VR also outperforms the tested stochastic and
	variance-reduced baselines at matched reported sample budgets.
\end{enumerate}

Tables~\ref{tab:deterministic_generalized_stepsizes} and
\ref{tab:stochastic_generalized_stepsizes} summarize the closest theoretical
comparisons. The novel extragradient result of \citet{choudhury2025elllipschitz} concerns the unconstrained root-finding case $F(u^*)=0$, which corresponds to the VI problem with
$U=\mathbb{R}^m$. It is included to compare generalized-smooth stepsize
constructions, rather than as a direct comparison under identical problem
settings.

\begin{table}[H]
	\centering
	\caption{Comparison of stepsize rules and reported convergence guarantees for
		deterministic extragradient-type methods beyond global Lipschitz continuity.}
	\label{tab:deterministic_generalized_stepsizes}
	\renewcommand{\arraystretch}{2.2}
	\setlength{\tabcolsep}{2pt}
	\arrayrulewidth=0.4pt
	\resizebox{\textwidth}{!}{%
		\scriptsize
		\begin{tabular}{
				|>{\centering\arraybackslash}m{2.1cm}|
				>{\centering\arraybackslash}m{0.7cm}|
				>{\tiny\centering\arraybackslash}m{10.9cm}|
				>{\tiny\centering\arraybackslash}m{2.7cm}|
			}
			\hline
			\textbf{Algorithm}
			& \boldmath$\alpha$
			& \textbf{Stepsizes rule}
			& \textbf{Convergence}
			\\
			\hline
			
			\multirow{2}{2.2cm}{\centering\textbf{Extragradient}\newline \cite{vankov2024gsvimethods}}
			&
			$1$
			&
			$\displaystyle \lambda_k =
			\min\left\{\frac{1}{4\mu},
			\frac{1}{2\sqrt{2}eL_0},
			\frac{1}{2\sqrt{2}eL_1\|F(z_k)\|}
			\right\}
			$
			&
			$\displaystyle
			\begin{gathered}
				\mathcal{O}\!\left(k^{-2/(p-2)}\right),~ p>2;\\[0.25pt]
				\text{linear},~ p=2
			\end{gathered}
			$
			\\
			\cline{2-4}
			&
			$(0,1)$
			&
			$\displaystyle \lambda_k =
			\min\left\{\frac{1}{4\mu},
			\frac{1}{3\sqrt{2}K_0},
			\frac{1}{\|F(z_k)\|},
			\frac{1}{3\sqrt{2}K_1\|F(z_k)\|^\alpha},
			\frac{1}{3\sqrt{2}K_2}
			\right\}
			$
			&
			$\displaystyle
			\begin{gathered}
				\mathcal{O}\!\left(k^{-2/(p-2)}\right),~ p>2;\\[0.25pt]
				\text{linear},~ p=2
			\end{gathered}
			$
			\\
			\hline
			
			\multirow{2}{2.2cm}{\centering\textbf{Popov}\newline
			\cite{vankov2024gsvimethods}}
			&
			$1$
			&
			$\displaystyle
			\lambda_k =
			\min\left\{
			\frac{1}{4\mu},
			\frac{1}{\|F(z_{k-1})\|},
			\frac{1}{
				2\sqrt{2}
				(L_0+L_1\|F(z_{k-1})\|)
				e^{L_1\|t_k-z_{k-1}\|+1}}
			\right\}
			$
			&
			$\displaystyle
			\begin{gathered}
				\mathcal{O}\!\left(k^{-2/(p-2)}\right),~ p>2;\\[0.25pt]
				\text{linear},~ p=2
			\end{gathered}
			$
			\\
			\cline{2-4}
			&
			$(0,1)$
			&
			$\displaystyle
			\begin{gathered}
				\lambda_k = \min\left\{
				\frac{1}{\|F(z_{k-1})\|},
				\frac{1}{6\sqrt{2}K_0},
				\frac{1}{6\sqrt{2}K_1\|F(z_{k-1})\|^\alpha}, 
				\frac{1}{
					6\sqrt{2}K_2
					(\|t_k-z_{k-1}\|+1)^{\alpha/(1-\alpha)}
				},
				\frac{1}{4\mu}
				\right\}
			\end{gathered}
			$
			&
			$\displaystyle
			\begin{gathered}
				\mathcal{O}\!\left(k^{-2/(p-2)}\right),~ p>2;\\[0.25pt]
				\text{linear},~ p=2
			\end{gathered}
			$
			\\
			\hline
			
			\multirow{2}{2.2cm}{\centering\textbf{Novel extragradient}\newline \cite{choudhury2025elllipschitz}}
			&
			$1$
			&
			$\displaystyle
			\lambda_k =
			\frac{0.21}{L_0+L_1\|F(z_k)\|}
			$
			&
			sublinear 
			\\
			\cline{2-4}
			&
			$(0,1)$
			&
			$\displaystyle
			\lambda_k =
			\frac{0.61}{
				2K_0+
				(2K_1+2^{1-\alpha}K_2^{1-\alpha})
				\|F(z_k)\|^\alpha}
			$
			&
			sublinear
			\\
			\hline
			
			\multirow{1}{2.2cm}{\centering\textbf{PF-AEG (Proposed)}}
			&
			$(0,1]$
			&
			$\displaystyle
			\lambda_k =
			\min\left\{(1+\eta_{k-1})\lambda_{k-1},
			\frac{\|z_{k-1}-t_{k-1}\|^2}
			{2\lambda_{k-1}\|F(z_{k-1})-F(t_{k-1})\|^2},
			\frac{1}{\|F(z_k)\|^2}
			\right\},~\eta_{k-1}= \frac{1}{k^{2}}
			$
			&
			$\displaystyle
			\begin{gathered}
				\mathcal{O}\!\left(k^{-2/(p-2)}\right),~ p>2;\\[0.25pt]
				\text{\bfseries linear},~ \boldsymbol{0<p\leq 2}
			\end{gathered}
			$
			\\
			\hline
		\end{tabular}%
	}
	
	\vspace{1mm}
	\begin{minipage}{0.98\textwidth}
		\footnotesize	
		\emph{Notes.}
		Here, $\mu$ denotes the $p$-quasi-sharpness constant, while
		$L_0,L_1,K_0,K_1,K_2$ denote the regularity constants appearing in the
		corresponding generalized-smoothness analyses. The Extragradient and Popov methods of
		\cite{vankov2024gsvimethods} and PF-AEG are analyzed for VIs under
		$p$-quasi-sharpness. Novel extragradient in \cite{choudhury2025elllipschitz} considers the
		unconstrained root-finding problem $F(u^*)=0$, and assumes monotonicity rather than	$p$-quasi-sharpness.
	\end{minipage}
\end{table}

\begin{table}[H]
	\centering
	\caption{Comparison of stepsize rules and reported convergence guarantees for
		stochastic and finite-sum variance-reduced extragradient methods under
		$\alpha$-symmetric generalized smoothness and $p$-quasi-sharpness.}
	\label{tab:stochastic_generalized_stepsizes}
	\renewcommand{\arraystretch}{1.35}
	\setlength{\tabcolsep}{4pt}
	\arrayrulewidth=0.4pt
	\scriptsize
	\resizebox{\textwidth}{!}{%
		\begin{tabular}{
				|>{\centering\arraybackslash}m{2.1cm}|
				>{\centering\arraybackslash}m{0.7cm}|
				>{\raggedright\centering\arraybackslash}m{13.0cm}|
				>{\centering\arraybackslash}m{4.4cm}|
			}
			\hline
			\textbf{Algorithm}
			& \boldmath$\alpha$
			& \textbf{Stepsizes rule}
			& \textbf{Convergence}
			\\
			\hline
			
			\textbf{Clipped stochastic extragradient} \cite{vankov2025gssvi}
			&
			$(0,\frac{1}{2}]$
			&
			$\lambda_k=\gamma_k\min\left\{1,\frac{1}{\|F(z_k,\xi_k^1)\|}\right\}$
			&
			$\displaystyle
			\begin{gathered}
				\mathcal{O}(k^{-2(1-q)/p}),\;\; p>2,\;\; q\in(\frac{1}{2},1);\\[0.25pt]
				\mathcal{O}\!\left(1/k\right),\quad p=2, 
			\end{gathered}
			$
			\\
			\hline
			
			\textbf{AEG-VR (Proposed)}
			&
			$(0,1]$
			&
			$\displaystyle
			\lambda_{k} =
			\min\left\{
			(1+\eta_{k-1})\lambda_{k-1},
			\frac{\beta \|t^{k-1/2}-s^{k-1}\|}{\|F_{\xi_{k-1}}(t^{k-1/2}) - F_{\xi_{k-1}}(s^{k-1})\|},
			\frac{1}{\max_{i\in[N]}\|F_i(s^k)\|^2}
			\right\},
			\quad \eta_{k-1}= \frac{1}{k^{2}}
			$
			&
			$\displaystyle
			\begin{gathered}
				\mathcal{O}\!\left(k^{-2/(p-2)}\right),\quad p>2;\\[0.25pt]
				\text{\bfseries linear},\quad \boldsymbol{0<p\leq 2}
			\end{gathered}
			$
			\\
			\hline
		\end{tabular}%
	}
	
	\vspace{1mm}
	\begin{minipage}{0.98\textwidth}
		\footnotesize
		\emph{Notes.}
		For clipped stochastic extragradient, $F(z_k,\xi_k^1)$ denotes a stochastic oracle evaluation of $F$, and $\{\gamma_k\}$ satisfies	$\sum_{k=0}^{\infty}\gamma_k^2<\infty$.
		For AEG-VR, $[N]:=\{1,\ldots,N\}$, $\xi\in[N]$ denotes a sampled index,
		and $F_{\xi_{k-1}}$ is the corresponding component operator.
		Moreover, $\max_{i\in[N]}\|F_i(s^k)\|$ denotes the maximum
		component-operator norm at $s^k$, and $\beta\in(0,\frac12)$ is a user-specified algorithmic parameter whose	selection is independent of $L_0,L_1,\alpha,p$, and $\mu$. The displayed AEG-VR rates are expected last-iterate rates under the uniform
		tail conditions used in the rate analysis.
	\end{minipage}
\end{table}

\paragraph{Organization.}
The rest of this paper is organized as follows. Section~\ref{sec:preliminaries} introduces notations, assumptions, and auxiliary facts.  Section~\ref{sec:pf-aeg} presents PF-AEG and its convergence analysis. Section~\ref{sec:aeg-vr} develops AEG-VR for finite-sum VIs.
Section~\ref{sec:experiments} reports the numerical experiments, and Section~\ref{sec:conclusion} concludes the paper.

\section{Preliminaries}\label{sec:preliminaries}

This section collects the notation, assumptions, and auxiliary results used in
the convergence analysis.

\subsection{Notation and Assumptions}

Throughout the paper, $\|\cdot\|$ denotes the Euclidean norm and
$\langle\cdot,\cdot\rangle$ the standard inner product in
$\mathbb{R}^m$. For a nonempty closed convex set
$C\subseteq\mathbb{R}^m$, let $P_C$ denote the Euclidean projection onto
$C$:
\begin{equation*}
	P_C(x):=\arg\min_{u\in C}\|u-x\|.
\end{equation*}
For a nonempty set $S\subseteq\mathbb{R}^m$, define
\begin{equation*}
	\operatorname{dist}(x,S):=\inf_{s\in S}\|x-s\|.
\end{equation*}
We denote by $U^*$ the solution set of the VI \eqref{eq:VI}.

We make the following assumptions.

\begin{assumption}\label{ass1}
	The set $U\subseteq\mathbb{R}^m$ is nonempty, closed, and convex.
\end{assumption}

\begin{assumption}[$\alpha$-symmetric generalized smoothness]\label{ass2}
	The operator $F:U\to\mathbb{R}^m$ is continuous, and
	\eqref{eq:intro-alpha-smooth} holds for some
	$L_0,L_1\ge0$ and $\alpha\in(0,1]$.
\end{assumption}

\begin{assumption}[$p$-quasi-sharpness]\label{ass3}
	The solution set $U^*$ is nonempty, and
	\eqref{eq:intro-quasi-sharp} holds for some
	$p>0$ and $\mu>0$.
\end{assumption}

Under Assumptions~\ref{ass1}--\ref{ass2}, the solution set $U^*$ is closed.
Together with Assumption~\ref{ass3}, this implies that, for every
$x\in\mathbb{R}^m$, there exists at least one $x^*\in U^*$ such that
\begin{equation*}
	\|x-x^*\|=\operatorname{dist}(x,U^*).
\end{equation*}
Such a nearest point need not be unique, since $U^*$ is not assumed to be
convex.

\subsection{Auxiliary Results}

We first recall an equivalent characterization of
$\alpha$-symmetric generalized smoothness.
\begin{proposition}[\cite{chen2023generalized}, Proposition 1]\label{pro3.1}
	Assume that $U\subseteq\mathbb{R}^m$ is a nonempty set, and $F: U\to\mathbb{R}^m$ is an operator. Then,
	\begin{enumerate}
	\item[$\mathbf{(i)}$] $F$ is $\alpha$-symmetric smooth with $\alpha\in(0,1)$ and constants $L_0, L_1\geq0$, if and only if,	for all $y,y'\in U$,
	\begin{equation*}
		\|F(y)-F(y')\|\leq\|y-y'\|\left(K_0+K_1\|F(y')\|^\alpha
		+K_2\|y-y'\|^{\frac{\alpha}{1 - \alpha}}\right),
	\end{equation*}
	where ${K}_{0} = {L}_{0}\left( 2^{\frac{\alpha^2}{1 - \alpha}} + 1\right)$, ${K}_{1}=L_1{2}^{\frac{\alpha^2}{1 - \alpha}}$, and ${K}_2= {L}_1^{\frac{1}{1 - \alpha}}{2}^{\frac{\alpha^2}{1 - \alpha}}3^{\alpha}(1 - \alpha)^{\frac{\alpha}{1 - \alpha}}$.
	
	\item[$\mathbf{(ii)}$] $F$ is $\alpha$-symmetric smooth with $\alpha=1$ and constants $L_0, L_1\geq0$, if and only if,  for all $y,y'\in U$,
	\begin{equation*}
		\|F(y)-F(y')\|\leq\|y-y'\|\bigl(L_0+L_1\|F(y')\|\bigr)\exp\bigl(L_1\|y-y'\|\bigr).
	\end{equation*}
\end{enumerate}
\end{proposition}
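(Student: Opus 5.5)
The plan is to follow the Grönwall-type argument of \citet{chen2023generalized}: restrict $F$ to the segment joining $y'$ to $y$ and control the growth of $\|F\|$ along it. Because the maximum in \eqref{eq:intro-alpha-smooth} is taken over whole segments, we assume (as is implicit in the statement) that the relevant segments lie in the domain; for convex $U$ this is automatic. Set $y_\theta:=y'+\theta(y-y')$, $d:=\|y-y'\|$, and $g(\theta):=\|F(y_\theta)\|$. Applying \eqref{eq:intro-alpha-smooth} to the pair $(y_\theta,y')$, whose segment is a sub-segment of $[y',y]$, gives
\[
g(\theta)\le \|F(y')\|+\theta d\bigl(L_0+L_1 M(\theta)^\alpha\bigr),\qquad M(\theta):=\max_{s\in[0,\theta]}g(s).
\]
The right-hand side is nondecreasing in $\theta$, so we may replace $g(\theta)$ by $M(\theta)$. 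This yields a self-referential inequality for the nondecreasing function $M$.

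For $\alpha=1$ (part (ii)), a one-shot bound is not enough, because $\theta dL_1$ may exceed $1$. Instead, apply the inequality between nearby points $y_s$ and $y_\theta$ to get the local relation
\[
M(\theta)\le M(s)+(\theta-s)d\bigl(L_0+L_1M(\theta)\bigr).
\]
Then pass to the differential form $\frac{d}{d\theta}M\le d(L_0+L_1M)$, made rigorous by partitioning $[0,1]$ into $n$ pieces and letting $n\to\infty$. Integrating gives
\[
L_0+L_1M(1)\le \bigl(L_0+L_1\|F(y')\|\bigr)e^{L_1 d}.
\]
Substituting this into \eqref{eq:intro-alpha-smooth} yields $\|F(y)-F(y')\|\le d(L_0+L_1\|F(y')\|)e^{L_1d}$ (the case $L_1=0$ is trivial). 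For the converse, apply the exponential inequality at the point on the segment that attains the maximum norm. We then need to recover the $\max_\theta$ form. The exponential factor can be absorbed by using the inequality on short subsegments and chaining, so that $e^{L_1 d/n}\to 1$. This is the standard argument, and I would only sketch it.

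For $\alpha\in(0,1)$ (part (i)), use the same differential inequality
\[
\frac{d}{d\theta}M\le d\bigl(L_0+L_1M^\alpha\bigr).
\]
Compare with the ODE solution, using the substitution $w=M^{1-\alpha}$ (Bihari--LaSalle). When $L_1M^\alpha\ge L_0$, we get $\frac{d}{d\theta}M^{1-\alpha}\le 2(1-\alpha)dL_1$, so
\[
M(1)^{1-\alpha}\le \max\bigl\{\|F(y')\|^{1-\alpha},(L_0/L_1)^{(1-\alpha)/\alpha}\bigr\}+2(1-\alpha)L_1d.
\]
Raising to the power $\alpha/(1-\alpha)$ and applying $(a+b)^q\le 2^{\max(q-1,0)}(a^q+b^q)$ gives
\[
M(1)^\alpha\lesssim \|F(y')\|^\alpha+L_0/L_1+\bigl(L_1d\bigr)^{\alpha/(1-\alpha)}.
\]
Plugging this into \eqref{eq:intro-alpha-smooth} produces exactly the $K_0,K_1,K_2$ structure. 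The specific constants $2^{\alpha^2/(1-\alpha)}$, $3^\alpha$ and $(1-\alpha)^{\alpha/(1-\alpha)}$ come from careful bookkeeping in this power inequality. For the converse, take the point $y_\theta$ attaining the maximum, bound $\|F(y_\theta)\|$ via the given inequality, and again chain short subsegments so that the $K_2$ term vanishes in the limit.

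The main obstacle is rigorously justifying the differential inequality for the merely continuous function $M$, and matching the stated constants exactly. I would handle the first by a discrete partition argument with uniform continuity of $F$ on the compact segment. For the second, since the proposition is quoted from \citet[Proposition 1]{chen2023generalized}, I would ultimately defer to that reference for the exact constants.
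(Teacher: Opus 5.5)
The paper gives no proof of this proposition; it is imported verbatim from \citet{chen2023generalized}. Your restriction-to-the-segment/Gr\"onwall argument is the standard route for the ``only if'' directions, and that part is sound.

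For $\alpha=1$ your local relation even makes the continuity argument unnecessary. On a uniform partition with $n>L_1d$ it gives $(1-L_1d/n)\bigl(L_0+L_1M(\theta_{i+1})\bigr)\le L_0+L_1M(\theta_i)$. Hence $L_0+L_1M(1)\le(1-L_1d/n)^{-n}\bigl(L_0+L_1\|F(y')\|\bigr)\to e^{L_1d}\bigl(L_0+L_1\|F(y')\|\bigr)$. Your chaining argument for the converse of (ii) also returns exactly $(L_0,L_1)$.

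The converse of (i), however, cannot be proved as stated, because it is false. Chaining the $K$-inequality over $n$ subsegments gives only $\|F(y)-F(y')\|\le d\bigl(K_0+K_1\max_{\theta}\|F(y_\theta)\|^\alpha\bigr)$. That is $\alpha$-symmetric smoothness with constants $(K_0,K_1)$, not $(L_0,L_1)$, and nothing better holds in general. If $L_1=0<L_0$, then $K_1=K_2=0$ and $K_0=L_0\bigl(2^{\alpha^2/(1-\alpha)}+1\bigr)>L_0$. So $F(y)=K_0y$ on $U=\mathbb{R}^m$ satisfies the $K$-inequality but is not $L_0$-Lipschitz. The ``if and only if'' in (i) is therefore only an equivalence of operator classes up to a change of constants, and you should state and prove the converse in that form. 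This is harmless for the paper, which uses only the ``only if'' direction (in Theorem~\ref{th61}, Lemma~\ref{lem42} and Theorem~\ref{th41}).

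On the constants, your remark that they come from careful bookkeeping in the final power inequality does not hold for your route. Write your bound as $M(1)^{1-\alpha}\le A+B$ with $B=2(1-\alpha)L_1d$, and set $q=\alpha/(1-\alpha)$. Take $\|F(y')\|=0<L_0$, so $A^q=L_0/L_1$. Recovering the stated $K_0,K_2$ would then require $(A+B)^q\le 2^{\alpha^2/(1-\alpha)}A^q+(3/2)^\alpha B^q$ for all $A,B\ge0$. This fails for $\alpha$ near $1$: at $\alpha=0.9$, $A=1$, $B=7$, the left side is $8^9\approx1.3\cdot10^8$ and the right side is about $5.8\cdot10^7$.

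The loss is the factor $2$ from the case split $L_1M^\alpha\ge L_0$. Shift instead of splitting. With $G:=M+(L_0/L_1)^{1/\alpha}$, concavity of $t\mapsto t^\alpha$ gives $L_0+L_1M^\alpha\le 2^{1-\alpha}L_1G^\alpha$, so $\tfrac{d}{d\theta}G^{1-\alpha}\le 2^{1-\alpha}(1-\alpha)L_1d$. Take $G(1)^{1-\alpha}\le G(0)^{1-\alpha}+2^{1-\alpha}(1-\alpha)L_1d$ and raise it to the power $1/(1-\alpha)$ using $(a+b)^{1/(1-\alpha)}\le 2^{\alpha/(1-\alpha)}\bigl(a^{1/(1-\alpha)}+b^{1/(1-\alpha)}\bigr)$. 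Then raise to the power $\alpha$ to get
\[
M(1)^\alpha\le 2^{\frac{\alpha^2}{1-\alpha}}\Bigl(\|F(y')\|^\alpha+\frac{L_0}{L_1}+2^\alpha\bigl((1-\alpha)L_1d\bigr)^{\frac{\alpha}{1-\alpha}}\Bigr).
\]
This yields exactly the stated $K_0$ and $K_1$, and a $K_2$ with $2^\alpha$ in place of $3^\alpha$, which is smaller. For the paper's purposes any finite constants would suffice anyway, since they only enter $M$, $\bar M$ and $\underline{\lambda}$.
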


The following properties of Euclidean projections will be used in the
analysis.

\begin{lemma}[\citealp{facchinei2003vi}, Theorem~1.5.5 and Lemma~12.1.13]
	\label{lem21}
	Let $U\subseteq\mathbb{R}^m$ be nonempty, closed, and convex. Then, for
	every $u\in U$ and $v\in\mathbb{R}^m$,
	\begin{equation}\label{2.1}
		\langle v-P_U(v),u-P_U(v)\rangle\le0,
	\end{equation}
	and
	\begin{equation}\label{2.2}
		\|u-P_U(v)\|^2\leq\|u-v\|^2-\|v-P_U(v)\|^2.
	\end{equation}
	Moreover, $P_U$ is nonexpansive:
	\begin{equation}\label{2.3}
		\|P_U(x)-P_U(y)\|\le\|x-y\|,
		\qquad x,y\in\mathbb{R}^m.
	\end{equation}
\end{lemma}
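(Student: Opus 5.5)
The three assertions of Lemma~\ref{lem21} all follow from the variational characterization of the projection, so I will prove \eqref{2.1} first and obtain \eqref{2.2} and \eqref{2.3} from it. Fix $v\in\mathbb{R}^m$ and write $p:=P_U(v)$. This point exists and is unique by Assumption~\ref{ass1}: the map $x\mapsto\|x-v\|^2$ is continuous, coercive and strictly convex, and $U$ is nonempty, closed and convex.

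To prove \eqref{2.1}, take any $u\in U$ and any $t\in(0,1]$. By convexity, $p+t(u-p)\in U$. Minimality of $p$ then gives
\begin{equation*}
\|v-p\|^2\le\|v-p-t(u-p)\|^2=\|v-p\|^2-2t\langle v-p,u-p\rangle+t^2\|u-p\|^2 .
\end{equation*}
Cancel $\|v-p\|^2$, divide by $t$, and let $t\downarrow0$. This yields $\langle v-p,u-p\rangle\le0$, which is \eqref{2.1}.

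For \eqref{2.2}, expand using $u-v=(u-p)-(v-p)$:
\begin{equation*}
\|u-v\|^2=\|u-p\|^2+\|v-p\|^2-2\langle u-p,v-p\rangle .
\end{equation*}
By \eqref{2.1}, the cross term $-2\langle u-p,v-p\rangle$ is nonnegative. Dropping it and rearranging gives $\|u-p\|^2\le\|u-v\|^2-\|v-p\|^2$.

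For nonexpansiveness \eqref{2.3}, set $p=P_U(x)$ and $q=P_U(y)$. Apply \eqref{2.1} twice: with $v=x$, $u=q$, and with $v=y$, $u=p$. This gives
\begin{equation*}
\langle x-p,q-p\rangle\le0,\qquad \langle y-q,p-q\rangle\le0 .
\end{equation*}
Adding the two inequalities gives $\langle (x-y)-(p-q),\,p-q\rangle\ge0$, that is, $\|p-q\|^2\le\langle x-y,p-q\rangle$. By Cauchy--Schwarz, $\|p-q\|^2\le\|x-y\|\,\|p-q\|$. If $p=q$ the claim is trivial; otherwise divide by $\|p-q\|$ to get $\|p-q\|\le\|x-y\|$.

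The only step needing care is the limiting argument for \eqref{2.1}. There, convexity of $U$ is essential, because it makes the segment from $p$ to $u$ admissible. Everything after that is algebra.
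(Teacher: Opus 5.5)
Your proof is correct and matches the standard textbook argument behind the paper's citation to Facchinei--Pang; the paper gives no proof of its own. You obtain \eqref{2.1} from minimality along the segment $p+t(u-p)$, \eqref{2.2} by expanding $\|(u-p)-(v-p)\|^2$, and nonexpansiveness by adding two instances of \eqref{2.1} and then applying Cauchy--Schwarz. The only blemishes are cosmetic: existence and uniqueness of $P_U(v)$ follow from the lemma's own hypotheses rather than Assumption~\ref{ass1}, and calling the projection $p$ clashes with the quasi-sharpness exponent used throughout the paper.
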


\begin{lemma}[\citealp{bauschke2017convex}, Lemma~5.31]
	\label{lem23}
	Let $\{v_k\}$, $\{w_k\}$, $\{a_k\}$, and $\{b_k\}$ be
	nonnegative scalar sequences satisfying
	\begin{equation*}
		v_{k+1}\le(1+a_k)v_k-w_k+b_k,
		\qquad k\ge0.
	\end{equation*}
	If
	\begin{equation*}
		\sum_{k=0}^{\infty}a_k<\infty,
		\qquad
		\sum_{k=0}^{\infty}b_k<\infty,
	\end{equation*}
	then $\{v_k\}$ converges to some $v\ge0$, and
	\begin{equation*}
		\sum_{k=0}^{\infty}w_k<\infty.
	\end{equation*}
\end{lemma}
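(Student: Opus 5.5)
This is the classical deterministic Robbins--Siegmund argument. The plan is to remove the multiplicative factor $(1+a_k)$ by a normalization, which reduces the claim to the case $a_k\equiv0$. That case is a monotone-sequence argument.

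\textbf{Step 1 (normalizing products).} Define $P_0:=1$ and $P_k:=\prod_{j=0}^{k-1}(1+a_j)$ for $k\ge1$. Since $a_j\ge0$, the sequence $\{P_k\}$ is nondecreasing with $P_k\ge1$. From $\log(1+a)\le a$ we get
\[
P_k\le \exp\Bigl(\sum_{j=0}^\infty a_j\Bigr)<\infty .
\]
Hence $P_k\to P_\infty\in[1,\infty)$.

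\textbf{Step 2 (reduced recursion).} Set $\tilde v_k:=v_k/P_k$, $\tilde w_k:=w_k/P_{k+1}$ and $\tilde b_k:=b_k/P_{k+1}$. Dividing the hypothesis by $P_{k+1}=(1+a_k)P_k$ gives
\[
\tilde v_{k+1}\le \tilde v_k-\tilde w_k+\tilde b_k .
\]
All three new sequences are nonnegative, and $\sum_k\tilde b_k\le\sum_k b_k<\infty$ because $P_{k+1}\ge1$.

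\textbf{Step 3 (monotone auxiliary sequence).} Define $s_k:=\tilde v_k+\sum_{j\ge k}\tilde b_j$, which is finite and nonnegative. The reduced recursion yields $s_{k+1}\le s_k-\tilde w_k\le s_k$. So $\{s_k\}$ is nonincreasing and bounded below by $0$, and therefore converges.

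\textbf{Step 4 (conclusions).}
\begin{itemize}[label=--]
\item Since the tail $\sum_{j\ge k}\tilde b_j$ tends to $0$, the sequence $\tilde v_k$ converges.
\item Hence $v_k=P_k\tilde v_k$ converges to $v:=P_\infty\lim_k\tilde v_k$, and $v\ge0$.
\item Telescoping $s_{k+1}\le s_k-\tilde w_k$ from $0$ to $n$ gives $\sum_{k=0}^{n}\tilde w_k\le s_0<\infty$ for every $n$.
\item Finally, $w_k=P_{k+1}\tilde w_k\le P_\infty\tilde w_k$, so $\sum_k w_k\le P_\infty s_0<\infty$.
\end{itemize}

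I do not expect a genuine obstacle. The only point needing care is Step 1: one must show that the infinite product stays bounded, via the logarithm estimate, so that dividing by and multiplying back by $P_k$ preserves both convergence and summability.
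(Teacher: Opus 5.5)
Your proof is correct and complete. Normalizing by $P_k=\prod_{j<k}(1+a_j)\le\exp\bigl(\sum_j a_j\bigr)$ turns the hypothesis into $\tilde v_{k+1}\le\tilde v_k-\tilde w_k+\tilde b_k$ with $\sum_k\tilde b_k\le\sum_k b_k$. The nonincreasing quantity $s_k=\tilde v_k+\sum_{j\ge k}\tilde b_j$ then gives both conclusions at once, together with the explicit bound $\sum_k w_k\le P_\infty\bigl(v_0+\sum_k b_k\bigr)$. Undoing the scaling is valid because $1\le P_k\uparrow P_\infty<\infty$.

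The paper does not prove this lemma; it imports it from Bauschke--Combettes, so there is no in-paper argument to match. Your product estimate in Step~1 is the same one the paper uses in Lemma~\ref{lem31}(i) to bound $\lambda_k$.

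The other standard route is a two-pass argument. First drop $-w_k$ and unroll the recursion to get $\sup_k v_k\le P_\infty\bigl(v_0+\sum_k b_k\bigr)=:V$. Then rewrite the hypothesis as $v_{k+1}\le v_k-w_k+(a_kV+b_k)$, which reduces to the case $a_k\equiv0$. Summing gives $\sum_k w_k<\infty$. For convergence, take $\limsup_m$ in $v_m\le v_n+\sum_{k\ge n}(a_kV+b_k)$ and then $\liminf_n$.

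Your normalization needs no a priori boundedness step and absorbs the multiplicative factor in a single transformation. The two-pass route instead works directly with the original sequence.
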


\begin{lemma}[\citealp{bauschke2017convex}, Lemma~5.5]
	\label{lem:Fejer}
	Let $S\subseteq\mathbb{R}^m$ be nonempty, and let
	$\{x_k\}\subseteq\mathbb{R}^m$ be Fej\'er monotone with respect to $S$,
	that is,
	\begin{equation*}
		\|x_{k+1}-x\|\le\|x_k-x\|,
		\qquad
		\forall x\in S,\quad k\ge0.
	\end{equation*}
	If every accumulation point of $\{x_k\}$ belongs to $S$, then
	$\{x_k\}$ converges to a point in $S$.
\end{lemma}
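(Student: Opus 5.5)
Lemma~\ref{lem:Fejer} is the standard Fej\'er-monotonicity result; I would prove it in two steps: first show that the sequence has at most one accumulation point, then show that it has at least one. Throughout I take ``converges'' to mean convergence of the whole sequence, and I use only the hypothesis that every accumulation point lies in $S$.

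\emph{Boundedness and convergence of distances.} Fix any $x\in S$, which exists because $S$ is nonempty. Fej\'er monotonicity says that $\{\|x_k-x\|\}$ is nonincreasing. It is also bounded below by $0$, so it converges to some limit $\ell(x)\ge0$. In particular $\|x_k-x\|\le\|x_0-x\|$ for all $k$. Hence $\{x_k\}$ is bounded, and by Bolzano--Weierstrass it has at least one accumulation point in $\mathbb{R}^m$. By hypothesis, every such point lies in $S$.

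\emph{Uniqueness of the accumulation point.} Let $y,z$ be two accumulation points. Both belong to $S$, so $\|x_k-y\|\to\ell(y)$ and $\|x_k-z\|\to\ell(z)$. Expanding the squares gives the identity
\[
2\langle x_k, z-y\rangle=\|x_k-y\|^2-\|x_k-z\|^2+\|z\|^2-\|y\|^2 .
\]
The right-hand side converges, so $\langle x_k,z-y\rangle$ converges to some limit $c$.

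Now pass to a subsequence $x_{k_j}\to y$; this gives $c=\langle y,z-y\rangle$. Passing instead to a subsequence converging to $z$ gives $c=\langle z,z-y\rangle$. Subtracting the two expressions yields $\|z-y\|^2=0$, so $y=z$.

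\emph{Conclusion.} A bounded sequence in $\mathbb{R}^m$ whose only accumulation point is $y$ must converge to $y$. Otherwise some subsequence would stay at distance at least $\varepsilon>0$ from $y$. That subsequence is bounded, so it has a further convergent subsequence, whose limit is an accumulation point different from $y$, a contradiction. Therefore $x_k\to y\in S$.

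The only step with real content is the uniqueness argument. Its key point is that the limits $\ell(\cdot)$ exist at every point of $S$, not just along subsequences, and this is exactly what allows the inner-product limit $c$ to be evaluated along two different subsequences.
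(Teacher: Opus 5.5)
Your proof is correct and is essentially the standard argument behind the result the paper cites without proof (Bauschke--Combettes, stated there for weak cluster points in a Hilbert space): Fej\'er monotonicity gives convergence of $\|x_k-x\|$ for every $x\in S$, and the identity for $2\langle x_k,z-y\rangle$ rules out two distinct cluster points. In $\mathbb{R}^m$ you could even skip the uniqueness step, because any accumulation point $y$ lies in $S$, so $\|x_k-y\|$ is nonincreasing and has a subsequence tending to $0$, hence $x_k\to y$.
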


\begin{lemma}[Robbins--Siegmund; see \citealp{polyak1987intro}]
	\label{lem:rs}
	Let $\{v_k\}$, $\{z_k\}$, $\{a_k\}$, and $\{b_k\}$ be
	nonnegative random variables adapted to a filtration
	$\{\mathcal F_k\}$. Suppose that
	\begin{equation*}
		\mathbb E[v_{k+1}\mid\mathcal F_k]
		\le(1+a_k)v_k-z_k+b_k,
		\qquad k\ge0,
	\end{equation*}
	almost surely, and that
	\begin{equation*}
		\sum_{k=0}^{\infty}a_k<\infty,
		\qquad
		\sum_{k=0}^{\infty}b_k<\infty
		\qquad\text{almost surely}.
	\end{equation*}
	Then $v_k$ converges almost surely to a finite nonnegative random variable,
	and
	\begin{equation*}
		\sum_{k=0}^{\infty}z_k<\infty
		\qquad\text{almost surely}.
	\end{equation*}
\end{lemma}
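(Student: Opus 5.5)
We follow the classical Robbins--Siegmund argument: normalize away the multiplicative factor $(1+a_k)$, turn the recursion into a supermartingale that is bounded below on a suitable event, and apply the martingale convergence theorem. Throughout, $a_k,b_k,z_k,v_k$ are $\mathcal F_k$-measurable and nonnegative. Conditional expectations of nonnegative variables are understood in the extended sense, so integrability is not an issue after the localization below.

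\emph{Step 1 (normalization).} Set $\pi_0:=1$ and $\pi_k:=\prod_{j=0}^{k-1}(1+a_j)$, which is $\mathcal F_{k-1}$-measurable. Define
\begin{equation*}
v_k':=\frac{v_k}{\pi_k},\qquad b_k':=\frac{b_k}{\pi_{k+1}},\qquad z_k':=\frac{z_k}{\pi_{k+1}}.
\end{equation*}
Dividing the hypothesis by $\pi_{k+1}$, which is $\mathcal F_k$-measurable, gives
\begin{equation*}
\mathbb E[v_{k+1}'\mid\mathcal F_k]\le v_k'-z_k'+b_k'.
\end{equation*}
Since $\sum_k a_k<\infty$ a.s., $\pi_k\uparrow\pi_\infty\in[1,\infty)$ a.s. Hence $\sum_k b_k'\le\sum_k b_k<\infty$ a.s.

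\emph{Step 2 (supermartingale and localization).} Let
\begin{equation*}
Y_k:=v_k'+\sum_{j<k}z_j'-\sum_{j<k}b_j'.
\end{equation*}
Step 1 gives $\mathbb E[Y_{k+1}\mid\mathcal F_k]\le Y_k$. This process is not bounded below, so we localize. For $n\in\mathbb N$ define the stopping time
\begin{equation*}
\tau_n:=\inf\Bigl\{k:\ \sum_{j\le k}b_j'>n\Bigr\},
\end{equation*}
which is a stopping time because $\sum_{j\le k}b_j'$ is $\mathcal F_k$-measurable. For $k\le\tau_n$ we have $\sum_{j<k}b_j'\le n$, so $Y_{k\wedge\tau_n}\ge -n$. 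Thus $Y_{k\wedge\tau_n}+n$ is a nonnegative supermartingale and converges a.s. to a finite limit. On $\{\sum_j b_j'\le n\}$ we have $\tau_n=\infty$, so $Y_k$ converges there. Taking the union over $n$, and using $\sum_j b_j'<\infty$ a.s., $Y_k$ converges a.s. to a finite limit.

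\emph{Step 3 (unpacking).} Because $\sum_{j<k}b_j'$ converges a.s., the quantity $v_k'+\sum_{j<k}z_j'$ converges a.s. to a finite limit. Both summands are nonnegative, so $\sum_{j<k}z_j'$ is nondecreasing and bounded, hence convergent. It follows that $v_k'$ converges a.s. Finally:
\begin{itemize}[label=$\bullet$]
\item $v_k=\pi_k v_k'$ converges a.s. to a finite nonnegative limit, since $\pi_k\to\pi_\infty<\infty$;
\item $\sum_k z_k\le\pi_\infty\sum_k z_k'<\infty$ a.s.
\end{itemize}

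The only delicate point is Step 2. The supermartingale $Y_k$ lacks a deterministic lower bound because $\sum_k b_k$ is finite only almost surely, not uniformly. The stopping-time localization handles this, together with the observation that $Y_{k\wedge\tau_n}\ge-n$ holds precisely because the stopping index is defined through partial sums up to $k$. Everything else is bookkeeping.
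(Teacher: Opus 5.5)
Your proof is correct and is the classical Robbins--Siegmund argument: normalize by $\prod_{j<k}(1+a_j)$, form the compensated supermartingale $Y_k$, stop it when the partial sums of $b_j'$ exceed $n$, and apply supermartingale convergence. The paper does not prove this lemma but cites it, so this standard argument is what it relies on. The one point to tighten is that $\tau_n$ gives the lower bound but not integrability of $v_0$. Either invoke the convergence theorem for generalized nonnegative supermartingales, or also restrict to the $\mathcal F_0$-events $\{v_0\le m\}$ and take the union over $m$.
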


\begin{lemma}[\citealp{polyak1987intro}, Chapter~2, Lemma~6]
	\label{lem25}
	Let $\{v_k\}$ and $\{a_k\}$ be nonnegative scalar sequences, and let
	$q>0$. Suppose that
	\begin{equation*}
		v_{k+1}\le v_k-a_kv_k^{1+q},
		\qquad k\ge0.
	\end{equation*}
	Then
	\begin{equation*}
		v_k\le\frac{v_0}{\left(1+qv_0^q\sum_{i=0}^{k-1}a_i\right)^{1/q}}.
	\end{equation*}
	In particular, if $a_k=a>0$ for all $k$, then
	\begin{equation*}
		v_k\le\frac{v_0}{\left(1+aqv_0^q k\right)^{1/q}}.
	\end{equation*}
\end{lemma}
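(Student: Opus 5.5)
We prove Lemma~\ref{lem25} by passing to the transformed quantity $v_k^{-q}$, which turns the nonlinear decrease into a linear increment that can be summed. First, the trivial cases. If $v_k=0$ for some $k$, then $v_{k+1}\le 0$, so $v_j=0$ for all $j\ge k$. Moreover, $v_{k+1}\le v_k$ always, so the sequence is nonincreasing. Consequently, once some $v_j$ vanishes the bound holds trivially from that index on; the case $v_0=0$ is immediate. We may therefore assume $v_0,\dots,v_k>0$ and restrict attention to indices where all terms are positive.

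The key step is the one-step estimate
\begin{equation*}
	v_{k+1}^{-q}\ \ge\ v_k^{-q}+q\,a_k,
\end{equation*}
valid whenever $v_{k+1}>0$. To obtain it, write $v_{k+1}\le v_k(1-a_kv_k^q)$. Positivity of $v_{k+1}$ forces $t:=a_kv_k^q\in[0,1)$. Raising to the power $-q$, which is order reversing on positive reals, gives
\begin{equation*}
	v_{k+1}^{-q}\ge v_k^{-q}(1-t)^{-q}.
\end{equation*}
By Bernoulli's inequality (equivalently, convexity of $x\mapsto(1-x)^{-q}$ on $[0,1)$ with derivative $q$ at $0$), $(1-t)^{-q}\ge 1+qt$. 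Hence
\begin{equation*}
	v_{k+1}^{-q}\ge v_k^{-q}+q\,t\,v_k^{-q}=v_k^{-q}+q\,a_k,
\end{equation*}
as claimed.

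Summing this estimate telescopically from $0$ to $k-1$ yields
\begin{equation*}
	v_k^{-q}\ge v_0^{-q}+q\sum_{i=0}^{k-1}a_i=v_0^{-q}\Bigl(1+qv_0^q\sum_{i=0}^{k-1}a_i\Bigr).
\end{equation*}
Inverting and taking the $1/q$ power gives exactly the stated bound. The constant-$a$ case follows directly by substituting $\sum_{i=0}^{k-1}a_i=ak$.

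The only real obstacle is the one-step inequality, and specifically the verification that $t<1$ so that the power step is legitimate; positivity of $v_{k+1}$ handles this. The zero case must be treated separately because negative powers are undefined there.
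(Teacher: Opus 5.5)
Your proof is correct: the substitution $v_k^{-q}$, the convexity bound $(1-t)^{-q}\ge 1+qt$ for $t\in[0,1)$ (with $t<1$ justified by $0<v_{k+1}\le v_k(1-t)$), the telescoping sum, and the separate handling of a vanishing term via monotonicity are all sound. The paper does not prove this lemma itself but cites Polyak, and your argument is essentially the standard proof of that result, so there is nothing to add.
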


\section{Parameter-Free Adaptive Extragradient Method}\label{sec:pf-aeg}

In this section, we introduce PF-AEG, a parameter-free adaptive extragradient
method for VIs with $\alpha$-symmetric operators. The method
uses an explicit stepsize rule based solely on quantities generated by the
iterations and requires neither prior knowledge of the generalized-smoothness
parameters $L_0,L_1,\alpha$ or the quasi-sharpness parameters $p,\mu$, nor
a line search.

\subsection{Algorithmic Framework}
For an $\alpha$-symmetric operator, the regularity depends on its magnitude and can vary along the iterates. This motivates adapting the stepsize using the observed operator change
$\|F(z_k)-F(t_k)\|$ relative to the iterate displacement
$\|z_k-t_k\|$, together with the operator magnitude. Moreover, a stepsize
reduced due to a large local operator change may become unnecessarily
conservative at subsequent iterates. The update therefore allows the stepsize
to recover gradually  rather than remain constrained by earlier unfavorable local behavior.

\begin{algorithm}[ht]
	\caption{Parameter-Free Adaptive Extragradient Method (PF-AEG)}
	\label{alg3}
	
	\textbf{Initialization:}
	Choose $z_0\in U$, $\lambda_0>0$, and set $k=0$.
	
	\vspace{1mm}
	\textit{Step 1:}
	Compute
	\begin{equation*}
		t_k=P_U\!\left(z_k-\lambda_k F(z_k)\right),
	\end{equation*}
	\begin{equation*}
		z_{k+1}=P_U\!\left(z_k-\lambda_k F(t_k)\right).
	\end{equation*}
	
	\textit{Step 2:}
	Set
	\begin{equation*}
		\eta_k=\frac{1}{(k+1)^2},
	\end{equation*}
	and update
	\begin{equation}\label{eq:pf-aeg-stepsize}
		\lambda_{k+1}=\min\left\{
		(1+\eta_k)\lambda_k,\,
		\frac{\|z_k-t_k\|^2}
		{2\lambda_k\|F(z_k)-F(t_k)\|^2},\,
		\frac{1}{\|F(z_{k+1})\|^2}
		\right\}.
	\end{equation}
	
	\textit{Step 3:}
	Set $k\gets k+1$ and return to Step~1.
\end{algorithm}

PF-AEG incorporates these considerations into a three-way adaptive stepsize
rule, combined with the extragradient step. The complete algorithm is given in Algorithm~\ref{alg3}. Throughout, a candidate stepsize with a zero denominator is interpreted as $+\infty$.

The three terms in \eqref{eq:pf-aeg-stepsize} play complementary
roles. The first term, $(1+\eta_k)\lambda_k$, controls the increase of the stepsize between successive iterations while allowing gradual recovery after a previous reduction. Thus,
the method need not remain unnecessarily conservative after passing through a
region of large local operator variation.

The second term, $\frac{\|z_k-t_k\|^2}{2\lambda_k\|F(z_k)-F(t_k)\|^2}$, adapts to the operator variation observed in the current
extragradient step. Since $\lambda_{k+1}$ is no larger than this term,
we have
\begin{equation}\label{eq:local-difference-control}
	\lambda_{k+1}\lambda_k\|F(z_k)-F(t_k)\|^2
	\le\frac{1}{2}\|z_k-t_k\|^2.
\end{equation}
This inequality provides the key control of the operator-difference term in
the descent analysis and is enforced directly from the computed iterates and
operator evaluations.

The third term in \eqref{eq:pf-aeg-stepsize} controls the
operator-norm-dependent growth permitted by generalized smoothness. In fact,
for every $k\ge0$,
$\lambda_{k+1}\le 1/\|F(z_{k+1})\|^2$, and hence
\begin{equation}\label{eq:operator-norm-control}
	\lambda_k\|F(z_k)\|^2\le1,
	\qquad k\ge1.
\end{equation}
Consequently, for every $\alpha\in(0,1]$,
\begin{equation*}
	\lambda_k\|F(z_k)\|^\alpha=
	\lambda_k^{1-\alpha/2}
	\bigl(\lambda_k\|F(z_k)\|^2\bigr)^{\alpha/2}
	\le\bar{\lambda}^{\,1-\alpha/2},
	\qquad k\ge1,
\end{equation*}
where $\bar{\lambda}$ is the uniform upper bound established in
Lemma~\ref{lem31}. Thus, after the first iteration, the quadratic
normalization controls $\lambda_k\|F(z_k)\|^\alpha$ for every
$\alpha\in(0,1]$ without requiring the value of $\alpha$.

In summary, the three components of the stepsize rule provide, respectively,
controlled stepsize recovery, adaptation to local operator variation, and
control of operator-norm growth. Since all quantities in
\eqref{eq:pf-aeg-stepsize} are generated during the iterations, PF-AEG
requires no prior knowledge of $L_0,L_1,\alpha,p$, or $\mu$, and performs
no line search.

\subsection{Convergence Analysis}
\label{subsec:pf-aeg-convergence}

We now analyze the convergence of PF-AEG under the $\alpha$-symmetric smoothness and $p$-quasi-sharpness conditions. We first establish basic properties of the adaptive stepsizes and a one-step extragradient estimate. These results are then used to prove a quasi-Fej\'er descent inequality, a positive lower bound on the stepsizes, full-sequence convergence, and explicit last-iterate rates.

\begin{lemma}\label{lem31}
	Let $\{\lambda_k\}$ be the stepsize sequence generated by Algorithm~\ref{alg3}. Then the following statements hold.
	\begin{enumerate}
		\item[$\mathbf{(i)}$] The sequence is uniformly bounded above:
		\begin{equation}\label{eq:lambda-upper}
			0<\lambda_k\le \bar\lambda
			:=\lambda_0\exp\!\left(\sum_{j=0}^{\infty}\eta_j\right),
			\qquad k\ge0.
		\end{equation}
		
		\item[$\mathbf{(ii)}$] The stepsize sequence converges:
		\begin{equation}\label{eq:lambda-limit}
			\lambda_k\to\lambda_\infty
			\qquad\text{for some }\lambda_\infty\ge0.
		\end{equation}
		
		\item[$\mathbf{(iii)}$] The positive and negative variations are summable:
		\begin{equation}\label{eq:lambda-variation}
			\sum_{k=0}^{\infty}(\lambda_{k+1}-\lambda_k)_+<\infty,
			\qquad
			\sum_{k=0}^{\infty}(\lambda_{k+1}-\lambda_k)_-<\infty,
		\end{equation}
		where
		\begin{equation*}
			(a)_+:=\max\{a,0\},
			\qquad
			(a)_-:=\max\{-a,0\}.
		\end{equation*}
		Consequently,
		\begin{equation}\label{eq:lambda-total-variation}
			\sum_{k=0}^{\infty}|\lambda_{k+1}-\lambda_k|<\infty.
		\end{equation}
	\end{enumerate}
\end{lemma}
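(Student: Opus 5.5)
Part (i) follows by induction from the first term of the minimum in \eqref{eq:pf-aeg-stepsize}. We first check positivity. The initial stepsize $\lambda_0>0$ by assumption. If $\lambda_k>0$, then each of the three candidates for $\lambda_{k+1}$ is positive or $+\infty$ (by the zero-denominator convention). The first candidate $(1+\eta_k)\lambda_k$ is finite, so $\lambda_{k+1}$ is a finite positive number. For the upper bound, note that $\lambda_{k+1}\le(1+\eta_k)\lambda_k$. Iterating gives
\begin{equation*}
\lambda_k\le\lambda_0\prod_{j=0}^{k-1}(1+\eta_j).
\end{equation*}
Using $1+x\le e^x$, this is at most $\lambda_0\exp\bigl(\sum_{j=0}^{\infty}\eta_j\bigr)=\bar\lambda$. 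This bound is finite because $\sum_j\eta_j=\sum_j(k+1)^{-2}=\pi^2/6$.

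For (ii) and (iii), the plan is to view $\{\lambda_k\}$ as a quasi-decreasing sequence and apply Lemma~\ref{lem23}. Set $v_k=\lambda_k\ge0$ and $a_k=\eta_k$, and take $b_k=0$. Define
\begin{equation*}
w_k=(1+\eta_k)\lambda_k-\lambda_{k+1},
\end{equation*}
which is nonnegative by the first term of the minimum. Then $v_{k+1}=(1+a_k)v_k-w_k$, and $\sum_k a_k<\infty$. Lemma~\ref{lem23} therefore gives convergence of $\lambda_k$ to some $\lambda_\infty\ge0$, which is (ii). It also gives $\sum_k w_k<\infty$.

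To obtain (iii), we first bound the positive part:
\begin{equation*}
(\lambda_{k+1}-\lambda_k)_+\le\eta_k\lambda_k\le\eta_k\bar\lambda,
\end{equation*}
so $\sum_k(\lambda_{k+1}-\lambda_k)_+\le\bar\lambda\sum_k\eta_k<\infty$. For the negative part, $(\lambda_{k+1}-\lambda_k)_-=(\lambda_k-\lambda_{k+1})_+$, and since $\lambda_k-\lambda_{k+1}=w_k-\eta_k\lambda_k\le w_k$, this is at most $w_k$, which is summable. Alternatively, we can avoid Lemma~\ref{lem23} entirely. The telescoping identity
\begin{equation*}
\sum_{k=0}^{n-1}(\lambda_{k+1}-\lambda_k)_- = \sum_{k=0}^{n-1}(\lambda_{k+1}-\lambda_k)_+ + \lambda_0-\lambda_n
\end{equation*}
bounds the negative variation by $\bar\lambda\pi^2/6+\lambda_0$. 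Summability of both parts then gives absolute summability of $\lambda_{k+1}-\lambda_k$, which in turn yields convergence, so (ii) follows as well. Finally, \eqref{eq:lambda-total-variation} follows from $|a|=(a)_++(a)_-$.

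There is no serious obstacle in this argument. The only points needing care are that $\lambda_{k+1}$ is well defined and positive under the $+\infty$ convention, and that the limit $\lambda_\infty$ may be zero at this stage. Ruling out $\lambda_\infty=0$ is deferred to the later lower-bound lemma.
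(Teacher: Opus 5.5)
Your proposal is correct. Part (i) matches the paper's argument. Your explicit induction for $\lambda_k>0$ supplies a point the paper asserts without comment. For the second candidate, you should add one line: a nonzero denominator forces $F(z_k)\neq F(t_k)$, hence $z_k\neq t_k$, so that candidate is strictly positive rather than merely nonnegative.

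For (ii)--(iii), your primary route differs from the paper's. You apply Lemma~\ref{lem23} with $v_k=\lambda_k$, $a_k=\eta_k$, $b_k=0$ and $w_k=(1+\eta_k)\lambda_k-\lambda_{k+1}\ge0$. This gives convergence of $\lambda_k$ and $\sum_k w_k<\infty$ at once, and you then dominate the negative variation by $w_k$. The paper instead argues exactly as in your ``alternative''. It bounds $(\lambda_{k+1}-\lambda_k)_+\le\bar\lambda\eta_k$, controls the negative variation by telescoping together with $\lambda_{N+1}\ge0$, and then obtains (ii) from finite total variation via the Cauchy criterion.

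Your Lemma~\ref{lem23} route has a useful interpretation: the total amount by which the second and third candidates cut the stepsize below its permitted growth $(1+\eta_k)\lambda_k$ is finite. However, it invokes a more general quasi-Fej\'er-type lemma than the situation needs. The paper's argument is entirely elementary and self-contained.
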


\begin{proof}
	By the first term in the stepsize rule \eqref{eq:pf-aeg-stepsize}, $\lambda_{k+1}\le(1+\eta_k)\lambda_k$. 	Iterating this inequality and using $\log(1+x)\le x$ for $x\ge0$, we obtain
	\begin{align*}
		\lambda_{k+1}
		&\le \lambda_0\prod_{j=0}^{k}(1+\eta_j)
		\le \lambda_0\exp\!\left(\sum_{j=0}^{k}\eta_j\right)
		\le \lambda_0\exp\!\left(\sum_{j=0}^{\infty}\eta_j\right)
		=\bar\lambda,
	\end{align*}
	which proves $\mathbf{(i)}$.
	
	We next prove $\mathbf{(iii)}$. The stepsize rule \eqref{eq:pf-aeg-stepsize} gives
	$\lambda_{k+1}-\lambda_k\le\eta_k\lambda_k\le\bar\lambda\eta_k$. Hence
	\begin{equation*}
		(\lambda_{k+1}-\lambda_k)_+
		\le \bar\lambda\eta_k.
	\end{equation*}
	Since $\sum_{k=0}^{\infty}\eta_k<\infty$, it follows that
	\begin{equation*}
		\sum_{k=0}^{\infty}(\lambda_{k+1}-\lambda_k)_+<\infty.
	\end{equation*}
	Moreover, for every $N\ge0$,
	\begin{align*}
		\lambda_{N+1}-\lambda_0
		&=\sum_{k=0}^{N}(\lambda_{k+1}-\lambda_k)_+
		-\sum_{k=0}^{N}(\lambda_{k+1}-\lambda_k)_-.
	\end{align*}
	Since $\lambda_{N+1}\ge0$,
	\begin{equation*}
		\sum_{k=0}^{N}(\lambda_{k+1}-\lambda_k)_-
		\le
		\lambda_0+\sum_{k=0}^{N}(\lambda_{k+1}-\lambda_k)_+.
	\end{equation*}
	Letting $N\to\infty$ proves the second assertion in \eqref{eq:lambda-variation}, and therefore \eqref{eq:lambda-total-variation} follows.
	
	Finally, \eqref{eq:lambda-total-variation} implies that $\{\lambda_k\}$ is a Cauchy sequence in $\mathbb{R}$. Thus it converges to a finite limit $\lambda_\infty\ge0$, proving $\mathbf{(ii)}$.
\end{proof}

\begin{remark}
	It is easy to see that Lemma~\ref{lem31} applies  whenever $\{\eta_k\}$ is summable, which in turn guarantees the full-sequence convergence and explicit last-iterate rates established below. To simplify the analysis, however, we set $\eta_k=\frac{1}{(k+1)^2}$ in PF-AEG, which also performs well in our experiments.
\end{remark}

\begin{lemma}\label{lem32}
	Let $\{t_k\}$ and $\{z_k\}$ be generated by Algorithm~\ref{alg3}. Then, for every $y\in U$ and every $k\ge0$,
	\begin{equation}\label{4.1}
		\|z_{k+1}-y\|^2\le
		\|z_k-y\|^2-\|z_k-t_k\|^2
		-2\lambda_k\langle F(t_k),t_k-y\rangle
		+\lambda_k^2\|F(z_k)-F(t_k)\|^2.
	\end{equation}
\end{lemma}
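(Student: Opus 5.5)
This is the standard one-step extragradient estimate. The plan is to combine the projection inequality \eqref{2.2} for $z_{k+1}$ with the variational characterization \eqref{2.1} for $t_k$.

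First, apply \eqref{2.2} with $v=z_k-\lambda_kF(t_k)$ and $u=y\in U$, so that $P_U(v)=z_{k+1}$. Expanding $\|y-v\|^2$ and $\|v-z_{k+1}\|^2$ and cancelling the common $\lambda_k^2\|F(t_k)\|^2$ terms gives
\begin{equation*}
\|z_{k+1}-y\|^2\le\|z_k-y\|^2-\|z_k-z_{k+1}\|^2-2\lambda_k\langle F(t_k),z_{k+1}-y\rangle .
\end{equation*}
Next, split $z_{k+1}-y=(z_{k+1}-t_k)+(t_k-y)$; this isolates the target term $-2\lambda_k\langle F(t_k),t_k-y\rangle$. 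Also write $\|z_k-z_{k+1}\|^2=\|z_k-t_k\|^2+\|t_k-z_{k+1}\|^2+2\langle z_k-t_k,t_k-z_{k+1}\rangle$. The remaining quantity to control is
\begin{equation*}
-\|t_k-z_{k+1}\|^2-2\langle z_k-t_k,t_k-z_{k+1}\rangle-2\lambda_k\langle F(t_k),z_{k+1}-t_k\rangle .
\end{equation*}

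For this term, use \eqref{2.1} for $t_k=P_U(z_k-\lambda_kF(z_k))$ with $u=z_{k+1}\in U$. This gives $\langle z_k-\lambda_kF(z_k)-t_k,\,z_{k+1}-t_k\rangle\le0$, that is,
\begin{equation*}
2\langle z_k-t_k,z_{k+1}-t_k\rangle\le 2\lambda_k\langle F(z_k),z_{k+1}-t_k\rangle .
\end{equation*}
Substituting, the remaining quantity is at most $-\|t_k-z_{k+1}\|^2+2\lambda_k\langle F(z_k)-F(t_k),z_{k+1}-t_k\rangle$. Young's inequality bounds this by $\lambda_k^2\|F(z_k)-F(t_k)\|^2$, which yields \eqref{4.1}.

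The only delicate point is bookkeeping the signs in the polarization step so that the $\|t_k-z_{k+1}\|^2$ term exactly absorbs the Young bound with constant $1$. No smoothness or stepsize property is needed, since \eqref{4.1} is purely geometric.
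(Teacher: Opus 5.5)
Your proposal is correct and follows essentially the same route as the paper: it applies the projection inequality \eqref{2.2} for $z_{k+1}$, expands $\|z_{k+1}-z_k\|^2$ through $t_k$, and uses \eqref{2.1} for $t_k$ tested at $u=z_{k+1}\in U$. Young's inequality with unit weight then exactly absorbs the $\|z_{k+1}-t_k\|^2$ term, and your sign bookkeeping checks out, as does your remark that no smoothness or stepsize property is needed.
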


\begin{proof}
	Since
	\begin{equation*}
		z_{k+1}=P_U\bigl(z_k-\lambda_kF(t_k)\bigr),
	\end{equation*}
	the projection inequality \eqref{2.2} gives, for every $y\in U$,
	\begin{align}
		\|z_{k+1}-y\|^2
		&\le\|z_k-\lambda_kF(t_k)-y\|^2
		-\|z_k-\lambda_kF(t_k)-z_{k+1}\|^2 \notag\\
		&=\|z_k-y\|^2-\|z_{k+1}-z_k\|^2
		+2\lambda_k\langle F(t_k),y-z_{k+1}\rangle.
		\label{4.3}
	\end{align}
	Using
	\begin{equation*}
		\|z_{k+1}-z_k\|^2=\|z_{k+1}-t_k\|^2
		+\|z_k-t_k\|^2-2\langle z_{k+1}-t_k,z_k-t_k\rangle,
	\end{equation*}
	we can rewrite \eqref{4.3} as
	\begin{align}
		\|z_{k+1}-y\|^2
		&\le\|z_k-y\|^2-\|z_{k+1}-t_k\|^2-\|z_k-t_k\|^2
		-2\lambda_k\langle F(t_k),t_k-y\rangle \notag\\
		&\quad
		+2\langle z_{k+1}-t_k,
		z_k-\lambda_kF(z_k)-t_k\rangle
		+2\lambda_k\langle F(z_k)-F(t_k),z_{k+1}-t_k\rangle.
		\label{4.5}
	\end{align}
	Because $t_k=P_U\bigl(z_k-\lambda_kF(z_k)\bigr)$,
	the projection optimality condition \eqref{2.1}, with $z_{k+1}\in U$, implies
	\begin{equation*}
		\langle z_{k+1}-t_k,z_k-\lambda_kF(z_k)-t_k\rangle\le0.
	\end{equation*}
	In addition, the Cauchy--Schwarz and Young inequalities yield
	\begin{equation*}
		2\lambda_k\langle F(z_k)-F(t_k),z_{k+1}-t_k\rangle
		\le
		\lambda_k^2\|F(z_k)-F(t_k)\|^2
		+\|z_{k+1}-t_k\|^2.
	\end{equation*}
	Substituting these two estimates into \eqref{4.5} gives \eqref{4.1}.
\end{proof}

\begin{theorem}\label{th61}
	Suppose that Assumptions~\ref{ass1}--\ref{ass3} hold, and let
	$\{t_k\}$ and $\{z_k\}$ be generated by Algorithm~\ref{alg3}. Then:
	\begin{enumerate}
		\item[$\mathbf{(i)}$]
		For every $t^*\in U^*$ and every $k\ge1$,
		\begin{equation}\label{eq:descent}
			\|z_{k+1}-t^*\|^2\le\|z_k-t^*\|^2-\frac12\|z_k-t_k\|^2
			-2\mu\lambda_k\operatorname{dist}^p(t_k,U^*)
			+M(\lambda_k-\lambda_{k+1})_+,
		\end{equation}
		where
		\begin{equation*}
			M=
			\begin{cases}
				\displaystyle
				\left(
				\bar{\lambda}K_0
				+\bar{\lambda}^{\frac{2-\alpha}{2}}K_1
				+\bar{\lambda}^{\frac{2-\alpha}{2(1-\alpha)}}K_2
				\right)^2,
				& \alpha\in(0,1),\\[3mm]
				\displaystyle
				\left(
				\bar{\lambda}L_0+\sqrt{\bar{\lambda}}\,L_1
				\right)^2
				e^{2L_1\sqrt{\bar{\lambda}}},
				& \alpha=1.
			\end{cases}
		\end{equation*}
		
		\item[$\mathbf{(ii)}$]
		There exists $C>0$ such that $\|F(z_k)\|\le C$ for all $k\ge0$.
		Moreover,
		\begin{equation}\label{eq:lambda-lower}
			\lambda_k\ge\underline{\lambda}>0,
			\qquad k\ge0,
		\end{equation}
		where
		\begin{equation*}
			\underline{\lambda}
			=
			\begin{cases}
				\displaystyle
				\min\left\{
				\lambda_0,\,
				\lambda_1,\,
				\frac{1}{
					2\bar{\lambda}
					\left(
					K_0+K_1C^\alpha
					+K_2\bar{\lambda}^{\frac{\alpha}{2(1-\alpha)}}
					\right)^2
				},\,
				\frac{1}{C^2}
				\right\},
				& \alpha\in(0,1),\\[4mm]
				\displaystyle
				\min\left\{
				\lambda_0,\,
				\lambda_1,\,
				\frac{1}{
					2\bar{\lambda}(L_0+L_1C)^2
					e^{2L_1\sqrt{\bar{\lambda}}}
				},\,
				\frac{1}{C^2}
				\right\},
				& \alpha=1.
			\end{cases}
		\end{equation*}
		
		\item[$\mathbf{(iii)}$]
		Both $\{z_k\}$ and $\{t_k\}$ converge to a common solution
		$\bar t\in U^*$.
	\end{enumerate}
\end{theorem}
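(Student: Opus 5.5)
The plan is to derive (i) from Lemma~\ref{lem32}, then use (i) together with Lemma~\ref{lem23} to get boundedness and, by induction, the stepsize lower bound (ii). Finally, (iii) follows from the summability conclusions of Lemma~\ref{lem23} combined with an Opial-type argument.

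\emph{Step (i).} Apply Lemma~\ref{lem32} with $y=t^*\in U^*$. By Assumption~\ref{ass3}, $-2\lambda_k\langle F(t_k),t_k-t^*\rangle\le -2\mu\lambda_k\operatorname{dist}^p(t_k,U^*)$. For the error term, split
\[
\lambda_k^2=\lambda_k\lambda_{k+1}+\lambda_k(\lambda_k-\lambda_{k+1}).
\]
The first piece is bounded by $\tfrac12\|z_k-t_k\|^2$ using \eqref{eq:local-difference-control}. For the second piece it suffices to show $\lambda_k\|F(z_k)-F(t_k)\|^2\le M$.

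For this, use nonexpansiveness \eqref{2.3} with $z_k=P_U(z_k)$:
\[
\|z_k-t_k\|\le\lambda_k\|F(z_k)\|\le\sqrt{\lambda_k}\le\sqrt{\bar\lambda}\qquad(k\ge1),
\]
where the middle inequality is \eqref{eq:operator-norm-control}. Then apply Proposition~\ref{pro3.1} with $y'=z_k$ and $y=t_k$.
\begin{itemize}
\item For $\alpha=1$ this gives $\sqrt{\lambda_k}\,\|F(z_k)-F(t_k)\|\le \lambda_k(L_0+L_1\|F(z_k)\|)e^{L_1\sqrt{\bar\lambda}}$. Using $\lambda_kL_0\le\bar\lambda L_0$ and $\lambda_kL_1\|F(z_k)\|\le\sqrt{\lambda_k}L_1\le\sqrt{\bar\lambda}L_1$, we obtain exactly $M$.
\item For $\alpha\in(0,1)$ the same route applies. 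Now $\lambda_k K_1\|F(z_k)\|^\alpha\le\bar\lambda^{(2-\alpha)/2}K_1$, and $\lambda_kK_2\|z_k-t_k\|^{\alpha/(1-\alpha)}\le \bar\lambda^{1+\alpha/(2(1-\alpha))}K_2$. This reproduces the stated exponents up to bookkeeping.
\end{itemize}

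\emph{Step (ii).} Inequality \eqref{eq:descent} has the form required by Lemma~\ref{lem23} with $a_k=0$ and $b_k=M(\lambda_k-\lambda_{k+1})_+$, which is summable by Lemma~\ref{lem31}(iii). Hence $\|z_k-t^*\|$ converges, and in particular $\{z_k\}$ is bounded. Since $F$ is continuous on the compact set $\overline{\{z_k\}}\subseteq U$, there is a $C$ with $\|F(z_k)\|\le C$ for all $k$.

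The lower bound is then proved by induction for $k\ge1$, with $\lambda_0,\lambda_1$ placed in the minimum as the base cases. Each of the three candidates in \eqref{eq:pf-aeg-stepsize} is bounded below:
\begin{itemize}
\item the first is at least $\lambda_k\ge\underline\lambda$ by the induction hypothesis;
\item the third is at least $1/C^2$;
\item for the second, Proposition~\ref{pro3.1} together with $\|z_k-t_k\|\le\sqrt{\bar\lambda}$ gives $\|F(z_k)-F(t_k)\|\le A\|z_k-t_k\|$, where $A=K_0+K_1C^\alpha+K_2\bar\lambda^{\alpha/(2(1-\alpha))}$ (or $(L_0+L_1C)e^{L_1\sqrt{\bar\lambda}}$ when $\alpha=1$). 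Hence the second candidate is at least $1/(2\lambda_kA^2)\ge1/(2\bar\lambda A^2)$.
\end{itemize}

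\emph{Step (iii).} Lemma~\ref{lem23} also gives $\sum\|z_k-t_k\|^2<\infty$ and $\sum\lambda_k\operatorname{dist}^p(t_k,U^*)<\infty$. With $\lambda_k\ge\underline\lambda$ from (ii), this yields $z_k-t_k\to0$ and $\operatorname{dist}(t_k,U^*)\to0$. Consequently, any accumulation point $\bar t$ of $\{z_k\}$ is also an accumulation point of $\{t_k\}$ and lies in the closed set $U^*$.

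Lemma~\ref{lem:Fejer} does not apply verbatim, because the sequence is only quasi-Fejér. Instead, use the fact from Step (ii) that $\|z_k-t^*\|$ converges for every $t^*\in U^*$. Taking $t^*=\bar t$, the limit of $\|z_k-\bar t\|$ is $0$ along the subsequence converging to $\bar t$, so it is $0$ for the whole sequence. Thus $z_k\to\bar t$, and then $t_k\to\bar t$ as well.

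The main obstacle is the exponent bookkeeping in the bound $\lambda_k\|F(z_k)-F(t_k)\|^2\le M$ for $\alpha\in(0,1)$. This bound must use only $\lambda_k\|F(z_k)\|^2\le1$, which holds for $k\ge1$; that restriction is the reason (i) is stated only for $k\ge1$.
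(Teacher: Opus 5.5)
Your proposal is correct. Parts (i) and (ii) match the paper's proof essentially step for step: the same bound $\|z_k-t_k\|\le\sqrt{\bar\lambda}$ for $k\ge1$, the same splitting $\lambda_k^2\le\lambda_k\lambda_{k+1}+\lambda_k(\lambda_k-\lambda_{k+1})_+$, and the same three-candidate induction. Your $K_2$ exponent is in fact exact, not just correct up to bookkeeping, since $1+\frac{\alpha}{2(1-\alpha)}=\frac{2-\alpha}{2(1-\alpha)}$.

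The genuine difference is in (iii).
\begin{itemize}
\item The paper returns to Lemma~\ref{lem32} and bounds the error term as $\frac{\lambda_k}{2\lambda_{k+1}}\|z_k-t_k\|^2$. It then uses $\lambda_k\to\lambda_\infty\ge\underline\lambda>0$, hence $\lambda_k/\lambda_{k+1}\to1$, to obtain an error-free tail inequality with coefficients $\frac14$ and $2\mu\underline\lambda$. This makes the tail genuinely Fej\'er monotone, so Lemma~\ref{lem:Fejer} applies.
\item You stay with the quasi-Fej\'er inequality (i) and read off from Lemma~\ref{lem23} that $\|z_k-t^*\|$ converges for every $t^*\in U^*$. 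You then finish with the Opial-type observation that an accumulation point $\bar t\in U^*$ forces $\lim\|z_k-\bar t\|=0$.
\end{itemize}

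Your route is shorter. It needs neither the ratio $\lambda_k/\lambda_{k+1}$ nor the Fej\'er lemma, and uses $\underline\lambda$ only to pass from $\sum\lambda_k\operatorname{dist}^p(t_k,U^*)<\infty$ to $\operatorname{dist}(t_k,U^*)\to0$.

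The paper's detour is not wasted, though. Its error-free tail inequality is exactly what Theorem~\ref{th33} uses, with $t^*$ a nearest point of $z_k$ in $U^*$, to get the contraction $\operatorname{dist}^2(z_{k+1},U^*)\le\rho\,\operatorname{dist}^2(z_k,U^*)$. The additive term $M(\lambda_k-\lambda_{k+1})_+$ in (i) would obstruct that step. So if you went on to the rates, you would need that inequality anyway.
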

\begin{proof}
	\textbf{(i)}
	By \eqref{eq:operator-norm-control} and the nonexpansiveness of $P_U$, for every $k\ge1$,
	\begin{align}
		\|z_k-t_k\|
		&=\left\|P_U(z_k)-P_U\bigl(z_k-\lambda_kF(z_k)\bigr)\right\| \notag\\
		&\le
		\lambda_k\|F(z_k)\|=
		\sqrt{\lambda_k}
		\bigl(\lambda_k\|F(z_k)\|^2\bigr)^{1/2}
		\le	\sqrt{\bar{\lambda}}.
		\label{eq:zk-tk-bound}
	\end{align}
	Suppose first that $\alpha\in(0,1)$. Applying Proposition~\ref{pro3.1} with $y=t_k$ and $y'=z_k$ gives
	\begin{equation}\label{eq:operator-difference-alpha-pre}
		\|F(z_k)-F(t_k)\|\le\|z_k-t_k\|
		\left(K_0+K_1\|F(z_k)\|^\alpha
		+K_2\|z_k-t_k\|^{\frac{\alpha}{1-\alpha}}\right).
	\end{equation}
	Moreover, \eqref{eq:zk-tk-bound} implies
	\begin{equation}\label{eq:zk-tk-bound-1}
		\|z_k-t_k\|^{\frac{\alpha}{1-\alpha}}
		\le
		\bar{\lambda}^{\frac{\alpha}{2(1-\alpha)}}.
	\end{equation}
	Using \eqref{eq:operator-norm-control}, \eqref{eq:zk-tk-bound}, and \eqref{eq:zk-tk-bound-1}, we obtain
	\begin{align*}
		\sqrt{\lambda_k}\|F(z_k)-F(t_k)\|
		&\le
		\bar{\lambda}K_0
		+\sqrt{\bar{\lambda}}K_1
		\lambda_k^{\frac{1-\alpha}{2}}
		\bigl(\lambda_k\|F(z_k)\|^2\bigr)^{\alpha/2}
		+K_2\bar{\lambda}^{\frac{2-\alpha}{2(1-\alpha)}}\\
		&\le
		\bar{\lambda}K_0
		+\bar{\lambda}^{\frac{2-\alpha}{2}}K_1
		+\bar{\lambda}^{\frac{2-\alpha}{2(1-\alpha)}}K_2.
	\end{align*}
	Similarly, when $\alpha=1$, 
	\begin{align*}
		\sqrt{\lambda_k}\|F(z_k)-F(t_k)\|
		&\le
		\sqrt{\lambda_k}\|z_k-t_k\|
		\bigl(L_0+L_1\|F(z_k)\|\bigr)
		e^{L_1\|z_k-t_k\|}\\
		&\le
		\left(\bar{\lambda}L_0
		+\sqrt{\bar{\lambda}}L_1\right)
		e^{L_1\sqrt{\bar{\lambda}}}.
	\end{align*}
	Hence, in both cases,
	\begin{equation}\label{eq:operator-difference-bound}
		\lambda_k\|F(z_k)-F(t_k)\|^2
		\le M,
		\qquad k\ge1.
	\end{equation}
	Since
	\begin{equation*}
		\lambda_k^2\le\lambda_k\lambda_{k+1}
		+\lambda_k(\lambda_k-\lambda_{k+1})+,
	\end{equation*}
	Lemma~\ref{lem32}, \eqref{eq:local-difference-control}, and \eqref{eq:operator-difference-bound} imply that, for every $y\in U$,
	\begin{align*}
		\|z_{k+1}-y\|^2\le&
		\|z_k-y\|^2-\frac12\|z_k-t_k\|^2
		-2\lambda_k\langle F(t_k),t_k-y\rangle
		+M(\lambda_k-\lambda_{k+1})_+.
	\end{align*}
	Taking $y=t^*\in U^*$ and using $p$-quasi-sharpness proves \eqref{eq:descent}.
	
	\medskip
	\noindent\textbf{(ii)}
	Fix any $t^*\in U^*$. Summing \eqref{eq:descent} from $k=1$ to $n$ and dropping the nonpositive terms gives
	\begin{equation*}
		\|z_{n+1}-t^*\|^2\le\|z_1-t^*\|^2+
		M\sum_{k=1}^{n}(\lambda_k-\lambda_{k+1})+.
	\end{equation*}
	By Lemma~\ref{lem31}, the series on the right-hand side is convergent. Hence $\{z_k\}$ is bounded. Since $U$ is closed and $F$ is continuous, there exists $C>0$ such that
	\begin{equation}\label{eq:Fzk-bound}
		\|F(z_k)\|\le C,
		\qquad k\ge0.
	\end{equation}
	If $\alpha\in(0,1)$, then \eqref{eq:operator-difference-alpha-pre}, \eqref{eq:zk-tk-bound-1}, and \eqref{eq:Fzk-bound} give
	\begin{equation*}\label{eq:operator-difference-alpha}
		\|F(z_k)-F(t_k)\|\le\|z_k-t_k\|
		\left(K_0+K_1C^\alpha
		+K_2\bar{\lambda}^{\frac{\alpha}{2(1-\alpha)}}\right),
		\qquad k\ge1.
	\end{equation*}
	Similarly, if $\alpha=1$,
	\begin{equation*}\label{eq:operator-difference-one}
		\|F(z_k)-F(t_k)\|\le\|z_k-t_k\|
		(L_0+L_1C)e^{L_1\sqrt{\bar{\lambda}}},
		\qquad k\ge1.
	\end{equation*}
	Therefore, whenever $F(z_k)\ne F(t_k)$,
	\begin{equation*}
		\frac{\|z_k-t_k\|^2}
		{2\lambda_k\|F(z_k)-F(t_k)\|^2}
		\ge
		\begin{cases}
			\displaystyle
			\frac{1}{
				2\bar{\lambda}
				\left(
				K_0+K_1C^\alpha
				+K_2\bar{\lambda}^{\frac{\alpha}{2(1-\alpha)}}
				\right)^2},
			& \alpha\in(0,1),\\[4mm]
			\displaystyle
			\frac{1}{
				2\bar{\lambda}(L_0+L_1C)^2
				e^{2L_1\sqrt{\bar{\lambda}}}},
			& \alpha=1.
		\end{cases}
	\end{equation*}
	If $F(z_k)=F(t_k)$, the corresponding candidate is interpreted as $+\infty$.
	By the definition of $\underline{\lambda}$, 
	$\lambda_0\ge\underline{\lambda}$ and
	$\lambda_1\ge\underline{\lambda}$. Suppose that
	$\lambda_k\ge\underline{\lambda}$ for some $k\ge1$. The first
	term in \eqref{eq:pf-aeg-stepsize} satisfies
	$(1+\eta_k)\lambda_k\ge\underline{\lambda}$. The preceding estimates show that the second candidate in \eqref{eq:pf-aeg-stepsize} is also no smaller than $\underline{\lambda}$.  Finally, by \eqref{eq:Fzk-bound}, the third candidate satisfies
	$1/\|F(z_{k+1})\|^2\ge1/C^2\ge\underline{\lambda}$, with the
	convention $1/0=+\infty$. Hence
	$\lambda_{k+1}\ge\underline{\lambda}$. By induction,
	\eqref{eq:lambda-lower} holds for all $k\ge0$.

	\medskip
	\noindent\textbf{(iii)}
	It follows from \eqref{eq:local-difference-control} and
	Lemma~\ref{lem32} that, for every $y\in U$,
	\begin{equation}\label{6.9}
			\|z_{k+1}-y\|^2\le\|z_k-y\|^2
			-\left(1-\frac{\lambda_k}{2\lambda_{k+1}}\right)\|z_k-t_k\|^2
			-2\lambda_k\langle F(t_k),t_k-y\rangle.
	\end{equation}
By Lemma~\ref{lem31} and \eqref{eq:lambda-lower}, $\{\lambda_k\}$ converges to a positive limit. Hence $\frac{\lambda_k}{\lambda_{k+1}}\to1$.
	Therefore, there exists $k_0\ge1$ such that
	\begin{equation*}\label{eq:ratio-control}
		1-\frac{\lambda_k}{2\lambda_{k+1}}
		\ge\frac14,
		\qquad k\ge k_0.
	\end{equation*}
	Fix an arbitrary $t^*\in U^*$. Taking $y=t^*\in U^*$ in \eqref{6.9} and applying $p$-quasi-sharpness and \eqref{eq:lambda-lower} give
	\begin{equation}\label{6.30}
			\|z_{k+1}-t^*\|^2\le\|z_k-t^*\|^2
			-\frac14\|z_k-t_k\|^2
			-2\mu\underline{\lambda}
			\operatorname{dist}^p(t_k,U^*),
			\qquad k\ge k_0.
	\end{equation}
	Applying Lemma~\ref{lem23} to the tail sequence in \eqref{6.30} shows that $\{\|z_k-t^*\|^2\}$ converges and
	\begin{equation*}\label{eq:summability-tail}
		\sum_{k=k_0}^{\infty}\|z_k-t_k\|^2<\infty,
		\qquad
		\sum_{k=k_0}^{\infty}\operatorname{dist}^p(t_k,U^*)<\infty.
	\end{equation*}
	Since only finitely many indices precede $k_0$, we get
	\begin{equation*}\label{eq:summability-global}
		\sum_{k=0}^{\infty}\|z_k-t_k\|^2<\infty,
		\qquad
		\sum_{k=0}^{\infty}\operatorname{dist}^p(t_k,U^*)<\infty.
	\end{equation*}
	Consequently,
	\begin{equation}\label{6.31}
		\|z_k-t_k\|\to0,
		\qquad
		\operatorname{dist}(t_k,U^*)\to0.
	\end{equation}
	
	We next show that every accumulation point of
	$\{z_k\}$ belongs to $U^*$.
	By part $\mathbf{(ii)}$, the sequence $\{z_k\}$ is bounded.
	Hence it has an accumulation point in $\mathbb{R}^m$.
	Let $\{z_{k_i}\}$ be a convergent subsequence such that $z_{k_i}\to\bar t$. 
	Since $z_{k_i}\in U$ and $U$ is closed, we have
	$\bar t\in U$.
	Moreover, \eqref{6.31} gives $\|t_{k_i}-z_{k_i}\|\to0$, 
	and therefore $t_{k_i}\to\bar t$. 
	Since the distance function is continuous,
	\eqref{6.31} yields
	\begin{equation*}
		\operatorname{dist}(\bar t,U^*)=
		\lim_{i\to\infty}\operatorname{dist}(t_{k_i},U^*)=0.
	\end{equation*}
	Since $U^*$ is closed, $\bar t\in U^*$. Thus every accumulation point of $\{z_k\}$ belongs to $U^*$.
	
	Finally, \eqref{6.30} implies that the tail sequence $\{z_k\}_{k\ge k_0}$ 
	is Fej\'er monotone with respect to $U^*$.
	Applying Lemma~\ref{lem:Fejer} to this tail sequence, we conclude
	that there exists $\bar t\in U^*$ such that $z_k\to\bar t$. 
	Together with $\|z_k-t_k\|\to0$, this further gives $t_k\to\bar t$. 
	Hence both $\{z_k\}$ and $\{t_k\}$ converge to the same
	solution $\bar t\in U^*$.
\end{proof}

\begin{theorem}\label{th33}
	Suppose that Assumptions~\ref{ass1}--\ref{ass3} hold, and let
	$\{t_k\}$ and $\{z_k\}$ be generated by Algorithm~\ref{alg3}. Then the
	following statements hold.
	
	\begin{enumerate}
		\item[$\mathbf{(i)}$]
		If $0<p\le2$, then there exists $c>0$ such that
		\begin{equation}\label{eq:global-linear-rate}
			\operatorname{dist}^2(z_k,U^*)\le c\rho^k,
			\qquad k\ge0,
		\end{equation}
		where
		\begin{equation}\label{eq:rho}
			\rho:=1-\min\left\{\frac18,\,
			\frac{\mu\underline{\lambda}}{B^{2-p}}\right\}\in(0,1),
		\end{equation}
		and $B>0$ is a constant satisfying
		$\operatorname{dist}(t_k,U^*)\le B$ for all $k\ge0$.
		
		\item[$\mathbf{(ii)}$]
		If $p>2$, then there exists $c_p>0$ such that
		\begin{equation}\label{eq:global-sublinear-rate}
			\operatorname{dist}^2(z_k,U^*)
			\le\frac{c_p}{(k+1)^{2/(p-2)}},
			\qquad k\ge0.
		\end{equation}
	\end{enumerate}
\end{theorem}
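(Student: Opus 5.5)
The plan is to work with the tail inequality \eqref{6.30} from the proof of Theorem~\ref{th61}(iii), specialized to a nearest solution. For $k\ge k_0$, pick $t^*_k\in U^*$ with $\|z_k-t^*_k\|=\operatorname{dist}(z_k,U^*)$, which exists by the remark after Assumption~\ref{ass3}. Since $\operatorname{dist}(z_{k+1},U^*)\le\|z_{k+1}-t_k^*\|$, \eqref{6.30} gives, with $d_k:=\operatorname{dist}(z_k,U^*)$,
\begin{equation*}
d_{k+1}^2\le d_k^2-\tfrac14\|z_k-t_k\|^2-2\mu\underline{\lambda}\operatorname{dist}^p(t_k,U^*),\qquad k\ge k_0.
\end{equation*}
The key step is to convert the right-hand side into a decrease proportional to a function of $d_k$. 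Since $\operatorname{dist}(\cdot,U^*)$ is $1$-Lipschitz, $d_k\le\|z_k-t_k\|+\operatorname{dist}(t_k,U^*)$, and therefore $d_k^2\le 2\|z_k-t_k\|^2+2\operatorname{dist}^2(t_k,U^*)$.

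For (i), with $0<p\le2$, I will use the bound $B$ on $\operatorname{dist}(t_k,U^*)$. This bound exists because $t_k$ converges by Theorem~\ref{th61}(iii), and $\operatorname{dist}(t_k,U^*)$ is therefore bounded; I may take $B$ large, in particular $B\ge1$ if convenient. Then
\begin{equation*}
\operatorname{dist}^p(t_k,U^*)=\operatorname{dist}^2(t_k,U^*)\,\operatorname{dist}^{p-2}(t_k,U^*)\ge \operatorname{dist}^2(t_k,U^*)/B^{2-p}.
\end{equation*}
Setting $\kappa=\min\{\tfrac18,\mu\underline\lambda/B^{2-p}\}$, the decrease terms are at least $\kappa\bigl(2\|z_k-t_k\|^2+2\operatorname{dist}^2(t_k,U^*)\bigr)\ge\kappa d_k^2$. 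The first part holds because $\tfrac14\ge2\kappa$, and the second because $2\mu\underline\lambda/B^{2-p}\ge2\kappa$. This gives $d_{k+1}^2\le\rho d_k^2$ for $k\ge k_0$. The constant $c$ then absorbs the finitely many indices $k<k_0$, for example $c=\max_{k\le k_0}d_k^2\rho^{-k}$, with the tail handled by induction. I also need $\rho\in(0,1)$, which is clear since $\kappa\le1/8$.

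For (ii), with $p>2$, I lower-bound the decrease by a multiple of $d_k^p$. Using $a^2+b^2\ge$ a function of $(a+b)$ is awkward with mixed powers, so I bound each term separately. The quadratic term $\|z_k-t_k\|^2$ dominates $\|z_k-t_k\|^p/D^{p-2}$, where $D$ is a uniform bound on $\|z_k-t_k\|$; this bound is $\sqrt{\bar\lambda}$ by \eqref{eq:zk-tk-bound}. Then $(a+b)^p\le2^{p-1}(a^p+b^p)$ gives
\begin{equation*}
d_{k+1}^2\le d_k^2-\gamma d_k^p=d_k^2-\gamma (d_k^2)^{1+q},\qquad q=\tfrac{p-2}{2},
\end{equation*}
for some $\gamma>0$ depending on $\mu,\underline\lambda,\bar\lambda,p$. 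Lemma~\ref{lem25}, applied to the shifted sequence $v_j=d^2_{k_0+j}$, yields $v_j\le v_0(1+\gamma q v_0^q j)^{-2/(p-2)}=O(j^{-2/(p-2)})$. Reindexing and enlarging $c_p$ to cover $k\le k_0$ and the shift $j=k-k_0$ versus $k+1$ finishes the argument. If $v_0=0$, everything is trivially zero.

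I expect the main obstacle to be this conversion of the two separate decrease terms, which involve $z_k-t_k$ and $\operatorname{dist}(t_k,U^*)$, into a single power of $d_k$ with the correct exponent. That conversion needs the uniform bounds $B$ and $\sqrt{\bar\lambda}$ to compare powers, and it must be done so that the constants match the stated $\rho$. The remaining steps are routine bookkeeping for the initial segment $k<k_0$.
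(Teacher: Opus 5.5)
Your proposal is correct and follows essentially the same route as the paper: specialize \eqref{6.30} to a nearest point of $z_k$ in $U^*$, split $\operatorname{dist}(z_k,U^*)\le\|z_k-t_k\|+\operatorname{dist}(t_k,U^*)$, and then use $\operatorname{dist}^2(t_k,U^*)\le B^{2-p}\operatorname{dist}^p(t_k,U^*)$ for $0<p\le2$, or $\|z_k-t_k\|\le\sqrt{\bar\lambda}$ with $(a+b)^p\le 2^{p-1}(a^p+b^p)$ and Lemma~\ref{lem25} for $p>2$, absorbing the indices before $k_0$ into the constants. Your constants match the paper's, e.g.\ $\gamma=2^{2-p}\min\{\mu\underline{\lambda},1/(8\bar\lambda^{(p-2)/2})\}$, and the argument works for any admissible $B$, so the normalization $B\ge1$ is unnecessary.
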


\begin{proof}
	By Theorem~\ref{th61}(iii), there exists $\bar t\in U^*$ such that $t_k\to\bar t$. Hence $\operatorname{dist}(t_k,U^*)\to0$.
	Therefore, there exists $B>0$ such that
	\begin{equation}\label{eq:B-bound}
		\operatorname{dist}(t_k,U^*)\le B,
		\qquad k\ge0.
	\end{equation}
	Let $k_0\ge1$ be such that \eqref{6.30} holds for all $k\ge k_0$. For each such $k$, let $z_k^*$ be the projection of $z_k$ onto	$U^*$, i.e., 
	\begin{equation*}
		\|z_k-z_k^*\|
		=
		\operatorname{dist}(z_k,U^*).
	\end{equation*}
	Taking $t^*=z_k^*$ in \eqref{6.30} gives
	\begin{equation}\label{eq:rate-basic}
		\operatorname{dist}^2(z_{k+1},U^*)
		\le
		\operatorname{dist}^2(z_k,U^*)
		-\frac14\|z_k-t_k\|^2
		-2\mu\underline{\lambda}
		\operatorname{dist}^p(t_k,U^*),
		\qquad k\ge k_0.
	\end{equation}
	Moreover, the triangle inequality yields
	\begin{equation}\label{eq:distance-split}
		\operatorname{dist}(z_k,U^*)
		\le\|z_k-t_k\|+\operatorname{dist}(t_k,U^*).
	\end{equation}
	
	\medskip
	\noindent\textbf{(i)}
	Suppose $0<p\le2$. Squaring \eqref{eq:distance-split} and using
	\eqref{eq:B-bound}, we obtain
	\begin{align*}
		\operatorname{dist}^2(z_k,U^*)
		&\le2\|z_k-t_k\|^2
		+2\operatorname{dist}^2(t_k,U^*)
		\le2\|z_k-t_k\|^2+2B^{2-p}\operatorname{dist}^p(t_k,U^*).
	\end{align*}
	By the definition of $\rho$ in \eqref{eq:rho},
	\begin{equation*}
		2(1-\rho)\le\frac14,
		\qquad
		2(1-\rho)B^{2-p}\le2\mu\underline{\lambda}.
	\end{equation*}
	Hence
	\begin{equation*}
		(1-\rho)\operatorname{dist}^2(z_k,U^*)
		\le\frac14\|z_k-t_k\|^2
		+2\mu\underline{\lambda}
		\operatorname{dist}^p(t_k,U^*).
	\end{equation*}
	Combining this inequality with \eqref{eq:rate-basic} yields
	\begin{equation*}
		\operatorname{dist}^2(z_{k+1},U^*)
		\le\rho\,\operatorname{dist}^2(z_k,U^*),
		\qquad k\ge k_0,
	\end{equation*}
	which implies
	\begin{equation*}\label{eq:linear-tail-rate}
		\operatorname{dist}^2(z_k,U^*)
		\le\rho^{\,k-k_0}
		\operatorname{dist}^2(z_{k_0},U^*),
		\qquad k\ge k_0.
	\end{equation*}
	Then \eqref{eq:global-linear-rate} holds for every $k\ge0$ by setting 
	\begin{equation*}
		c:=\max_{0\le j\le k_0}\rho^{-j}\operatorname{dist}^2(z_j,U^*).
	\end{equation*}

	\medskip
	\noindent\textbf{(ii)}
	Suppose $p>2$. By \eqref{eq:distance-split}, the fact that ${\left(a + b\right)}^{p} \leq  {2}^{p - 1}\left(a^{p} + b^{p}\right)$, and
	$\|z_k-t_k\|\le\sqrt{\bar{\lambda}}$, we have
	\begin{align*}
		\operatorname{dist}^p(z_k,U^*)
		&\le\left(\|z_k-t_k\|+\operatorname{dist}(t_k,U^*)\right)^p\\
		&\le2^{p-1}\left(\|z_k-t_k\|^p+\operatorname{dist}^p(t_k,U^*)
		\right)\\
		&\le2^{p-1}\left(\bar{\lambda}^{\frac{p-2}{2}}
		\|z_k-t_k\|^2+\operatorname{dist}^p(t_k,U^*)\right).
	\end{align*}
	Thus
	\begin{align*}
		&2^{2-p}
		\min\left\{
			\mu\underline{\lambda},
			\frac{1}{8\bar{\lambda}^{(p-2)/2}}
			\right\}
		\operatorname{dist}^p(z_k,U^*)
		\le\frac14\|z_k-t_k\|^2
		+2\mu\underline{\lambda}
		\operatorname{dist}^p(t_k,U^*).
	\end{align*}
	Combining this estimate with \eqref{eq:rate-basic} gives
	\begin{equation}\label{eq:sublinear-tail-recursion}
		\operatorname{dist}^2(z_{k+1},U^*)
		\le\operatorname{dist}^2(z_k,U^*)
		-2^{2-p}\min\left\{
			\mu\underline{\lambda},,
			\frac{1}{8\bar{\lambda}^{(p-2)/2}}
			\right\}
		\operatorname{dist}^p(z_k,U^*),
		\qquad k\ge k_0.
	\end{equation}
	Applying Lemma~\ref{lem25} to \eqref{eq:sublinear-tail-recursion} with $q=(p-2)/2$ yields
	\begin{equation}\label{eq:sublinear-tail-rate}
		\operatorname{dist}^2(z_k,U^*)
		\le\frac{\operatorname{dist}^2(z_{k_0},U^*)}{
			\left[1+a_p(k-k_0)
			\operatorname{dist}^{p-2}(z_{k_0},U^*)
			\right]^{2/(p-2)}
		},
		\qquad k\ge k_0,
	\end{equation}
	where
	\begin{equation*}
		a_p=2^{1-p}(p-2)
		\min\left\{
			\mu\underline{\lambda},
			\frac{1}{8\bar{\lambda}^{(p-2)/2}}
			\right\}.
	\end{equation*}
	If $\operatorname{dist}(z_{k_0},U^*)=0$, then
	\eqref{eq:sublinear-tail-recursion} implies
	$\operatorname{dist}(z_k,U^*)=0$ for all $k\ge k_0$, and the
	result follows immediately. 
	Otherwise, \eqref{eq:sublinear-tail-rate} implies, for $k\ge k_0+1$,
	\begin{equation*}
		\operatorname{dist}^2(z_k,U^*)
		\le\frac{1}{[a_p(k-k_0)]^{2/(p-2)}}.
	\end{equation*}
	For $k\ge k_0+1$, $k-k_0\ge\frac{k+1}{k_0+2}$,
	and hence
	\begin{equation*}
		\operatorname{dist}^2(z_k,U^*)\le
		\left(\frac{k_0+2}{a_p}\right)^{2/(p-2)}
		\frac{1}{(k+1)^{2/(p-2)}}.
	\end{equation*}
	Thus \eqref{eq:global-sublinear-rate} holds for every $k\ge0$ with
	\begin{equation*}
		c_p=\max\left\{
			\max_{0\le j\le k_0}
			(j+1)^{2/(p-2)}
			\operatorname{dist}^2(z_j,U^*),
			\left(\frac{k_0+2}{a_p}\right)^{2/(p-2)}
			\right\}.
	\end{equation*}
\end{proof}

\section{Variance-Reduced Adaptive Extragradient Method}
\label{sec:aeg-vr}

We next consider VIs with the finite-sum operator of the form
\eqref{finite-sum}. The following assumption will be used in the convergence analysis.

\begin{assumption}\label{ass4}
	For every $i\in[N]:=\{1,\ldots,N\}$, the component operator
	$F_i:U\to\mathbb{R}^m$ is continuous and satisfies the
	$\alpha$-symmetric generalized-smoothness condition for a common
	$\alpha\in(0,1]$. In particular, there exist
	$L_{0,i},L_{1,i}\ge0$ such that
	\begin{equation}\label{eq:component-generalized-smoothness}
		\|F_i(u)-F_i(v)\|\le
		\left(L_{0,i}+L_{1,i}\max_{\theta\in[0,1]}
		\|F_i(\theta u+(1-\theta)v)\|^\alpha\right)\|u-v\|,
		\qquad u,v\in U.
	\end{equation}
\end{assumption}

For $\alpha\in(0,1)$, let $K_{0,i},K_{1,i},K_{2,i}$ denote the
constants in Proposition~\ref{pro3.1} associated with $F_i$, and define
\begin{equation*}
	K_j:=\max_{i\in[N]}K_{j,i},
	\qquad j=0,1,2.
\end{equation*}
For $\alpha=1$, define
\begin{equation*}
	L_0:=\max_{i\in[N]}L_{0,i},
	\qquad
	L_1:=\max_{i\in[N]}L_{1,i}.
\end{equation*}

\subsection{Algorithmic Framework}

AEG-VR combines the adaptive stepsize principle of PF-AEG with an
SVRG-type estimator and a randomized snapshot update. The snapshot $s^k$ stores the full operator $F(s^k)$. A safeguard forces a refresh when the new iterate moves too far from the current snapshot; otherwise the snapshot is refreshed with probability $P_1$.

\begin{algorithm}[ht]
	\caption{Adaptive Extragradient Method with Variance Reduction (AEG-VR)}
	\label{alg4}
	
	\textbf{Initialization:}
	Choose $t^0=s^0\in U$, $\lambda_0>0$,
	$\beta\in(0,\frac12)$, $P_1\in(0,1)$, and $w>0$.
	Set $k=0$.
	
	\textit{Step 1:}
	Compute
	\begin{equation*}
		\bar t^k=\beta t^k+(1-\beta)s^k,
	\end{equation*}
	\begin{equation*}
		t^{k+\frac12}=P_U\!\left(\bar t^k-\lambda_kF(s^k)\right).
	\end{equation*}
	
	\textit{Step 2:}
	Sample $\xi_k$ uniformly from $[N]$, independently of the past, and
	compute
	\begin{equation}\label{eq:svrg-estimator}
		\widehat F_k=
		F(s^k)+F_{\xi_k}\!\left(t^{k+\frac12}\right)-F_{\xi_k}(s^k),
	\end{equation}
	\begin{equation}\label{eq:aeg-vr-correction}
		t^{k+1}=P_U\left(\bar t^k-\lambda_k\widehat F_k\right).
	\end{equation}
	
	\textit{Step 3:}
	If   $\left\|t^{k+1} -s^{k}\right\| > w$,  then  set    $s_{k+1}=t_{k+1}$.
	Otherwise, set
	\begin{equation}\label{eq:snapshot-update}
	s^{k+1}=
	\begin{cases}
	t^{k+1}, & \text{with probability } P_{1}, \\
	s^k, & \text{with probability } 1 - P_{1}.                                                                                                                                                                                                                                                                                                                                                                                                                                                                                                                                                                                                                                                                                                                                                                                                                                                                                                                                                                                                                                                                                                                                                                                                                                                                                                                                                                                                                                                                                                                                                                                                                                                                                                                                                                                                                                                                                                                                                                                                                                                                                                                                                                                                                                                                                                                                                                                                                                                                                                                                                                                                                                                                                                                                                                                                                                                                                                                                                                                                                                                                                                                                                                                                                                                                                                                                                                                                                                                                                                                                                                                                                                                                                                                                                                                                                                                                                                                                                                                                                                                                                                                                                                                                                                                                                                                                                                                                                                                                                                                                                                                                                                                                                                                                                                                                                                                                                                                                                                                                                                                                                                                                                                                                                                                                                                                                                                                                                                                                                                                                                                                                                                                                                                                                                                                                                                                                                                                                                                                                                                                                                                                                                                                                                                                                                                                                                                                                                                                                                                                                                                                                                                                                                                                                                                                                                                                                                                                                                                                                                                                                                                                                                                                                                                                                                                                                                                                                                                                                                                                                                                                                                                                                                                                                                                                                                                                                                                                                                                                                                                                                                                                                                                                                                                                                                                                                                                                                                                                                                                                                                                                                                                                                                                                                                                                                                                                                                                                                                                                                                                                                                                                                                                                                                                                                                                                                                                                                                                                               
\end{cases}
	\end{equation}
	
	\textit{Step 4:}
	Set $\eta_k=\frac{1}{(k+1)^{2}}$ and update
	\begin{equation}\label{eq:aeg-vr-stepsize}
		\lambda_{k+1}=
		\min\left\{(1+\eta_k)\lambda_k,\,
		\frac{\beta\|t^{k+\frac12}-s^k\|}
		{\|F_{\xi_k}(t^{k+\frac12})-F_{\xi_k}(s^k)\|},\,
		\frac{1}{\max_{i\in[N]}\|F_i(s^{k+1})\|^2}
		\right\}.
	\end{equation}

	\textit{Step 5:}
	Set $k\gets k+1$ and return to Step~1.
\end{algorithm}

A candidate stepsize with a zero denominator is interpreted as $+\infty$. The third candidate in \eqref{eq:aeg-vr-stepsize} indicates
\begin{equation}\label{eq:stochastic-operator-control}
	\lambda_k\max_{i\in[N]}\|F_i(s^k)\|^2\le1,
	\qquad k\ge1.
\end{equation}
The snapshot rule also gives, by induction,
\begin{equation}\label{eq:snapshot-gap}
	\|t^k-s^k\|\le w,
	\qquad k\ge0.
\end{equation}
Consequently, as in the analysis of PF-AEG, the component-norm control is
available after the first iteration, whereas the snapshot-gap control holds
from initialization. 

\subsection{Convergence Analysis}

The filtration must include both sources of randomness. Define
\begin{equation*}
	\mathcal F_k:=\sigma(\xi_0,\zeta_0,\ldots,\xi_{k-1},\zeta_{k-1}),
	\qquad
	\mathcal G_k:=\mathcal F_k\vee\sigma(\xi_k).
\end{equation*}
Then $t^k,s^k,\lambda_k,\bar t^k,t^{k+\frac12}$ are
$\mathcal F_k$-measurable; $t^{k+1}$ and
$\{\|t^{k+1}-s^k\|>w\}$ are $\mathcal G_k$-measurable; and
$s^{k+1},\lambda_{k+1}$ are $\mathcal F_{k+1}$-measurable. We write
\begin{equation*}
	\mathbb E_k[\cdot]:=
	\mathbb E[\cdot\mid\mathcal F_k],
	\qquad
	\mathbb E_k^+[\cdot]:=
	\mathbb E[\cdot\mid\mathcal G_k].
\end{equation*}
The estimator \eqref{eq:svrg-estimator} is conditionally unbiased:
\begin{equation}\label{eq:svrg-unbiased}
	\mathbb E_k[\widehat F_k]=
	F\!\left(t^{k+\frac12}\right).
\end{equation}

\begin{lemma}\label{lem41}
	Let $\{\lambda_k\}$ be generated by Algorithm~\ref{alg4}. Then, almost
	surely,
	\begin{equation}\label{eq:vr-lambda-upper}
		0<\lambda_k\le
		\bar\lambda:=
		\lambda_0\exp\!\left(\sum_{j=0}^{\infty}\eta_j\right),
		\qquad k\ge0,
	\end{equation}
	the limit $\lambda_k\to\lambda_\infty\ge0$ exists, and
	\begin{equation}\label{eq:vr-lambda-variation}
		\sum_{k=0}^{\infty}|\lambda_{k+1}-\lambda_k|<\infty.
	\end{equation}
	Moreover,
	\begin{equation}\label{eq:vr-lambda-variation-expectation}
		\sum_{k=0}^{\infty}\mathbb E(\lambda_{k+1}-\lambda_k)_+<\infty,
		\qquad
		\sum_{k=0}^{\infty}\mathbb E(\lambda_k-\lambda_{k+1})_+<\infty.
	\end{equation}
\end{lemma}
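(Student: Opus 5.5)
The plan is to repeat the deterministic argument of Lemma~\ref{lem31} pathwise, for each fixed realization of the random sequences $\{\xi_k\}$ and $\{\zeta_k\}$. Then I would pass to expectations using the fact that all bounds are deterministic.

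\textbf{Step 1: upper bound.} The first candidate in \eqref{eq:aeg-vr-stepsize} gives $\lambda_{k+1}\le(1+\eta_k)\lambda_k$ on every sample path. Iterating and using $\log(1+x)\le x$ yields $\lambda_k\le\lambda_0\prod_{j<k}(1+\eta_j)\le\bar\lambda$.

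Positivity follows by induction. Suppose $\lambda_k>0$. The first candidate is then positive. The third candidate is positive, or $+\infty$ by the stated convention. For the second candidate, I would note that if $t^{k+\frac12}=s^k$ the denominator vanishes, so the candidate is $+\infty$. Otherwise the numerator is positive, and a zero denominator again gives $+\infty$. Hence $\lambda_{k+1}>0$.

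This is the only point needing a little care. The convention for zero denominators has to cover the $0/0$ case, and I read it as doing so. This is consistent with the remark after Algorithm~\ref{alg4}.

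\textbf{Step 2: summable variation.} Exactly as in Lemma~\ref{lem31}(iii), I would use $(\lambda_{k+1}-\lambda_k)_+\le\eta_k\lambda_k\le\bar\lambda\eta_k$. Then I would use the telescoping identity
\[
\lambda_{n+1}-\lambda_0=\sum_{k=0}^{n}(\lambda_{k+1}-\lambda_k)_+-\sum_{k=0}^{n}(\lambda_{k+1}-\lambda_k)_-,
\]
together with $\lambda_{n+1}\ge0$. This gives, pathwise,
\[
\sum_{k=0}^{\infty}(\lambda_k-\lambda_{k+1})_+\le\lambda_0+\bar\lambda\sum_{k=0}^\infty\eta_k.
\]
It follows that $\sum_k|\lambda_{k+1}-\lambda_k|<\infty$ on every path. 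So $\{\lambda_k\}$ is Cauchy and converges to some $\lambda_\infty\in[0,\bar\lambda]$. All of this holds surely, hence almost surely.

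\textbf{Step 3: expectations.} The bounds above are deterministic constants, independent of the sample path. Taking expectations gives $\sum_k\mathbb E(\lambda_{k+1}-\lambda_k)_+\le\bar\lambda\sum_k\eta_k<\infty$.

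For the negative parts, the terms are nonnegative, so Tonelli (monotone convergence) lets me interchange sum and expectation:
\[
\sum_k\mathbb E(\lambda_k-\lambda_{k+1})_+=\mathbb E\sum_k(\lambda_k-\lambda_{k+1})_+\le\lambda_0+\bar\lambda\sum_k\eta_k.
\]
Measurability of each $\lambda_k$ is given by the filtration discussion preceding the lemma, so these expectations are well defined.

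I do not expect any genuine obstacle here. The key observation is simply that the pathwise bound is uniform in the path, which makes the passage to expectations immediate.
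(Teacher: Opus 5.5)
Your proposal is correct and takes essentially the same route as the paper. Both repeat the Lemma~\ref{lem31} argument pathwise (upper bound from the first candidate, $(\lambda_{k+1}-\lambda_k)_+\le\bar\lambda\eta_k$, telescoping with $\lambda_{n+1}\ge0$ for the negative parts) and then pass to expectations via the deterministic majorants and Tonelli's theorem. Your explicit induction for positivity, which handles the $0/0$ case of the second candidate through the $+\infty$ convention, is a detail the paper leaves implicit.
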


\begin{proof}
	The first candidate in \eqref{eq:aeg-vr-stepsize} gives
	$\lambda_{k+1}\le(1+\eta_k)\lambda_k$ pathwise. Hence the proof of
	\eqref{eq:vr-lambda-upper} is identical to that of Lemma~\ref{lem31}.
	Also,
	\begin{equation*}
		(\lambda_{k+1}-\lambda_k)_+\le\bar\lambda\eta_k.
	\end{equation*}
	Since $\sum_k\eta_k<\infty$, the positive variations $(\lambda_{k+1}-\lambda_k)_+$ are summable
	pathwise. The identity
	\begin{equation*}
		\lambda_{N+1}-\lambda_0=
		\sum_{k=0}^{N}(\lambda_{k+1}-\lambda_k)_+
		-\sum_{k=0}^{N}(\lambda_{k+1}-\lambda_k)_-
	\end{equation*}
	and $\lambda_{N+1}\ge0$ imply summability of the negative variations.
	This proves \eqref{eq:vr-lambda-variation}, and therefore
	$\{\lambda_k\}$ is Cauchy and convergent. The expectation bounds in
	\eqref{eq:vr-lambda-variation-expectation} follow from the same deterministic
	majorants and Tonelli's theorem.
\end{proof}

\begin{lemma}\label{lem42}
	Suppose Assumptions~\ref{ass1}--\ref{ass4} hold. For every $t^*\in U^*$
	and every $k\ge1$,
	\begin{align}\label{eq:vr-one-step}
			\mathbb E_k\|t^{k+1}-t^*\|^2
			\le&
			\beta\|t^k-t^*\|^2+(1-\beta)\|s^k-t^*\|^2
			-2\mu\lambda_k\operatorname{dist}^p(t^{k+\frac12},U^*)\notag\\
			&-(1-\beta)\mathbb E_k\|t^{k+1}-t^{k+\frac12}\|^2
			-(1-2\beta)\|s^k-t^{k+\frac12}\|^2
			-\beta\|t^k-t^{k+\frac12}\|^2\notag\\
			&+\bar M\,\mathbb E_k(\lambda_k-\lambda_{k+1})_+.
	\end{align}
	where $R=\beta w+\sqrt{\bar\lambda}$ and
	\begin{equation}\label{eq:Mbar}
		\bar M=
		\begin{cases}
			\displaystyle
			\frac{2R^2}{\beta}
			\left[\sqrt{\bar\lambda}
			\left(K_0+K_2R^{\frac{\alpha}{1-\alpha}}\right)
			+\bar\lambda^{\frac{1-\alpha}{2}}K_1
			\right]^2,
			& \alpha\in(0,1),\\[4mm]
			\displaystyle
			\frac{2R^2}{\beta}
			\left[(\sqrt{\bar\lambda}L_0+L_1)e^{L_1R}\right]^2,
			& \alpha=1.
		\end{cases}
	\end{equation}
\end{lemma}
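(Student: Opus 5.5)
The plan is to run the argument of Lemma~\ref{lem32} with the anchor point $\bar t^k$ in place of $z_k$, and then take the conditional expectation $\mathbb E_k$ only at the very end.

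\textbf{Step 1 (projection inequality for $t^{k+1}$).} Apply \eqref{2.2} to $t^{k+1}=P_U(\bar t^k-\lambda_k\widehat F_k)$ with $u=t^*$. This gives
\[
\|t^{k+1}-t^*\|^2\le\|\bar t^k-t^*\|^2-\|t^{k+1}-\bar t^k\|^2+2\lambda_k\langle\widehat F_k,t^*-t^{k+1}\rangle .
\]
Split $t^*-t^{k+1}=(t^*-t^{k+\frac12})+(t^{k+\frac12}-t^{k+1})$, and expand
\[
\|t^{k+1}-\bar t^k\|^2=\|t^{k+1}-t^{k+\frac12}\|^2+\|t^{k+\frac12}-\bar t^k\|^2+2\langle t^{k+1}-t^{k+\frac12},t^{k+\frac12}-\bar t^k\rangle .
\]
Since $t^{k+\frac12}=P_U(\bar t^k-\lambda_kF(s^k))$ and $t^{k+1}\in U$, inequality \eqref{2.1} gives
\[
\langle t^{k+1}-t^{k+\frac12},\,\bar t^k-\lambda_kF(s^k)-t^{k+\frac12}\rangle\le0 .
\]
After this cancellation the only cross term left is $2\lambda_k\langle\widehat F_k-F(s^k),t^{k+\frac12}-t^{k+1}\rangle$, with $\widehat F_k-F(s^k)=\Delta_k:=F_{\xi_k}(t^{k+\frac12})-F_{\xi_k}(s^k)$.

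\textbf{Step 2 (convex-combination identity).} Use the identity
\[
\|\bar t^k-x\|^2=\beta\|t^k-x\|^2+(1-\beta)\|s^k-x\|^2-\beta(1-\beta)\|t^k-s^k\|^2 .
\]
With $x=t^*$ it produces the leading terms $\beta\|t^k-t^*\|^2+(1-\beta)\|s^k-t^*\|^2$. With $x=t^{k+\frac12}$ it converts
\[
-\|t^{k+\frac12}-\bar t^k\|^2-\beta(1-\beta)\|t^k-s^k\|^2
\]
into $-\beta\|t^k-t^{k+\frac12}\|^2-(1-\beta)\|s^k-t^{k+\frac12}\|^2$. Here the $+\beta(1-\beta)\|t^k-s^k\|^2$ produced by the identity at $x=t^{k+\frac12}$ exactly cancels the $-\beta(1-\beta)\|t^k-s^k\|^2$ from the identity at $x=t^*$.

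\textbf{Step 3 (the variance/cross term; main obstacle).} Apply Young's inequality with weight $\beta$:
\[
2\lambda_k\langle\Delta_k,t^{k+\frac12}-t^{k+1}\rangle\le\frac{\lambda_k^2}{\beta}\|\Delta_k\|^2+\beta\|t^{k+1}-t^{k+\frac12}\|^2 .
\]
This leaves $-(1-\beta)\|t^{k+1}-t^{k+\frac12}\|^2$. The difficulty is that the stepsize rule \eqref{eq:aeg-vr-stepsize} controls $\lambda_{k+1}\|\Delta_k\|\le\beta\|t^{k+\frac12}-s^k\|$, not $\lambda_k\|\Delta_k\|$. I would write
\[
\lambda_k^2\le\lambda_{k+1}^2+(\lambda_k+\lambda_{k+1})(\lambda_k-\lambda_{k+1})_+ .
\]
The $\lambda_{k+1}^2$ part then gives at most $\beta\|t^{k+\frac12}-s^k\|^2$, which turns the coefficient $(1-\beta)$ into $(1-2\beta)$. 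For the remainder I need a deterministic bound on $(\lambda_k+\lambda_{k+1})\|\Delta_k\|^2/\beta$.

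For $k\ge1$ I use \eqref{eq:snapshot-gap} and \eqref{eq:stochastic-operator-control}, together with nonexpansiveness of $P_U$ and $s^k=P_U(s^k)$:
\[
\|t^{k+\frac12}-s^k\|\le\|\bar t^k-s^k\|+\lambda_k\|F(s^k)\|\le\beta w+\sqrt{\bar\lambda}=R .
\]
Then apply Proposition~\ref{pro3.1} to $F_{\xi_k}$ with $y'=s^k$, using $\|F_{\xi_k}(s^k)\|^\alpha\le\lambda_k^{-\alpha/2}$. This gives
\[
\sqrt{\lambda_k}\,\|\Delta_k\|\le R\Bigl[\sqrt{\bar\lambda}\bigl(K_0+K_2R^{\frac{\alpha}{1-\alpha}}\bigr)+\bar\lambda^{\frac{1-\alpha}{2}}K_1\Bigr],
\]
or the exponential analogue when $\alpha=1$.

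Bounding $\lambda_k+\lambda_{k+1}$ is the delicate point. When $\lambda_{k+1}<\lambda_k$ we have $\lambda_k+\lambda_{k+1}\le2\lambda_k$, and otherwise $(\lambda_k-\lambda_{k+1})_+=0$. So the remainder is at most $\bar M(\lambda_k-\lambda_{k+1})_+$ with $\bar M$ as in \eqref{eq:Mbar}.

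\textbf{Step 4 (conditional expectation).} Take $\mathbb E_k$. The points $t^k,s^k,\bar t^k,t^{k+\frac12}$ and the stepsize $\lambda_k$ are $\mathcal F_k$-measurable. Hence by \eqref{eq:svrg-unbiased},
\[
\mathbb E_k\,2\lambda_k\langle\widehat F_k,t^*-t^{k+\frac12}\rangle=-2\lambda_k\langle F(t^{k+\frac12}),t^{k+\frac12}-t^*\rangle\le-2\mu\lambda_k\operatorname{dist}^p(t^{k+\frac12},U^*)
\]
by Assumption~\ref{ass3}. The $\lambda_{k+1}$-dependent terms remain inside $\mathbb E_k$. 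Collecting the terms from Steps 1--3 yields \eqref{eq:vr-one-step}.
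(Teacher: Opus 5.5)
Your proposal is correct and follows essentially the same route as the paper's proof: the projection inequality combined with the optimality condition for $t^{k+\frac12}$, the convex-combination identity for $\bar t^k$ evaluated at $t^*$ and at $t^{k+\frac12}$, and Young's inequality with weight $\beta$. It then uses the splitting $\lambda_k^2\le\lambda_{k+1}^2+2\lambda_k(\lambda_k-\lambda_{k+1})_+$ together with the deterministic bound $\frac{2\lambda_k}{\beta}\|F_{\xi_k}(t^{k+\frac12})-F_{\xi_k}(s^k)\|^2\le\bar M$, and finishes with conditional unbiasedness and $p$-quasi-sharpness, exactly as in the paper.
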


\begin{proof}
	Fix any $t^*\in U^*$ and $k\ge1$. Since
	$t^{k+1}=P_U\left(\bar t^k-\lambda_k\widehat F_k\right)$, 
	the inequality \eqref{2.2} gives
	\begin{align*}
		\|t^{k+1}-t^*\|^2
		\le{}&\|\bar t^k-t^*\|^2
		-\|t^{k+1}-\bar t^k\|^2+2\lambda_k
		\langle \widehat F_k,t^*-t^{k+1}\rangle.
	\end{align*}
	Splitting $t^*-t^{k+1}=t^*-t^{k+\frac12}+t^{k+\frac12}-t^{k+1}$,	we obtain
	\begin{align}
		\|t^{k+1}-t^*\|^2
		\le{}&\|\bar t^k-t^*\|^2
		-\|t^{k+1}-\bar t^k\|^2-2\lambda_k
		\langle t^{k+\frac12}-t^*,\widehat F_k\rangle
		+2\lambda_k\langle\widehat F_k,t^{k+\frac12}-t^{k+1}\rangle.
		\label{eq:vr-projection-split}
	\end{align}
	Applying \eqref{2.1} with	$u=t^{k+1}\in U$ and $v=\bar t^k-\lambda_kF(s^k)$ yields
	\begin{equation*}
		\left\langle
		\bar t^k-\lambda_kF(s^k)-t^{k+\frac12},
		t^{k+1}-t^{k+\frac12}
		\right\rangle
		\le0.
	\end{equation*}
	Equivalently,
	\begin{equation*}\label{eq:vr-prediction-optimality}
		\lambda_k\left\langle F(s^k),t^{k+\frac12}-t^{k+1}\right\rangle
		\le\left\langle\bar t^k-t^{k+\frac12},t^{k+\frac12}-t^{k+1}\right\rangle.
	\end{equation*}
	Using
	$\widehat F_k=F(s^k)+F_{\xi_k}(t^{k+\frac12})-F_{\xi_k}(s^k)$, it follows that
	\begin{align*}
		\lambda_k\langle\widehat F_k,t^{k+\frac12}-t^{k+1}\rangle
		\le&\left\langle\bar t^k-t^{k+\frac12},
		t^{k+\frac12}-t^{k+1}\right\rangle
		+\lambda_k\left\langle
		F_{\xi_k}(t^{k+\frac12})-F_{\xi_k}(s^k),
		t^{k+\frac12}-t^{k+1}\right\rangle.
	\end{align*}
	Substituting this estimate into
	\eqref{eq:vr-projection-split} and using the identity
	\begin{align*}
		&-\|t^{k+1}-\bar t^k\|^2+
		2\left\langle\bar t^k-t^{k+\frac12},t^{k+\frac12}-t^{k+1}\right\rangle
		=-\|t^{k+1}-t^{k+\frac12}\|^2
		-\|\bar t^k-t^{k+\frac12}\|^2,
	\end{align*}
	we obtain
	\begin{align}
		\|t^{k+1}-t^*\|^2
		\le{}&\|\bar t^k-t^*\|^2
		-\|t^{k+1}-t^{k+\frac12}\|^2
		-\|\bar t^k-t^{k+\frac12}\|^2
		\notag\\
		&-2\lambda_k\left\langle
		t^{k+\frac12}-t^*,\widehat F_k\right\rangle
		+2\lambda_k\left\langle
		F_{\xi_k}(t^{k+\frac12})-F_{\xi_k}(s^k),
		t^{k+\frac12}-t^{k+1}\right\rangle.
		\label{eq:vr-before-young}
	\end{align}
	By Young's inequality,
	\begin{equation*}
		2\lambda_k\left\langle
		F_{\xi_k}(t^{k+\frac12})-F_{\xi_k}(s^k),
		t^{k+\frac12}-t^{k+1}\right\rangle
		\le\beta\|t^{k+1}-t^{k+\frac12}\|^2
		+\frac{\lambda_k^2}{\beta}\|F_{\xi_k}(t^{k+\frac12})-F_{\xi_k}(s^k)\|^2.
	\end{equation*}
	Therefore,
	\begin{align}
		\|t^{k+1}-t^*\|^2
		\le{}&\|\bar t^k-t^*\|^2
		-(1-\beta)\|t^{k+1}-t^{k+\frac12}\|^2
		-\|\bar t^k-t^{k+\frac12}\|^2
		\notag\\
		&-2\lambda_k\left\langle
		t^{k+\frac12}-t^*,\widehat F_k\right\rangle
		+\frac{\lambda_k^2}{\beta}\|F_{\xi_k}(t^{k+\frac12})-F_{\xi_k}(s^k)\|^2.
		\label{eq:vr-after-young}
	\end{align}
	Since $\bar t^k=\beta t^k+(1-\beta)s^k$, the standard convex-combination identity gives, for every $u\in\mathbb R^m$,
	\begin{equation*}
		\|\bar t^k-u\|^2=\beta\|t^k-u\|^2
		+(1-\beta)\|s^k-u\|^2
		-\beta(1-\beta)\|t^k-s^k\|^2.
	\end{equation*}
	Applying this identity with $u=t^*$ and
	$u=t^{k+\frac12}$, and subtracting the resulting equalities, yields
	\begin{align*}
		\|\bar t^k-t^*\|^2
		-\|\bar t^k-t^{k+\frac12}\|^2
		={}&\beta\|t^k-t^*\|^2
		+(1-\beta)\|s^k-t^*\|^2\\
		&-\beta\|t^k-t^{k+\frac12}\|^2
		-(1-\beta)\|s^k-t^{k+\frac12}\|^2.
	\end{align*}
	Substituting this equality into
	\eqref{eq:vr-after-young} gives
	\begin{align}
		\|t^{k+1}-t^*\|^2
		\le{}&\beta\|t^k-t^*\|^2
		+(1-\beta)\|s^k-t^*\|^2
		-2\lambda_k\left\langle
		t^{k+\frac12}-t^*,\widehat F_k\right\rangle
		\notag\\
		&-(1-\beta)
		\|t^{k+1}-t^{k+\frac12}\|^2
		-(1-\beta)\|s^k-t^{k+\frac12}\|^2
		-\beta\|t^k-t^{k+\frac12}\|^2
		\notag\\
		&+\frac{\lambda_k^2}{\beta}\|F_{\xi_k}(t^{k+\frac12})-F_{\xi_k}(s^k)\|^2.
		\label{eq:vr-prelocal}
	\end{align}

	We next control the last term. By the second candidate in
	\eqref{eq:aeg-vr-stepsize},
	\begin{equation*}
		\frac{\lambda_{k+1}^2}{\beta}
		\|F_{\xi_k}(t^{k+\frac12})-F_{\xi_k}(s^k)\|^2
		\le\beta\|t^{k+\frac12}-s^k\|^2.
	\end{equation*}
	Using $\lambda_k^2\le\lambda_{k+1}^2+2\lambda_k(\lambda_k-\lambda_{k+1})_+$, we have
	\begin{align}
		\frac{\lambda_k^2}{\beta}
		\|F_{\xi_k}(t^{k+\frac12})-F_{\xi_k}(s^k)\|^2
		\le{}&\beta\|t^{k+\frac12}-s^k\|^2
		+\frac{2\lambda_k}{\beta}
		\|F_{\xi_k}(t^{k+\frac12})-F_{\xi_k}(s^k)\|^2
		(\lambda_k-\lambda_{k+1})_+.
		\label{eq:vr-error-split}
	\end{align}
	Since $s^k\in U$,	nonexpansiveness of $P_U$, \eqref{eq:snapshot-gap}, and
	\eqref{eq:stochastic-operator-control} yield
	\begin{align}
		\|t^{k+\frac12}-s^k\|
		&=\left\|P_U\!\left(\bar t^k-\lambda_kF(s^k)\right)-P_U(s^k)\right\|
		\notag\\
		&\le\beta\|t^k-s^k\|+\lambda_k\|F(s^k)\|
		\notag\\
		&\le\beta w+\lambda_k\max_{i\in[N]}\|F_i(s^k)\|
		\notag\\
		&\le\beta w+\sqrt{\bar\lambda}=R.
		\label{eq:vr-prediction-gap}
	\end{align}
	If $\alpha\in(0,1)$, by
	Assumption~\ref{ass4}, Proposition~\ref{pro3.1}, and
	\eqref{eq:vr-prediction-gap},
	\begin{align*}
		&\sqrt{\frac{2\lambda_k}{\beta}}\,
		\|F_{\xi_k}(t^{k+\frac12})-F_{\xi_k}(s^k)\|\\
		&\quad\le\sqrt{\frac{2}{\beta}}\,
		\|t^{k+\frac12}-s^k\|
		\Bigl[\sqrt{\lambda_k}
		\left(K_0+K_2\|t^{k+\frac12}-s^k\|^{\frac{\alpha}{1-\alpha}}\right)
		+K_1\sqrt{\lambda_k}\,\|F_{\xi_k}(s^k)\|^\alpha\Bigr]\\
		&\quad\le\sqrt{\frac{2}{\beta}}\,R
		\left[\sqrt{\bar\lambda}
		\left(K_0+K_2R^{\frac{\alpha}{1-\alpha}}\right)
		+\bar\lambda^{\frac{1-\alpha}{2}}K_1\right].
	\end{align*}
	If $\alpha=1$, Proposition~\ref{pro3.1} similarly gives
	\begin{align*}
		&\sqrt{\frac{2\lambda_k}{\beta}}\,
		\|F_{\xi_k}(t^{k+\frac12})-F_{\xi_k}(s^k)\|
		\le\sqrt{\frac{2}{\beta}}\,R
		\left(\sqrt{\bar\lambda}L_0+L_1\right)e^{L_1R},
	\end{align*}
	where we used $\sqrt{\lambda_k}\|F_{\xi_k}(s^k)\|
	\le1$. Thus, in both cases, by the definition of $\bar M$,
	\begin{equation}\label{eq:vr-component-bound}
		\frac{2\lambda_k}{\beta}
		\|F_{\xi_k}(t^{k+\frac12})-F_{\xi_k}(s^k)\|^2
		\le\bar M.
	\end{equation}
	Combining \eqref{eq:vr-error-split} and
	\eqref{eq:vr-component-bound} gives
	\begin{align*}
		\frac{\lambda_k^2}{\beta}
		\|F_{\xi_k}(t^{k+\frac12})-F_{\xi_k}(s^k)\|^2
		\le&\beta\|t^{k+\frac12}-s^k\|^2
		+\bar M(\lambda_k-\lambda_{k+1})_+.
	\end{align*}
	Substituting this estimate into \eqref{eq:vr-prelocal} yields
	\begin{align*}
		\|t^{k+1}-t^*\|^2
		\le{}&\beta\|t^k-t^*\|^2
		+(1-\beta)\|s^k-t^*\|^2-2\lambda_k
		\langle t^{k+\frac12}-t^*,\widehat F_k\rangle\\
		&-(1-\beta)\|t^{k+1}-t^{k+\frac12}\|^2
		-(1-2\beta)\|s^k-t^{k+\frac12}\|^2
		-\beta\|t^k-t^{k+\frac12}\|^2\\
		&+\bar M(\lambda_k-\lambda_{k+1})_+.
	\end{align*}
	
	Finally, taking conditional expectation with respect to
	$\mathcal F_k$, using \eqref{eq:svrg-unbiased}, gives
	\begin{equation*}
		-\mathbb E_k
		\langle t^{k+\frac12}-t^*,\widehat F_k\rangle
		=-\langle t^{k+\frac12}-t^*,F(t^{k+\frac12})\rangle
		\le-\mu\operatorname{dist}^p(t^{k+\frac12},U^*),
	\end{equation*}
	where the last inequality is due to $p$-quasi-sharpness.
	Substituting this inequality into the preceding estimate proves
	\eqref{eq:vr-one-step}.
\end{proof}

\begin{lemma}\label{lem:finite-threshold}
	Assume
	\begin{equation}\label{eq:w-condition}
		w>\frac{1+\beta}{1-\beta-\beta^2}\sqrt{\bar\lambda}.
	\end{equation}
	Then the safeguard event
	\begin{equation*}
		A_k:=\{\|t^{k+1}-s^k\|>w\}
	\end{equation*}
	occurs only finitely many times almost surely.
\end{lemma}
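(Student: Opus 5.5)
The plan is to prove a pathwise statement: along every sample path, $\|t^{k+1}-s^k\|$ is eventually bounded by a quantity strictly smaller than $w$. Then $A_k$ can occur only finitely often, and this holds surely, hence almost surely. The key observation is that the threshold in \eqref{eq:w-condition} is exactly the equation
\[
(1+\beta)(\beta w+\sqrt{\bar\lambda})=w .
\]
So the goal is an estimate of the form
\[
\|t^{k+1}-s^k\|\le(1+\beta)R+\varepsilon_k,\qquad \varepsilon_k\to0,
\]
with $R=\beta w+\sqrt{\bar\lambda}$ as in Lemma~\ref{lem42}. Condition \eqref{eq:w-condition} is equivalent to $(1+\beta)R<w$, which leaves a strict margin.

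\textbf{Step 1 (triangle split).} For $k\ge1$, I would write
\[
\|t^{k+1}-s^k\|\le\|t^{k+\frac12}-s^k\|+\|t^{k+1}-t^{k+\frac12}\|.
\]
The first term is at most $R$ by \eqref{eq:vr-prediction-gap}; that bound uses \eqref{eq:snapshot-gap} and \eqref{eq:stochastic-operator-control}. For the second term, both points are projections onto $U$ of points that differ by $\lambda_k\bigl(F_{\xi_k}(t^{k+\frac12})-F_{\xi_k}(s^k)\bigr)$. Nonexpansiveness \eqref{2.3} therefore gives
\[
\|t^{k+1}-t^{k+\frac12}\|\le\lambda_k\,\Delta_k,\qquad \Delta_k:=\|F_{\xi_k}(t^{k+\frac12})-F_{\xi_k}(s^k)\|.
\]

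\textbf{Step 2 (trading $\lambda_k$ for $\lambda_{k+1}$).} The second candidate in \eqref{eq:aeg-vr-stepsize} gives $\lambda_{k+1}\Delta_k\le\beta\|t^{k+\frac12}-s^k\|\le\beta R$, with the $+\infty$ convention when $\Delta_k=0$. Splitting $\lambda_k=\lambda_{k+1}+(\lambda_k-\lambda_{k+1})$ then yields
\[
\lambda_k\Delta_k\le\beta R+(\lambda_k-\lambda_{k+1})_+\Delta_k .
\]
To control the remainder, I would write
\[
(\lambda_k-\lambda_{k+1})_+\Delta_k\le\sqrt{(\lambda_k-\lambda_{k+1})_+}\,\sqrt{\lambda_k}\,\Delta_k ,
\]
which uses $(\lambda_k-\lambda_{k+1})_+\le\lambda_k$. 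The bound \eqref{eq:vr-component-bound} from the proof of Lemma~\ref{lem42} gives $\sqrt{\lambda_k}\,\Delta_k\le\sqrt{\beta\bar M/2}$, a deterministic constant obtained from generalized smoothness (Proposition~\ref{pro3.1}) together with $\|t^{k+\frac12}-s^k\|\le R$ and $\sqrt{\lambda_k}\|F_{\xi_k}(s^k)\|\le1$. Altogether,
\[
\|t^{k+1}-s^k\|\le(1+\beta)R+\sqrt{\beta\bar M/2}\,\sqrt{(\lambda_k-\lambda_{k+1})_+}.
\]

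\textbf{Step 3 (conclusion).} By Lemma~\ref{lem41}, $\sum_k|\lambda_{k+1}-\lambda_k|<\infty$ pathwise, so $(\lambda_k-\lambda_{k+1})_+\to0$ on every path. Since $(1+\beta)R<w$, there is a (path-dependent) $k_1$ such that $\|t^{k+1}-s^k\|<w$ for all $k\ge k_1$. Hence $A_k$ fails for all $k\ge k_1$, which proves the claim.

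\textbf{Main obstacle.} The delicate point is Step 2. We have no lower bound on $\lambda_k$ at this stage, so the ratio $\lambda_k/\lambda_{k+1}$ cannot be controlled directly. The square-root trick above avoids needing such a bound, since it only requires the drops $(\lambda_k-\lambda_{k+1})_+$ to vanish. A second point to check is that \eqref{eq:snapshot-gap} and \eqref{eq:stochastic-operator-control} hold on every path for $k\ge1$, so that all constants are deterministic. The single initial index $k=0$ is irrelevant to the finiteness of the set $\{k:A_k\text{ occurs}\}$.
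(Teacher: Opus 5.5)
Your proposal is correct and follows the paper's proof essentially step for step: the same triangle split with nonexpansiveness, the same add-and-subtract of $\lambda_{k+1}$ via the second stepsize candidate, and the same square-root trick with \eqref{eq:vr-component-bound}, leading to \eqref{eq:threshold-bound}. The only difference is the last step. You use $(\lambda_k-\lambda_{k+1})_+\to0$, whereas the paper turns \eqref{eq:threshold-bound} into the indicator bound $\mathbf 1_{A_k}\le\frac{\beta\bar M}{2[w-(1+\beta)R]^2}(\lambda_k-\lambda_{k+1})_+$ and sums it; this also gives a deterministic bound on the total number of safeguard events, and it is reused after conditioning in the proof of Theorem~\ref{th41}(i).
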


\begin{proof}
	For every $k\ge1$, by \eqref{2.3},  \eqref{eq:aeg-vr-correction}  and \eqref{eq:aeg-vr-stepsize},  we have 
	\begin{align}
		\|t^{k+1}-s^{k}\|
		\leq & \|t^{k+1}-t^{k+\frac{1}{2}}\|
		+\|t^{k+\frac{1}{2}}-s^{k}\|\notag \\
		\leq & \|t^{k+\frac{1}{2}}-s^{k}\|
		+\lambda_k\|F_{\xi_k}(t^{k+\frac{1}{2}})-F_{\xi_k}(s^k)\|\notag \\
		\leq &(1+\beta)\|t^{k+\frac{1}{2}}-s^{k}\|
		+(\lambda_k-\lambda_{k+1})_+
		\|F_{\xi_k}(t^{k+\frac{1}{2}})-F_{\xi_k}(s^k)\|,
		\label{f1}
	\end{align}
	where the last inequality follows from the second inequality by adding and subtracting the term $\lambda_{k+1}\|F_{\xi_k}(t^{k+\frac{1}{2}})-F_{\xi_k}(s^k)\|$.	
	Using \eqref{eq:vr-component-bound}, we obtain
	\begin{align*}
		(\lambda_k-\lambda_{k+1})_+
		\|F_{\xi_k}(t^{k+\frac{1}{2}})-F_{\xi_k}(s^k)\|
		\le&\sqrt{(\lambda_k-\lambda_{k+1})_+}\sqrt{\lambda_k}
		\|F_{\xi_k}(t^{k+\frac{1}{2}})-F_{\xi_k}(s^k)\|\\
		\le&\sqrt{\frac{\beta\bar M}{2}}\sqrt{(\lambda_k-\lambda_{k+1})_+},
	\end{align*}
	which together with \eqref{eq:vr-prediction-gap} and \eqref{f1}  gives
	\begin{equation}\label{eq:threshold-bound}
		\|t^{k+1}-s^k\|\le(1+\beta)R+
		\sqrt{\frac{\beta\bar M}{2}}\,
		\sqrt{(\lambda_k-\lambda_{k+1})_+},
		\qquad k\ge1.
	\end{equation}
	Condition \eqref{eq:w-condition} is equivalent to $w>(1+\beta)R$. Hence, on $A_k$,
	\begin{equation*}
		(\lambda_k-\lambda_{k+1})_+>\frac{2[w-(1+\beta)R]^2}{\beta\bar M}>0
	\end{equation*}
	when $\bar M>0$. Therefore,
	\begin{equation}
		\mathbf 1_{A_k}\le\frac{\beta\bar M}{2[w-(1+\beta)R]^2}
		(\lambda_k-\lambda_{k+1})_+.
		\label{eq:Ak-lambda-bound}
	\end{equation}
	Lemma~\ref{lem41} implies
	\begin{equation*}
		\sum_{k=1}^{\infty}\mathbf 1_{A_k}<\infty
		\qquad\text{almost surely}.
	\end{equation*}
	If $\bar M=0$, then \eqref{eq:threshold-bound} and
	\eqref{eq:w-condition} imply directly that $A_k$ does not occur for
	any $k\ge1$. In either case, the possible event at the single initial
	index $k=0$ does not affect the conclusion.
\end{proof}

\begin{theorem}\label{th41}
	Suppose Assumptions~\ref{ass1}--\ref{ass4} hold, let the iterates be
	generated by Algorithm~\ref{alg4}, and assume \eqref{eq:w-condition} holds.
	Then the following statements hold.
	\begin{enumerate}
		\item[$\mathbf{(i)}$]
		For every $t^*\in U^*$, the sequence
		\begin{equation*}
			\Phi_k(t^*):=\beta\|t^k-t^*\|^2
			+\frac{1-\beta}{P_1}\|s^k-t^*\|^2
		\end{equation*}
		is bounded and converges almost surely.
		
		\item[$\mathbf{(ii)}$]
		The stepsizes are bounded away from zero almost surely: \begin{equation}\label{eq:vr-lambda-lower}
			\lambda_k\ge\underline{\lambda}>0,
			\qquad k\ge0,
		\end{equation}
		where
		\begin{equation}\label{eq:vr-lambda-lower-def}
			\underline{\lambda}=
			\begin{cases}
				\displaystyle
				\min\left\{\lambda_0,\,\lambda_1,\,
				\frac{\beta}{K_0+K_1H_1^\alpha+K_2(\beta w+\sqrt{\bar\lambda})^{\frac{\alpha}{1-\alpha}}},\,
				\frac{1}{H_1^2}\right\},
				& \alpha\in(0,1),\\[4mm]
				\displaystyle
				\min\left\{\lambda_0,\,\lambda_1,\,
				\frac{\beta}{(L_0+L_1H_1)
					e^{L_1(\beta w+\sqrt{\bar\lambda})}},\,
				\frac{1}{H_1^2}\right\},
				& \alpha=1.
			\end{cases}
		\end{equation}
		where $H_1:=\sup_{k\ge0}\max_{i\in[N]}\|F_i(s^k)\|<\infty$  almost surely, and $1/H_1^2=+\infty$ if $H_1=0$; any candidate with a zero denominator is interpreted as
		$+\infty$.

		\item[$\mathbf{(iii)}$]
		There exists a $U^*$-valued random variable $\bar t$ such that
		\begin{equation}\label{eq:vr-as-convergence}
			t^k\to\bar t,
			\qquad
			s^k\to\bar t
			\qquad\text{almost surely}.
		\end{equation}
	\end{enumerate}
\end{theorem}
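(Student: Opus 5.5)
Part (i) rests on a supermartingale inequality for $\Phi_k(t^*)$ to which Lemma~\ref{lem:rs} applies. First I compute the conditional expectation of the snapshot term. On $A_k$ we have $s^{k+1}=t^{k+1}$. Off $A_k$, $s^{k+1}$ equals $t^{k+1}$ with probability $P_1$ and $s^k$ otherwise. Conditioning on $\mathcal G_k$ therefore gives
\[
\mathbb E_k^+\|s^{k+1}-t^*\|^2\le P_1\|t^{k+1}-t^*\|^2+(1-P_1)\|s^k-t^*\|^2+\mathbf 1_{A_k}\|t^{k+1}-t^*\|^2 .
\]
Multiplying by $(1-\beta)/P_1$ and adding $\beta\|t^{k+1}-t^*\|^2$ yields
\[
\mathbb E_k\Phi_{k+1}\le \mathbb E_k\|t^{k+1}-t^*\|^2+\tfrac{(1-\beta)(1-P_1)}{P_1}\|s^k-t^*\|^2+\tfrac{1-\beta}{P_1}\mathbb E_k[\mathbf 1_{A_k}\|t^{k+1}-t^*\|^2].
\]
Next I substitute Lemma~\ref{lem42} for $\mathbb E_k\|t^{k+1}-t^*\|^2$. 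Its right-hand side contributes $\beta\|t^k-t^*\|^2+(1-\beta)\|s^k-t^*\|^2$. Adding the snapshot coefficient gives $(1-\beta)/P_1$ on $\|s^k-t^*\|^2$, so the main terms reproduce exactly $\Phi_k$. The negative terms $2\mu\lambda_k\operatorname{dist}^p(t^{k+\frac12},U^*)$, $(1-2\beta)\|s^k-t^{k+\frac12}\|^2$ and $\beta\|t^k-t^{k+\frac12}\|^2$ play the role of $z_k$ in Lemma~\ref{lem:rs}. Here $1-2\beta>0$ because $\beta<\tfrac12$.

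The error term $\bar M\,\mathbb E_k(\lambda_k-\lambda_{k+1})_+$ is summable in expectation by Lemma~\ref{lem41}, hence summable almost surely. The safeguard term is handled with \eqref{eq:Ak-lambda-bound}. I bound $\|t^{k+1}-t^*\|^2\le 2\|t^{k+1}-s^k\|^2+2\|s^k-t^*\|^2$, where $\|t^{k+1}-s^k\|$ is uniformly bounded by \eqref{eq:threshold-bound}. The resulting indicator term splits into $c\,\mathbf 1_{A_k}$ plus $c'\,\mathbb E_k[\mathbf 1_{A_k}]\,\Phi_k$. Both $\sum_k\mathbb E\mathbf 1_{A_k}\le C\sum_k\mathbb E(\lambda_k-\lambda_{k+1})_+<\infty$ and $\sum_k\mathbb E_k\mathbf 1_{A_k}<\infty$ a.s. then follow. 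So the recursion has the form $(1+a_k)\Phi_k-z_k+b_k$ with summable $a_k,b_k$, and Lemma~\ref{lem:rs} gives a.s.\ convergence (hence boundedness) of $\Phi_k(t^*)$ together with a.s.\ summability of the $z_k$ terms. The main obstacle I expect is this safeguard bookkeeping, in particular making sure $a_k$ is adapted and a.s.\ summable.

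For (ii), boundedness of $\Phi_k$ for one fixed $t^*$ implies that $\{s^k\}$ is bounded a.s. By continuity of the $F_i$, $H_1<\infty$. I then argue by induction as in Theorem~\ref{th61}(ii). The first candidate is at least $\lambda_k\ge\underline\lambda$, and the third is at least $1/H_1^2$. For the second candidate I apply Proposition~\ref{pro3.1} with $y'=s^k$, using $\|t^{k+\frac12}-s^k\|\le\beta w+\sqrt{\bar\lambda}$ from \eqref{eq:vr-prediction-gap} and $\|F_{\xi_k}(s^k)\|\le H_1$. This bounds the ratio $\|F_{\xi_k}(t^{k+\frac12})-F_{\xi_k}(s^k)\|/\|t^{k+\frac12}-s^k\|$ by the stated denominator, so the second candidate is at least the stated quantity.

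For (iii), I work on the probability-one event where (i) holds simultaneously for a countable dense subset of $U^*$; by continuity in $t^*$ this extends to all $t^*\in U^*$. There $\lambda_k\to\lambda_\infty\ge\underline\lambda>0$, so the summability of the $z_k$ terms gives $\|s^k-t^{k+\frac12}\|\to0$, $\|t^k-t^{k+\frac12}\|\to0$ and $\operatorname{dist}(t^{k+\frac12},U^*)\to0$. Consequently $\|t^k-s^k\|\to0$, and every accumulation point of $t^k$ lies in $U^*$, with $s^k$ sharing the same accumulation points. To conclude, take two cluster points $\bar t_1,\bar t_2\in U^*$ and use the existence of $\lim\Phi_k(t^*)$ for $t^*=\bar t_1,\bar t_2$ in an Opial-type argument. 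Since $\|t^k-s^k\|\to0$, $\Phi_k(t^*)$ behaves like $(\beta+\tfrac{1-\beta}{P_1})\|t^k-t^*\|^2$, so $\lim_k\|t^k-t^*\|$ exists for each $t^*\in U^*$. Expanding $\|t^k-\bar t_1\|^2-\|t^k-\bar t_2\|^2$ along the two subsequences then forces $\bar t_1=\bar t_2$, which gives $t^k\to\bar t$ and $s^k\to\bar t$ almost surely.
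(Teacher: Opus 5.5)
Your proposal is correct and follows essentially the same route as the paper. It uses a Robbins--Siegmund inequality for $\Phi_k(t^*)$ with the safeguard contributions controlled through $\mathbf 1_{A_k}\lesssim(\lambda_k-\lambda_{k+1})_+$, the same induction for $\underline{\lambda}$, and a countable-dense-subset Opial argument for uniqueness of the cluster point. The differences are cosmetic: your cruder snapshot bound gives $a_k=2\,\mathbb E_k\mathbf 1_{A_k}$ instead of $(1-P_1)\mathbb E_k\mathbf 1_{A_k}$, and the additive term should be written $c\,\mathbb E_k\mathbf 1_{A_k}$ rather than $c\,\mathbf 1_{A_k}$ so that it is $\mathcal F_k$-adapted. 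Your extension of convergence to all $t^*\in U^*$ is valid because $t^*\mapsto\sqrt{\Phi_k(t^*)}$ is Lipschitz uniformly in $k$; the paper avoids this step by letting $u_j^*\to u'$ directly in the limit identity.
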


\begin{proof}
	$\mathbf{(i)}$ Conditioning on $\mathcal G_k$, both $t^{k+1}$ and $A_k$ are
	measurable. If $A_k$ occurs, then $s^{k+1}=t^{k+1}$. If $A_k^c$
	occurs, then $s^{k+1}=t^{k+1}$ with probability $P_1$ and
	$s^{k+1}=s^k$ with probability $1-P_1$. Hence, for $k\ge1$,
	\begin{align}
		\mathbb E_k^+\|s^{k+1}-t^*\|^2
		=&\mathbf 1_{A_k}\|t^{k+1}-t^*\|^2
		+\mathbf 1_{A_k^c}
		\left[P_1\|t^{k+1}-t^*\|^2
		+(1-P_1)\|s^k-t^*\|^2\right].
		\label{eq:snapshot-cond-exp}
	\end{align}
	Rearranging \eqref{eq:snapshot-cond-exp} gives
	\begin{align}
		\frac{1-\beta}{P_1}
		\mathbb E_k^+\|s^{k+1}-t^*\|^2
		={}&(1-\beta)\|t^{k+1}-t^*\|^2
		+(1-\beta)\left(\frac1{P_1}-1\right)\|s^k-t^*\|^2
		\notag\\
		&+(1-\beta)
		\left(\frac1{P_1}-1\right)\mathbf 1_{A_k}
		\left(\|t^{k+1}-t^*\|^2-\|s^k-t^*\|^2\right).
		\label{eq:loopless-snapshot-identity}
	\end{align}
	
	We next bound the last term in
	\eqref{eq:loopless-snapshot-identity}. By the difference-of-squares
	identity and Young's inequality,
	\begin{align}
		\|t^{k+1}-t^*\|^2-\|s^k-t^*\|^2
		={}&2\langle s^k-t^*,t^{k+1}-s^k\rangle
		+\|t^{k+1}-s^k\|^2
		\notag\\
		\le{}&\|s^k-t^*\|^2+2\|t^{k+1}-s^k\|^2.
		\label{eq:snapshot-distance-difference}
	\end{align}
	Moreover, \eqref{eq:threshold-bound} and
	$(\lambda_k-\lambda_{k+1})_+\le\lambda_k\le\bar\lambda$ imply
	\begin{equation}\label{eq:tk1-sk-uniform}
		\|t^{k+1}-s^k\|\le(1+\beta)R+
		\sqrt{\frac{\beta\bar M\bar\lambda}{2}}
		=:\widehat C,
		\qquad k\ge1.
	\end{equation}
	Since
	\begin{equation*}
		\|s^k-t^*\|^2\le\frac{P_1}{1-\beta}\Phi_k(t^*),
	\end{equation*}
	it follows from \eqref{eq:snapshot-distance-difference} and
	\eqref{eq:tk1-sk-uniform} that
	\begin{align}
		&(1-\beta)
		\left(\frac1{P_1}-1\right)
		\mathbf 1_{A_k}
		\left(\|t^{k+1}-t^*\|^2-\|s^k-t^*\|^2\right)
		\notag\\
		&\qquad\le
		(1-P_1)\mathbf 1_{A_k}\Phi_k(t^*)
		+2(1-\beta)
		\left(\frac1{P_1}-1\right)
		\widehat C^2\mathbf 1_{A_k}.
		\label{eq:snapshot-extra-bound}
	\end{align}
	
	By the definition of $\Phi_{k+1}(t^*)$,
	\eqref{eq:loopless-snapshot-identity}, and
	\eqref{eq:snapshot-extra-bound},
	\begin{align}
		\mathbb E_k^+\Phi_{k+1}(t^*)
		={}&\beta\|t^{k+1}-t^*\|^2
		+\frac{1-\beta}{P_1}
		\mathbb E_k^+\|s^{k+1}-t^*\|^2
		\notag\\
		\le{}&\|t^{k+1}-t^*\|^2
		+(1-\beta)\left(\frac1{P_1}-1\right)\|s^k-t^*\|^2
		\notag\\
		&+(1-P_1)\mathbf 1_{A_k}\Phi_k(t^*)
		+2(1-\beta)\left(\frac1{P_1}-1\right)
\widehat C^2	\mathbf 1_{A_k}.
		\label{eq:Phi-Gk-bound}
	\end{align}
	Taking conditional expectation with respect to $\mathcal F_k$ and using
	the tower property, together with the $\mathcal F_k$-measurability of
	$\Phi_k(t^*)$, yields
	\begin{align}
		\mathbb E_k\Phi_{k+1}(t^*)
		\le{}&\mathbb E_k\|t^{k+1}-t^*\|^2
		+(1-\beta)\left(\frac1{P_1}-1\right)\|s^k-t^*\|^2
		\notag\\
		&+(1-P_1)\mathbb E_k\mathbf 1_{A_k}\Phi_k(t^*)
		+2(1-\beta)\left(\frac1{P_1}-1\right)
		\widehat C^2\mathbb E_k\mathbf 1_{A_k},
		\label{eq:Phi-Fk-bound}
	\end{align}
	Applying Lemma~\ref{lem42} to the first term on the right-hand side of
	\eqref{eq:Phi-Fk-bound}, and using
	\begin{equation*}
		(1-\beta)+(1-\beta)\left(\frac1{P_1}-1\right)
		=\frac{1-\beta}{P_1},
	\end{equation*}
	we obtain
	\begin{align}
		\mathbb E_k\Phi_{k+1}(t^*)
		\le{}&
		\left[1+(1-P_1)\mathbb E_k\mathbf 1_{A_k}\right]\Phi_k(t^*)
		-2\mu\lambda_k
		\operatorname{dist}^p(t^{k+\frac12},U^*)
		\notag\\
		&-(1-\beta)\mathbb E_k\|t^{k+1}-t^{k+\frac12}\|^2
		-(1-2\beta)\|s^k-t^{k+\frac12}\|^2
		-\beta\|t^k-t^{k+\frac12}\|^2
		\notag\\
		&+\bar M\mathbb E_k(\lambda_k-\lambda_{k+1})_+
		+2(1-\beta)\left(\frac1{P_1}-1\right)
	\widehat C^2\mathbb E_k\mathbf 1_{A_k},
		\qquad k\ge1
		\label{eq:vr-global-quasi-fejer},
	\end{align}
	
	It remains to verify the summability of the error terms in
	\eqref{eq:vr-global-quasi-fejer}. By the proof of
	Lemma~\ref{lem:finite-threshold}, \eqref{eq:Ak-lambda-bound} holds.
		Taking conditional expectation in \eqref{eq:Ak-lambda-bound} gives
	\begin{equation}\label{eq:Ak-conditional-bound}
		\mathbb E_k\mathbf 1_{A_k}
		\le\frac{\beta\bar M}{2[w-(1+\beta)R]^2}
		\mathbb E_k(\lambda_k-\lambda_{k+1})_+.
	\end{equation}
By Tonelli's theorem, the tower property, and Lemma~\ref{lem41}, we get
	\begin{align}
		\mathbb E
		\left[\sum_{k=1}^{\infty}
		\mathbb E_k(\lambda_k-\lambda_{k+1})_+\right]
		&=\sum_{k=1}^{\infty}
		\mathbb E(\lambda_k-\lambda_{k+1})_+<\infty.
		\label{eq:conditional-lambda-sum}
	\end{align}
	Therefore,
	\begin{equation*}\label{eq:conditional-lambda-summable}
		\sum_{k=1}^{\infty}
		\mathbb E_k(\lambda_k-\lambda_{k+1})_+
		<\infty
		\qquad\text{almost surely}.
	\end{equation*}
	Together with \eqref{eq:Ak-conditional-bound}, this further gives
	\begin{equation*}\label{eq:Ak-conditional-summable}
		\sum_{k=1}^{\infty}
		\mathbb E_k\mathbf 1_{A_k}<\infty
		\qquad\text{almost surely}.
	\end{equation*}
	Hence both the multiplicative error and the additive error in
	\eqref{eq:vr-global-quasi-fejer} are summable almost surely.
	
	Since $\beta\in\left(0,\frac12\right)$,
	all the descent terms in \eqref{eq:vr-global-quasi-fejer} are
	nonnegative. Applying Lemma~\ref{lem:rs} to the tail sequence
	$\{\Phi_k(t^*)\}_{k\ge1}$, we conclude that
	\begin{equation*}\label{eq:Phi-converges}
		\Phi_k(t^*)
		\quad\text{converges almost surely}.
	\end{equation*}
	Moreover, 
	\begin{equation}\label{eq:vr-tail-summability}
		\sum_{k=1}^{\infty}
		\lambda_k\operatorname{dist}^p(t^{k+\frac12},U^*)<\infty,
		\qquad
		\sum_{k=1}^{\infty}\|s^k-t^{k+\frac12}\|^2<\infty,
		\qquad
		\sum_{k=1}^{\infty}\|t^k-t^{k+\frac12}\|^2<\infty
	\end{equation}
	almost surely.
	
	Finally, convergence of $\Phi_k(t^*)$ implies that
	$\{\Phi_k(t^*)\}$ is bounded almost surely. Since
	$\beta>0$ and $(1-\beta)/P_1>0$, the definition of
	$\Phi_k(t^*)$ then implies that both $\{t^k\}$ and
	$\{s^k\}$ are bounded almost surely. This proves
	part~$\mathbf{(i)}$.

	\medskip
	\noindent\textbf{(ii)}
	We next establish a positive lower bound for the stepsizes. By
	part~$\mathbf{(i)}$, $\{s^k\}$ is bounded almost surely. Since
	$U$ is closed, $N<\infty$, and every $F_i$ is continuous, it follows
	that
	\begin{equation}\label{eq:H1-finite}
		H_1:=\sup_{k\ge0}
		\max_{i\in[N]}\|F_i(s^k)\|<\infty
		\qquad\text{almost surely}.
	\end{equation}
	
	Suppose first that $\alpha\in(0,1)$. By
	Assumption~\ref{ass4}, Proposition~\ref{pro3.1},
	\eqref{eq:H1-finite}, and \eqref{eq:vr-prediction-gap},
	\begin{align*}
		\|F_{\xi_k}(t^{k+\frac12})-F_{\xi_k}(s^k)\|
		&\le\|t^{k+\frac12}-s^k\|
		\left(K_0+K_1\|F_{\xi_k}(s^k)\|^\alpha+K_2
		\|t^{k+\frac12}-s^k\|^{\frac{\alpha}{1-\alpha}}
		\right)
		\notag\\
		&\le\|t^{k+\frac12}-s^k\|
		\left(K_0+K_1H_1^\alpha+K_2R^{\frac{\alpha}{1-\alpha}}\right),
		\qquad k\ge1.
		\label{eq:vr-component-lower-alpha}
	\end{align*}
	Hence, whenever
	$F_{\xi_k}(t^{k+\frac12})\ne F_{\xi_k}(s^k)$,
	the second candidate in \eqref{eq:aeg-vr-stepsize} satisfies
	\begin{equation}\label{eq:vr-local-candidate-alpha}
		\frac{\beta\|t^{k+\frac12}-s^k\|}
		{\|F_{\xi_k}(t^{k+\frac12})-F_{\xi_k}(s^k)\|}
		\ge\frac{\beta}{K_0+K_1H_1^\alpha+K_2R^{\frac{\alpha}{1-\alpha}}}.
	\end{equation}
	If the denominator is zero, the corresponding candidate is interpreted as
	$+\infty$, so the same lower bound remains valid.
	
	If $\alpha=1$, Proposition~\ref{pro3.1},
	\eqref{eq:H1-finite}, and \eqref{eq:vr-prediction-gap} give
	\begin{align*}
		\|F_{\xi_k}(t^{k+\frac12})-F_{\xi_k}(s^k)\|
		&\le\|t^{k+\frac12}-s^k\|
		\bigl(L_0+L_1\|F_{\xi_k}(s^k)\|\bigr)
		\exp\left(L_1\|t^{k+\frac12}-s^k\|\right)
		\notag\\
		&\le\|t^{k+\frac12}-s^k\|(L_0+L_1H_1)e^{L_1R},
		\qquad k\ge1.
		\label{eq:vr-component-lower-one}
	\end{align*}
	Therefore, whenever the denominator is nonzero,
	\begin{equation}\label{eq:vr-local-candidate-one}
		\frac{\beta\|t^{k+\frac12}-s^k\|}
		{\|F_{\xi_k}(t^{k+\frac12})-F_{\xi_k}(s^k)\|}
		\ge\frac{\beta}{(L_0+L_1H_1)e^{L_1R}}.
	\end{equation}

	By \eqref{eq:H1-finite},
	\begin{equation*}
		\underline{\lambda}>0
		\qquad\text{almost surely}.
	\end{equation*}
	Moreover, $\lambda_0\ge\underline{\lambda}$ and $\lambda_1\ge\underline{\lambda}$.	Suppose that
	$\lambda_k\ge\underline{\lambda}$ for some $k\ge1$.
	Then the first candidate in \eqref{eq:aeg-vr-stepsize} satisfies $(1+\eta_k)\lambda_k\ge\lambda_k\ge\underline{\lambda}$.
	By \eqref{eq:vr-local-candidate-alpha} or
	\eqref{eq:vr-local-candidate-one}, the second candidate is also no smaller
	than $\underline{\lambda}$. Finally,
	\begin{equation*}
		\frac{1}{\max_{i\in[N]}\|F_i(s^{k+1})\|^2}
		\ge\frac1{H_1^2}
		\ge\underline{\lambda}.
	\end{equation*}
	Thus every candidate in \eqref{eq:aeg-vr-stepsize} is bounded below by
	$\underline{\lambda}$, and hence $\lambda_{k+1}\ge\underline{\lambda}$.
	By induction, \eqref{eq:vr-lambda-lower} holds almost surely. This proves part~$\mathbf{(ii)}$.

	\medskip
	\noindent\textbf{(iii)}
	We finally prove the convergence of the iterates. By
	\eqref{eq:vr-tail-summability} and
	\eqref{eq:vr-lambda-lower},
	\begin{equation*}
		\sum_{k=1}^{\infty}
		\operatorname{dist}^p(t^{k+\frac12},U^*)
		\le\frac1{\underline{\lambda}}
		\sum_{k=1}^{\infty}
		\lambda_k\operatorname{dist}^p(t^{k+\frac12},U^*)
		<\infty
	\end{equation*}
	almost surely. Hence
	\begin{equation}\label{eq:vr-residual-limits}
		\operatorname{dist}(t^{k+\frac12},U^*)\to0,
		\qquad
		\|s^k-t^{k+\frac12}\|\to0,
		\qquad
		\|t^k-t^{k+\frac12}\|\to0
	\end{equation}
	almost surely, where the last two limits follow directly from
	\eqref{eq:vr-tail-summability}. Consequently,
	\begin{equation}\label{eq:tk-sk-zero}
		\|t^k-s^k\|\le\|t^k-t^{k+\frac12}\|
		+\|t^{k+\frac12}-s^k\|\to0
		\qquad\text{almost surely}.
	\end{equation}
	
	We next show that every accumulation point of $\{t^k\}$ belongs to
	$U^*$. Fix a sample path on which the preceding properties hold.
	By part~$\mathbf{(i)}$, $\{t^k\}$ is bounded. Let
	$\{t^{k_j}\}$ be a convergent subsequence such that $t^{k_j}\to u$. 
	By \eqref{eq:vr-residual-limits}, $\|t^{k_j+\frac12}-t^{k_j}\|\to0$, and hence $t^{k_j+\frac12}\to u$.	Since the distance function is continuous,
	\eqref{eq:vr-residual-limits} yields
	\begin{equation*}
		\operatorname{dist}(u,U^*)
		=\lim_{j\to\infty}
		\operatorname{dist}(t^{k_j+\frac12},U^*)=0.
	\end{equation*}
	Since $U^*$ is closed, $u\in U^*$. Thus every accumulation point of
	$\{t^k\}$ belongs to $U^*$.
	
	It remains to show that the accumulation point is unique. Since
	$U^*\subseteq\mathbb R^m$ is separable, let
	$\{u_j^*\}_{j\ge1}$ be a countable dense subset of  $U^*$.
	By part~$\mathbf{(i)}$, for every $j\ge1$,
	$\Phi_k(u_j^*)$ converges almost surely. Hence there exists an event of
	probability one on which
	\begin{equation*}\label{eq:Phi-dense-convergence}
		\Phi_k(u_j^*)
		\quad\text{converges for every }j\ge1.
	\end{equation*}
	Fix a sample path in this event, and let $u',u''\in U^*$ be two
	accumulation points of $\{t^k\}$. Then there exist subsequences such that
	\begin{equation*}
		t^{k_\ell}\to u',
		\qquad
		t^{m_\ell}\to u''.
	\end{equation*}
	By \eqref{eq:tk-sk-zero}, we have
	\begin{equation*}
		s^{k_\ell}\to u',
		\qquad
		s^{m_\ell}\to u''.
	\end{equation*}
	For every $j\ge1$, the convergence of
	$\Phi_k(u_j^*)$ implies that its limits along these two subsequences
	coincide. Therefore,
	\begin{equation}\label{eq:cluster-equality}
		\left(\beta+\frac{1-\beta}{P_1}\right)\|u'-u_j^*\|^2
		=\left(\beta+\frac{1-\beta}{P_1}\right)\|u''-u_j^*\|^2,
		\qquad j\ge1.
	\end{equation}
	By density, there exists a subsequence
	$\{u_{j_\ell}^*\}$ such that $u_{j_\ell}^*\to u'$. Passing to the limit in \eqref{eq:cluster-equality} along this subsequence
	gives
	\begin{equation*}
		\left(\beta+\frac{1-\beta}{P_1}\right)\|u''-u'\|^2=0.
	\end{equation*}
	Since $\beta+\frac{1-\beta}{P_1}>0$, we obtain $u''=u'$. Thus $\{t^k\}$ has a unique accumulation point.	Since it is bounded, there exists $\bar t\in U^*$ such that
	\begin{equation*}
		t^k\to\bar t
		\qquad\text{almost surely}.
	\end{equation*}
	Finally, \eqref{eq:tk-sk-zero} implies
	\begin{equation*}
		s^k\to\bar t
		\qquad\text{almost surely}.
	\end{equation*}
	This proves \eqref{eq:vr-as-convergence} and part~$\mathbf{(iii)}$.
\end{proof}

\begin{lemma}[Eventual stability]\label{lem:vr-eventual-stability}
	Under the assumptions of Theorem~\ref{th41}, there exists an almost surely
	finite random index $K$ such that
	\begin{equation}\label{eq:vr-eventual-stability}
		\|t^{k+1}-s^k\|\le w,
		\qquad
		1-\beta-\beta\frac{\lambda_k^2}{\lambda_{k+1}^2}
		\ge\frac{1-2\beta}{2},
		\qquad k\ge K,
	\end{equation}
	almost surely. Moreover,
	\begin{equation}\label{eq:vr-prediction-distance-bound}
		\sup_{k\ge0}\operatorname{dist}(t^{k+\frac12},U^*)<\infty
		\qquad\text{almost surely}.
	\end{equation}
\end{lemma}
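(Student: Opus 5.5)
The proof combines three almost-sure facts already established and takes $K$ to be the maximum of the corresponding random indices.

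\emph{First condition.} By Lemma~\ref{lem:finite-threshold}, under \eqref{eq:w-condition} the events $A_k=\{\|t^{k+1}-s^k\|>w\}$ occur only finitely often almost surely. Hence there is an a.s.\ finite random index $K_1$ with $\|t^{k+1}-s^k\|\le w$ for all $k\ge K_1$. One could write $K_1:=1+\sup\{k:\mathbf 1_{A_k}=1\}$, with the convention $\sup\emptyset=0$. This is a random variable because each $A_k$ is measurable.

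\emph{Second condition.} By Lemma~\ref{lem41}, $\lambda_k\to\lambda_\infty$ almost surely. By Theorem~\ref{th41}(ii), $\lambda_k\ge\underline{\lambda}>0$ almost surely, so $\lambda_\infty\ge\underline\lambda>0$. Therefore $\lambda_k/\lambda_{k+1}\to1$ almost surely, and
\begin{equation*}
1-\beta-\beta\frac{\lambda_k^2}{\lambda_{k+1}^2}\to1-2\beta>\frac{1-2\beta}{2},
\end{equation*}
where the strict inequality uses $\beta<\tfrac12$. So there is an a.s.\ finite $K_2$ beyond which the second inequality in \eqref{eq:vr-eventual-stability} holds. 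Concretely, take $K_2$ to be one plus the last index at which the inequality fails. Setting $K:=\max\{K_1,K_2\}$, which is a.s.\ finite, gives \eqref{eq:vr-eventual-stability} on the intersection of the probability-one events above. Measurability of $K$ holds because it is defined through countably many measurable events.

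\emph{Bound \eqref{eq:vr-prediction-distance-bound}.} I see two routes.
\begin{itemize}
\item From Theorem~\ref{th41}(iii), $\operatorname{dist}(t^{k+\frac12},U^*)\to0$ almost surely by \eqref{eq:vr-residual-limits}. A convergent real sequence is bounded, so the supremum is finite on that event.
\item Alternatively, fix any $t^*\in U^*$ and use $\operatorname{dist}(t^{k+\frac12},U^*)\le\|t^{k+\frac12}-s^k\|+\|s^k-t^*\|\le R+\|s^k-t^*\|$ by \eqref{eq:vr-prediction-gap}, valid for $k\ge1$. The term $\|s^k-t^*\|$ is a.s.\ bounded by Theorem~\ref{th41}(i), and the index $k=0$ contributes only a single finite term.
\end{itemize}

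\emph{Main obstacle.} The main thing to handle carefully is that all statements live on a single probability-one event. I would intersect the countably many a.s.\ events used: finiteness of the $A_k$ occurrences, convergence of $\lambda_k$, positivity of $\underline\lambda$, and the residual limits. On that intersection the argument above is purely deterministic.
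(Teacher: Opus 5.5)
Your proposal is correct and follows the paper's proof: $K_1$ comes from Lemma~\ref{lem:finite-threshold}, $K_2$ from $\lambda_k\to\lambda_\infty\ge\underline\lambda>0$, and $K=\max\{K_1,K_2\}$. Your second route for \eqref{eq:vr-prediction-distance-bound} is the paper's argument, except that you use a fixed $t^*\in U^*$ with boundedness from Theorem~\ref{th41}(i), where the paper uses the random limit $\bar t$ from Theorem~\ref{th41}(iii); your first route, via $\operatorname{dist}(t^{k+\frac12},U^*)\to0$ from \eqref{eq:vr-residual-limits} (established in the proof of Theorem~\ref{th41}(iii)), is an equally valid and shorter alternative.
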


\begin{proof}
	By Lemma~\ref{lem:finite-threshold}, the event $A_k$ occurs only finitely many times almost surely. Hence there exists an almost surely finite random index $K_1$ such that
	\begin{equation*}\label{eq:eventual-snapshot-gap}
		\|t^{k+1}-s^k\|\le w,
		\qquad k\ge K_1,
	\end{equation*}
	almost surely. By Lemma~\ref{lem41} and Theorem~\ref{th41}(ii), the stepsize sequence converges almost surely: $\lambda_k\to\lambda_\infty\ge	\underline{\lambda}>0$.	It follows that $\frac{\lambda_k}{\lambda_{k+1}}\to1$ almost surely. Consequently,
	\begin{equation*}
		1-\beta-\beta\frac{\lambda_k^2}{\lambda_{k+1}^2}
		\to1-2\beta
		\qquad\text{almost surely}.
	\end{equation*}
	Since $\beta\in(0,\frac12)$, we have $1-2\beta>0$. Thus there exists
	an almost surely finite random index $K_2$ such that
	\begin{equation*}\label{eq:eventual-ratio-bound}
		1-\beta-\beta\frac{\lambda_k^2}{\lambda_{k+1}^2}
		\ge\frac{1-2\beta}{2},
		\qquad k\ge K_2,
	\end{equation*}
	almost surely. Taking $K=\max\{K_1,K_2\}$ gives \eqref{eq:vr-eventual-stability}.
	
	It remains to prove \eqref{eq:vr-prediction-distance-bound}.
	By Theorem~\ref{th41}(iii), there exists
	$\bar t\in U^*$ such that $s^k\to\bar t$ almost surely.	Hence
	\begin{equation*}
		\sup_{k\ge0}\|s^k-\bar t\|<\infty
		\qquad\text{almost surely}.
	\end{equation*}
	For every $k\ge1$, \eqref{eq:vr-prediction-gap} gives
	\begin{align*}
		\operatorname{dist}(t^{k+\frac12},U^*)
		&\le\|t^{k+\frac12}-\bar t\|
		\le\|t^{k+\frac12}-s^k\|+\|s^k-\bar t\|
		\le R+\|s^k-\bar t\|.
	\end{align*}
	Therefore,
	\begin{equation*}
		\sup_{k\ge1}\operatorname{dist}(t^{k+\frac12},U^*)<\infty
		\qquad\text{almost surely}.
	\end{equation*}
	Since $t^{1/2}$ is finite, \eqref{eq:vr-prediction-distance-bound}
	follows.
\end{proof}

For the expected-rate analysis, we strengthen the almost-sure bounds obtained
in Theorem~\ref{th41}(ii) and Lemma~\ref{lem:vr-eventual-stability} by
requiring them to hold uniformly with deterministic constants. Specifically,
assume that there exist $k_{\rm r}\ge1$ and $\lambda_{\min}>0$ such that
\begin{equation}\label{eq:vr-rate-uniformity}
	K\le k_{\rm r},
	\qquad
	\underline{\lambda}\ge\lambda_{\min}
	\qquad\text{almost surely}.
\end{equation}
When $0<p<2$, assume additionally that there exists a deterministic
constant $B>0$ such that
\begin{equation}\label{eq:vr-uniform-B}
	\sup_{k\ge0}\operatorname{dist}(t^{k+\frac12},U^*)\le B
	\qquad\text{almost surely}.
\end{equation}
No additional boundedness condition is needed when $p=2$.

\begin{theorem}\label{th43}
	Suppose the assumptions of Theorem~\ref{th41} and
	\eqref{eq:vr-rate-uniformity} hold.
	\begin{enumerate}
		\item[$\mathbf{(i)}$]
		If $0<p\le2$, and \eqref{eq:vr-uniform-B} additionally holds when
		$0<p<2$, then there exist $c>0$ such that
		\begin{equation}\label{eq:vr-linear-rate}
			\mathbb E\left[\inf_{u^*\in U^*}\Phi_k(u^*)\right]
			\le c\rho^k,
			\qquad k\ge0,
		\end{equation}
		where
		\begin{equation}\label{eq:vr-rho}
			\rho:=1-
			\min\left\{\frac12,\,
			\frac{(1-2\beta)P_1}{4(1-\beta)},\,
			\frac{\mu\lambda_{\min}}{\left(\beta+\frac{1-\beta}{P_1}\right)B^{2-p}}
			\right\}\in(0,1).
		\end{equation}
		
		\item[$\mathbf{(ii)}$]
		If $p>2$, then there exists $c_p>0$ such that
		\begin{equation}\label{eq:vr-sublinear-rate}
			\mathbb E\left[\inf_{u^*\in U^*}\Phi_k(u^*)\right]
			\le\frac{c_p}{(k+1)^{2/(p-2)}},
			\qquad k\ge0.
		\end{equation}
	\end{enumerate}
\end{theorem}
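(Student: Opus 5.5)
The plan is to derive, for $k\ge k_{\rm r}$, a one-step contraction of the potential $\Phi_k$ evaluated at a $\mathcal F_k$-measurable nearest solution. This mimics the proof of Theorem~\ref{th33}, but is driven by the recursion \eqref{eq:vr-global-quasi-fejer}.

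\textbf{Step 1: a clean recursion for $k\ge k_{\rm r}$.} Redo the derivation of Lemma~\ref{lem42} without splitting off the $(\lambda_k-\lambda_{k+1})_+$ term. Use the second candidate of \eqref{eq:aeg-vr-stepsize} directly:
\[
\tfrac{\lambda_k^2}{\beta}\|F_{\xi_k}(t^{k+\frac12})-F_{\xi_k}(s^k)\|^2\le \beta\tfrac{\lambda_k^2}{\lambda_{k+1}^2}\|t^{k+\frac12}-s^k\|^2 .
\]
Combined with \eqref{eq:vr-prelocal}, this makes the coefficient of $-\|s^k-t^{k+\frac12}\|^2$ equal to $1-\beta-\beta\lambda_k^2/\lambda_{k+1}^2\ge\frac{1-2\beta}{2}$ for $k\ge K$. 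Next, since $K\le k_{\rm r}$, the event $A_k$ does not occur, so the $\mathbf 1_{A_k}$ terms in \eqref{eq:Phi-Fk-bound} vanish. Together with $\lambda_k\ge\lambda_{\min}$, this gives for $k\ge k_{\rm r}$ and every $t^*\in U^*$:
\[
\mathbb E_k\Phi_{k+1}(t^*)\le\Phi_k(t^*)-2\mu\lambda_{\min}\operatorname{dist}^p(t^{k+\frac12},U^*)-\tfrac{1-2\beta}{2}\|s^k-t^{k+\frac12}\|^2-\beta\|t^k-t^{k+\frac12}\|^2 .
\]
Take $t^*=u_k^*$, a measurable selection of a point of $U^*$ nearest to $t^{k+\frac12}$. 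It is $\mathcal F_k$-measurable because $t^{k+\frac12}$ is, and $U^*$ is closed. Then bound the left side from below by $\mathbb E_k[\inf_{u^*}\Phi_{k+1}(u^*)]$.

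\textbf{Step 2: lower-bound the decrease by a multiple of $\Phi_k(u_k^*)$.} Write $d=\operatorname{dist}(t^{k+\frac12},U^*)=\|t^{k+\frac12}-u_k^*\|$, $a=\|t^k-t^{k+\frac12}\|$ and $b=\|s^k-t^{k+\frac12}\|$. Then
\[
\Phi_k(u_k^*)\le 2\beta a^2+\tfrac{2(1-\beta)}{P_1}b^2+2\bigl(\beta+\tfrac{1-\beta}{P_1}\bigr)d^2 .
\]
For $p=2$, the decrease dominates $(1-\rho)\Phi_k(u_k^*)$ by term-by-term comparison, using the three entries in the definition of $\rho$. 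The first entry handles $a$, the second handles $b$, and the third handles $d$ with $B^{0}=1$, so no $B$ is needed. For $p<2$, use $d^2\le B^{2-p}d^p$ from \eqref{eq:vr-uniform-B}. Since $\inf_{u^*}\Phi_k(u^*)\le\Phi_k(u_k^*)$ but the inequality points the wrong way, I would instead note that $\Phi_k(u_k^*)-(1-\rho)\Phi_k(u_k^*)$ is what appears on the right. So Step 1 must be applied with $t^*=u_k^*$, and what we need is an inequality of the form $\mathbb E_k\inf\Phi_{k+1}\le\rho\,\Phi_k(u_k^*)$.

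\textbf{Step 3: closing the loop (the main obstacle).} Recursing requires comparing $\Phi_k(u_k^*)$ with $\inf_{u^*}\Phi_k(u^*)$. These differ because $u_k^*$ is nearest to $t^{k+\frac12}$ rather than being the minimizer of $\Phi_k$. My first attempt would be to choose instead $t^*=v_k^*\in\arg\min_{u^*\in U^*}\Phi_k(u^*)$, which exists because $U^*$ is closed and $\Phi_k$ is coercive, and is measurably selectable. The recursion of Step 1 then holds with $\Phi_k(v_k^*)=\inf\Phi_k$. Next, bound $\Phi_k(v_k^*)\le\Phi_k(u_k^*)$, which is expressed through $a,b,d$ as above. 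The quasi-sharpness term $\operatorname{dist}^p(t^{k+\frac12},U^*)$ in Step 1 does not depend on $t^*$, so the term-by-term comparison of Step 2 still yields $\mathbb E_k\inf\Phi_{k+1}\le\rho\inf\Phi_k$ for $k\ge k_{\rm r}$.

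\textbf{Step 4: conclusion.} Take full expectations and iterate to get the bound for $k\ge k_{\rm r}$. The initial indices $k<k_{\rm r}$ are absorbed into $c$. Their expectations are finite because $\|t^{k+1}-s^k\|\le\widehat C$ by \eqref{eq:tk1-sk-uniform}, which deterministically bounds the first finitely many iterates.

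For $p>2$, the same steps give $\mathbb E_k\inf\Phi_{k+1}\le\inf\Phi_k-c'(\inf\Phi_k)^{p/2}$, with $c'$ depending on $\mu\lambda_{\min}$, $\beta$, $P_1$ and $\bar\lambda,w$. Here $a,b$ are bounded by $R$-type constants, so $a^p\lesssim a^2$, exactly as in Theorem~\ref{th33}(ii). Applying Jensen ($x\mapsto x^{p/2}$ is convex) gives $e_{k+1}\le e_k-c'e_k^{p/2}$ for $e_k=\mathbb E\inf\Phi_k$. Lemma~\ref{lem25} with $q=(p-2)/2$ then gives the rate, and the early indices are again absorbed into $c_p$.
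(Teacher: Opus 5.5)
Your proposal is correct and follows the paper's proof. The paper also substitutes the second stepsize candidate directly into \eqref{eq:vr-prelocal} and uses the absence of safeguard refreshes for $k\ge k_{\rm r}$ to obtain the clean recursion for every $u^*\in U^*$. It then passes to the infimum via $\mathbb E_k[\inf\Phi_{k+1}]\le\inf\mathbb E_k[\Phi_{k+1}]$ (your minimizer $v_k^*$ plays the same role) and compares $\inf\Phi_k\le\Phi_k(u_k^*)$ term by term against the three entries of $\rho$. For $p>2$ it likewise uses the deterministic bounds on $\|t^k-t^{k+\frac12}\|$ and $\|s^k-t^{k+\frac12}\|$, Jensen, and Lemma~\ref{lem25}.

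The ``obstacle'' you raise in Step~3 is therefore not a real one, since the decrease terms do not depend on $u^*$. Your measurable selection and your check that the early expectations are finite are small rigor points the paper leaves implicit.
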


\begin{proof}
	By \eqref{eq:vr-rate-uniformity},
	Theorem~\ref{th41}(ii), and
	Lemma~\ref{lem:vr-eventual-stability}, for every
	$k\ge k_{\rm r}$, almost surely,
	\begin{equation}\label{eq:vr-rate-tail-bounds}
		\lambda_k\ge\lambda_{\min},
		\qquad
		\|t^{k+1}-s^k\|\le w,
		\qquad
		1-\beta-\beta\frac{\lambda_k^2}{\lambda_{k+1}^2}
		\ge\frac{1-2\beta}{2}.
	\end{equation}
	Fix $k\ge k_{\rm r}$. The second inequality in
	\eqref{eq:vr-rate-tail-bounds} implies that the snapshot is updated
	according to the Bernoulli rule. Hence, for every $u^*\in U^*$,
	\begin{equation}\label{eq:vr-rate-snapshot}
		\frac{1-\beta}{P_1}
		\mathbb E_k^+\|s^{k+1}-u^*\|^2
		=(1-\beta)\|t^{k+1}-u^*\|^2
		+(1-\beta)\left(\frac1{P_1}-1\right)\|s^k-u^*\|^2.
	\end{equation}
	By the second candidate in \eqref{eq:aeg-vr-stepsize},
	\begin{equation}\label{eq:vr-rate-local-control}
		\frac{\lambda_k^2}{\beta}
		\|F_{\xi_k}(t^{k+\frac12})-F_{\xi_k}(s^k)\|^2
		\le\beta\frac{\lambda_k^2}{\lambda_{k+1}^2}
		\|t^{k+\frac12}-s^k\|^2.
	\end{equation}
	If the operator difference vanishes, this inequality holds trivially.
	Substituting \eqref{eq:vr-rate-local-control} into
	\eqref{eq:vr-prelocal} gives
	\begin{align}
		\|t^{k+1}-u^*\|^2
		\le{}&
		\beta\|t^k-u^*\|^2
		+(1-\beta)\|s^k-u^*\|^2
		-2\lambda_k
		\langle t^{k+\frac12}-u^*,\widehat F_k\rangle
		\notag\\
		&-(1-\beta)\|t^{k+1}-t^{k+\frac12}\|^2
		-\left(1-\beta-\beta\frac{\lambda_k^2}{\lambda_{k+1}^2}\right)
		\|s^k-t^{k+\frac12}\|^2
		\notag\\
		&-\beta\|t^k-t^{k+\frac12}\|^2.
		\label{eq:vr-rate-pre}
	\end{align}
	Using \eqref{eq:vr-rate-tail-bounds} in
	\eqref{eq:vr-rate-pre} and dropping the nonpositive term involving
	$\|t^{k+1}-t^{k+\frac12}\|^2$, we obtain
	\begin{align*}
		\|t^{k+1}-u^*\|^2
		\le{}&
		\beta\|t^k-u^*\|^2
		+(1-\beta)\|s^k-u^*\|^2
		-2\lambda_k
		\langle t^{k+\frac12}-u^*,\widehat F_k\rangle
		\notag\\
		&-
		\frac{1-2\beta}{2}
		\|s^k-t^{k+\frac12}\|^2
		-\beta\|t^k-t^{k+\frac12}\|^2.
	\end{align*}
	Taking conditional expectation with respect to $\mathcal F_k$, using
	\eqref{eq:svrg-unbiased}, $p$-quasi-sharpness, and
	$\lambda_k\ge\lambda_{\min}$, gives
	\begin{align}
		\mathbb E_k\|t^{k+1}-u^*\|^2
		\le{}&
		\beta\|t^k-u^*\|^2
		+(1-\beta)\|s^k-u^*\|^2
	-2\mu\lambda_{\min}
		\operatorname{dist}^p(t^{k+\frac12},U^*)
		\notag\\
		&-\frac{1-2\beta}{2}
		\|s^k-t^{k+\frac12}\|^2
		-\beta\|t^k-t^{k+\frac12}\|^2.
		\label{eq:vr-rate-t}
	\end{align}
	Taking conditional expectation with respect to $\mathcal F_k$ in
	\eqref{eq:vr-rate-snapshot} and using the tower property, we have
	\begin{align*}
		&\mathbb E_k
		\left[\beta\|t^{k+1}-u^*\|^2
		+\frac{1-\beta}{P_1}
		\|s^{k+1}-u^*\|^2\right]
		=\mathbb E_k\|t^{k+1}-u^*\|^2
		+(1-\beta)\left(\frac1{P_1}-1\right)
		\|s^k-u^*\|^2.
	\end{align*}
	Combining this identity with \eqref{eq:vr-rate-t} yields
	\begin{align*}
		\mathbb E_k[\Phi_{k+1}(u^*)]
		\le&
		\Phi_k(u^*)
		-2\mu\lambda_{\min}
		\operatorname{dist}^p(t^{k+\frac12},U^*)
		-\frac{1-2\beta}{2}
		\|s^k-t^{k+\frac12}\|^2
		-\beta\|t^k-t^{k+\frac12}\|^2.
	\end{align*}
	Since the preceding inequality holds for every $u^*\in U^*$, and
	\begin{equation*}
		\mathbb E_k\left[
		\inf_{u^*\in U^*}\Phi_{k+1}(u^*)\right]
		\le\inf_{u^*\in U^*}
		\mathbb E_k[\Phi_{k+1}(u^*)],
	\end{equation*}
	we obtain
	\begin{align}
		\mathbb E_k
		\left[\inf_{u^*\in U^*}\Phi_{k+1}(u^*)\right]
		\le{}&
		\inf_{u^*\in U^*}\Phi_k(u^*)
		-2\mu\lambda_{\min}
		\operatorname{dist}^p(t^{k+\frac12},U^*)
		\notag\\
		&-\frac{1-2\beta}{2}
		\|s^k-t^{k+\frac12}\|^2
		-\beta\|t^k-t^{k+\frac12}\|^2,
		\qquad k\ge k_{\rm r}.
		\label{eq:vr-rate-descent}
	\end{align}
	
	\medskip
	\noindent\textbf{(i)}
	Suppose $0<p\le2$. For each $k\ge k_{\rm r}$, let
	$u_k^*\in U^*$ satisfy $\|t^{k+\frac12}-u_k^*\|=\operatorname{dist}(t^{k+\frac12},U^*)$. 	By the definition of $\Phi_k$ and	$\|a+b\|^2\le2\|a\|^2+2\|b\|^2$,
	\begin{align}
		\inf_{u^*\in U^*}\Phi_k(u^*)
		\le{}&
		2\beta\|t^k-t^{k+\frac12}\|^2
		+\frac{2(1-\beta)}{P_1}
		\|s^k-t^{k+\frac12}\|^2
		\notag\\
		&+2\left(\beta+\frac{1-\beta}{P_1}\right)
		\operatorname{dist}^2(t^{k+\frac12},U^*).
		\label{eq:vr-linear-distance-bound}
	\end{align}
	If $0<p<2$, \eqref{eq:vr-uniform-B} implies
	\begin{equation*}
		\operatorname{dist}^2(t^{k+\frac12},U^*)
		\le B^{2-p}
		\operatorname{dist}^p(t^{k+\frac12},U^*).
	\end{equation*}
	For $p=2$, the same relation holds with $B^{2-p}=1$. 
	Combining the definition of $\rho$ in \eqref{eq:vr-rho} and
	\eqref{eq:vr-linear-distance-bound} gives
	\begin{align*}
		(1-\rho)
		\inf_{u^*\in U^*}\Phi_k(u^*)
		\le{}&
		\beta\|t^k-t^{k+\frac12}\|^2
		+\frac{1-2\beta}{2}
		\|s^k-t^{k+\frac12}\|^2
		+2\mu\lambda_{\min}
		\operatorname{dist}^p(t^{k+\frac12},U^*),
	\end{align*}
	which together with
	\eqref{eq:vr-rate-descent} yields
	\begin{equation*}\label{eq:vr-linear-contraction}
		\mathbb E_k
		\left[\inf_{u^*\in U^*}\Phi_{k+1}(u^*)\right]
		\le\rho\inf_{u^*\in U^*}\Phi_k(u^*),
		\qquad k\ge k_{\rm r}.
	\end{equation*}
	Taking total expectations and iterating gives
	\begin{equation*}\label{eq:vr-linear-tail}
		\mathbb E
		\left[\inf_{u^*\in U^*}\Phi_k(u^*)\right]
		\le\rho^{k-k_{\rm r}}
		\mathbb E
		\left[\inf_{u^*\in U^*}\Phi_{k_{\rm r}}(u^*)\right],
		\qquad k\ge k_{\rm r}.
	\end{equation*}
	Hence \eqref{eq:vr-linear-rate} holds for every $k\ge0$ by taking
	\begin{equation*}
		c:=\max_{0\le j\le k_{\rm r}}
		\rho^{-j}
		\mathbb E
		\left[\inf_{u^*\in U^*}\Phi_j(u^*)\right].
	\end{equation*}

	\medskip
	\noindent\textbf{(ii)}
	Suppose $p>2$. 
	Using the same argument as in \eqref{eq:vr-prediction-gap}, we obtain
	\begin{align*}
		\|t^k-t^{k+\frac12}\|
		&=\left\|P_U(t^k)
		-P_U\!\left(\bar t^k-\lambda_kF(s^k)\right)\right\|\\
		&\le(1-\beta)\|t^k-s^k\|+\lambda_k\|F(s^k)\|\\
		&\le(1-\beta)w+\sqrt{\bar\lambda},
		\qquad k\ge1,
	\end{align*}
	which together with \eqref{eq:vr-prediction-gap} gives
	\begin{equation}\label{eq:vr-weighted-gap-bound}
		\beta\|t^k-t^{k+\frac12}\|^2
		+\frac{1-\beta}{P_1}
		\|s^k-t^{k+\frac12}\|^2
		\le	R_0^2,
		\qquad k\ge k_{\rm r},
	\end{equation}
	where
		\begin{equation*}
		R_0^2:=\beta
		\left[(1-\beta)w+\sqrt{\bar\lambda}\right]^2
		+\frac{1-\beta}{P_1}R^2.
	\end{equation*}
	Let $u_k^*\in U^*$ be a nearest point to
	$t^{k+\frac12}$. By the triangle inequality,
	\begin{align*}
		\Phi_k(u_k^*)^{1/2}
		\le{}&
		\left[\beta\|t^k-t^{k+\frac12}\|^2
		+\frac{1-\beta}{P_1}
		\|s^k-t^{k+\frac12}\|^2\right]^{1/2}+
		\left(\beta+\frac{1-\beta}{P_1}\right)^{1/2}
		\operatorname{dist}(t^{k+\frac12},U^*).
	\end{align*}
	Using $(a+b)^p\le2^{p-1}(a^p+b^p)$ and
	\eqref{eq:vr-weighted-gap-bound}, we obtain
	\begin{align*}
		\left(\inf_{u^*\in U^*}\Phi_k(u^*)\right)^{p/2}
		\le{}&
		2^{p-1}R_0^{p-2}
		\left[\beta\|t^k-t^{k+\frac12}\|^2
		+\frac{1-\beta}{P_1}\|s^k-t^{k+\frac12}\|^2\right]
		\notag\\
		&+
		2^{p-1}\left(\beta+\frac{1-\beta}{P_1}\right)^{p/2}
		\operatorname{dist}^p(t^{k+\frac12},U^*),
	\end{align*}
	which implies
	\begin{align*}
		a_p\left(\inf_{u^*\in U^*}\Phi_k(u^*)\right)^{p/2}
		\le{}&
		\beta\|t^k-t^{k+\frac12}\|^2
		+\frac{1-2\beta}{2}
		\|s^k-t^{k+\frac12}\|^2
		+2\mu\lambda_{\min}
		\operatorname{dist}^p(t^{k+\frac12},U^*),
		\label{eq:vr-sublinear-control}
	\end{align*}
	where 
	\begin{equation*}\label{eq:vr-ap}
		a_p:=
		\min\left\{
		2^{1-p}R_0^{2-p}
		\min\left\{1,\,
		\frac{(1-2\beta)P_1}{2(1-\beta)}\right\},
		\frac{2^{2-p}\mu\lambda_{\min}}{
			\left(\beta+\frac{1-\beta}{P_1}\right)^{p/2}}
		\right\}.
	\end{equation*}
	Combining this estimate with
	\eqref{eq:vr-rate-descent} gives
	\begin{equation}
		\mathbb E_k
		\left[\inf_{u^*\in U^*}\Phi_{k+1}(u^*)\right]
		\le\inf_{u^*\in U^*}\Phi_k(u^*)
		-a_p\left(\inf_{u^*\in U^*}\Phi_k(u^*)\right)^{p/2},
		\qquad k\ge k_{\rm r}.
		\label{eq:vr-sublinear-recursion}
	\end{equation}
	Taking total expectations in
	\eqref{eq:vr-sublinear-recursion} and applying Jensen's inequality,
	since $p/2>1$, yields
	\begin{equation*}\label{eq:vr-sublinear-expectation-recursion}
		\mathbb E
		\left[\inf_{u^*\in U^*}\Phi_{k+1}(u^*)\right]
		\le\mathbb E
		\left[\inf_{u^*\in U^*}\Phi_k(u^*)\right]-a_p\left(\mathbb E
		\left[\inf_{u^*\in U^*}\Phi_k(u^*)\right]\right)^{p/2},
		\qquad k\ge k_{\rm r}.
	\end{equation*}
	Applying Lemma~\ref{lem25} with $q=(p-2)/2$ gives
	\begin{equation}\label{eq:vr-sublinear-tail}
		\mathbb E\left[\inf_{u^*\in U^*}\Phi_{k}(u^*)\right]
		\le\frac{\mathbb E
			\left[\displaystyle\inf_{u^*\in U^*}\Phi_{k_{\rm r}}(u^*)\right]}
			{\left[1+\frac{p-2}{2}
			a_p\left(\mathbb E
			\left[\displaystyle
			\inf_{u^*\in U^*}\Phi_{k_{\rm r}}(u^*)
			\right]\right)^{(p-2)/2}
			(k-k_{\rm r})\right]^{2/(p-2)}},
		\qquad k\ge k_{\rm r}.
	\end{equation}
	If $\mathbb E\left[\displaystyle\inf_{u^*\in U^*}\Phi_{k_{\rm r}}(u^*)\right]=0$, the result follows immediately. Otherwise, for $k\ge k_{\rm r}+1$,
\begin{equation*}
	\mathbb E\left[\inf_{u^*\in U^*}\Phi_k(u^*)\right]
	\le\left[\frac{p-2}{2}a_p(k-k_{\rm r})\right]^{-2/(p-2)}.
\end{equation*}
Since $k-k_{\rm r}\ge\frac{k+1}{k_{\rm r}+2}$ for $k\ge k_{\rm r}+1$, we obtain
\begin{equation*}
	\mathbb E\left[\inf_{u^*\in U^*}\Phi_k(u^*)\right]
	\le\left[\frac{2(k_{\rm r}+2)}{(p-2)a_p}\right]^{2/(p-2)}
	\frac1{(k+1)^{2/(p-2)}}.
\end{equation*}
Thus \eqref{eq:vr-sublinear-rate} holds for every $k\ge0$ by choosing
\begin{equation*}
	c_p:=\max\left\{
	\max_{0\le j\le k_{\rm r}}(j+1)^{2/(p-2)}
	\mathbb E
	\left[\inf_{u^*\in U^*}\Phi_j(u^*)\right],
	\left[\frac{2(k_{\rm r}+2)}{(p-2)a_p}\right]^{2/(p-2)}
	\right\}.
\end{equation*}
\end{proof}

\begin{remark}
	The rates in Theorem~\ref{th43} concern the expected
	infimum of the Lyapunov function. They also imply the corresponding
	expected squared-distance rates because, for every $k$,
	\begin{equation*}
		\beta\,\operatorname{dist}^2(t^k,U^*)
		\le \inf_{u^*\in U^*}\Phi_k(u^*),
		\qquad
		\frac{1-\beta}{P_1}\,\operatorname{dist}^2(s^k,U^*)
		\le \inf_{u^*\in U^*}\Phi_k(u^*).
	\end{equation*}
\end{remark}

\section{Numerical Experiments}
\label{sec:experiments}

We evaluate PF-AEG and AEG-VR on a synthetic generalized-smooth VI and
three learning problems: logistic regression, phase retrieval, and
distributionally robust optimization (DRO). The synthetic example is
constructed to reflect the $\alpha$-symmetric smoothness setting,
whereas the learning problems are used to examine the empirical behavior of
the proposed adaptive mechanisms on finite-sum problems.

For deterministic comparisons, we consider GD, clipped GD \citep{zhang2019clipping}, $\beta$-normalized GD~\citep{chen2023generalized},  the adaptive golden ratio algorithm (aGRAAL) in \citep{malitsky2018graal}, the adaptive stepsize extragradient method (ads-EG), the adaptive
stepsize Popov method (ads-Popov),  the clipped stepsize extragradient
method (Clipped-EG), and the backtracking extragradient method (EG+Backtracking) in \citep{vankov2024gsvimethods}, as well as Novel-EG
\citep{choudhury2025elllipschitz}. 
 For stochastic comparisons, we use SGD, normalized SGD \citep{chen2023generalized},
 normalized SGD with momentum 	\citep{jin2021non}, clipped SGD \citep{zhang2019clipping}, SPIDER
 \citep{fang2018spider,chen2023generalized},  and the stochastic clipped extragradient method (Clipped-SEG)  in~\citep{vankov2025gssvi}. 
 
For PF-AEG, we use the reciprocal of the operator norm (or gradient norm) as the initial stepsize, whereas AEG-VR uses the squared reciprocal as the initial stepsize, with $w=1.05\,\frac{1+\beta}{1-\beta-\beta^2}\,\bar{\lambda}^{1/2}$. All other methods are run with their default settings.

\subsection{Synthetic $\alpha$-Symmetric Generalized-Smooth VI}
\label{ex:2n_non_lipschitz_vi}

We first consider the unconstrained VI on $\mathbb R^{2n}$: find
$u^*\in\mathbb R^{2n}$ such that
\begin{equation}\label{eq:synthetic-vi}
	\langle F(u^*),u-u^*\rangle\ge0,
	\qquad
	\forall u\in\mathbb R^{2n},
\end{equation}
where $u=(u_1,\ldots,u_{2n})^\top$ and
\begin{equation}\label{eq:synthetic-operator}
	F(u)=
	\begin{pmatrix}
		\operatorname{sign}(u_1)|u_1|^{p-1}+u_2\\
		\operatorname{sign}(u_2)|u_2|^{p-1}-u_1\\
		\vdots\\
		\operatorname{sign}(u_{2n-1})|u_{2n-1}|^{p-1}+u_{2n}\\
		\operatorname{sign}(u_{2n})|u_{2n}|^{p-1}-u_{2n-1}
	\end{pmatrix},
	\qquad p>2.
\end{equation}
This construction is a blockwise extension of the two-dimensional example in
\citet{vankov2024gsvimethods}. The operator is
$\alpha$-symmetric generalized smooth with $\alpha=\frac{p-2}{p-1}$, and its unique solution is $u^*=0$. Hence $\operatorname{dist}(u,U^*)=\|u\|$.

We set $n=500$ and initialize all methods at
$u_0=(1,\ldots,1)^\top\in\mathbb R^{2n}$. We consider
$p\in\{2.5,3,4\}$, corresponding to
$\alpha\in\{1/3,1/2,2/3\}$, respectively.

Figure~\ref{fig:synthetic-deterministic} compares PF-AEG with ads-EG,
ads-Popov, EG+Backtracking, Clipped-EG, and Novel-EG. Across all three tested values of $p$, PF-AEG clearly outperforms the compared deterministic methods, exhibiting a substantially faster decrease in the distance to the solution set.
\begin{figure}[ht]
	\centering

	\begin{subfigure}[t]{0.32\textwidth}
		\centering
		\includegraphics[width=\textwidth]{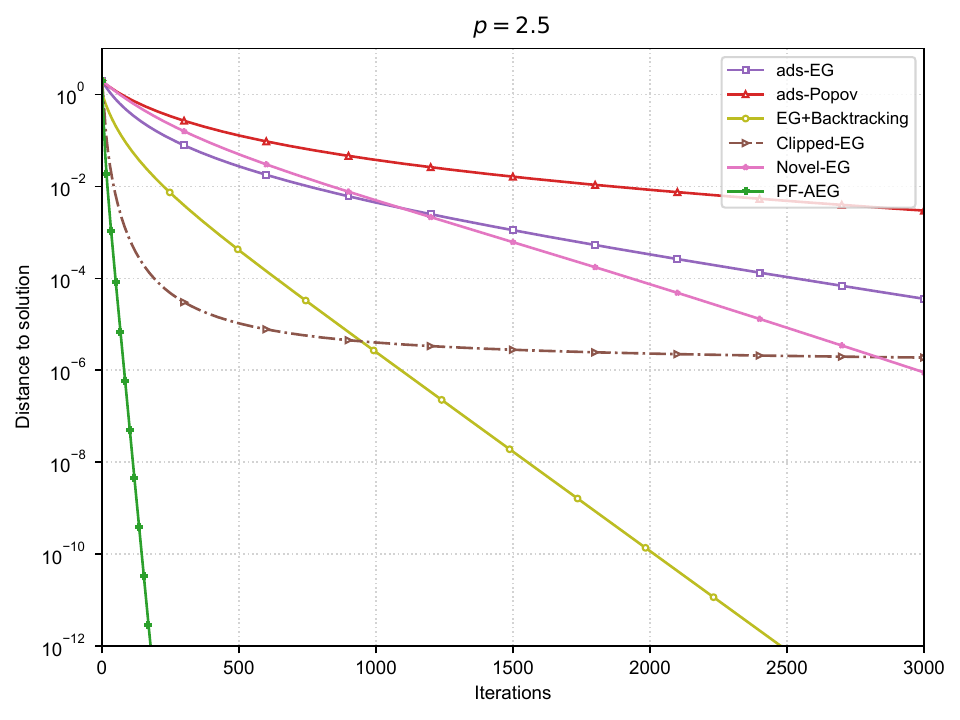}
		\caption{$p=2.5$}
		\label{fig:det-p25}
	\end{subfigure}
	\hfill
	\begin{subfigure}[t]{0.32\textwidth}
		\centering
		\includegraphics[width=\textwidth]{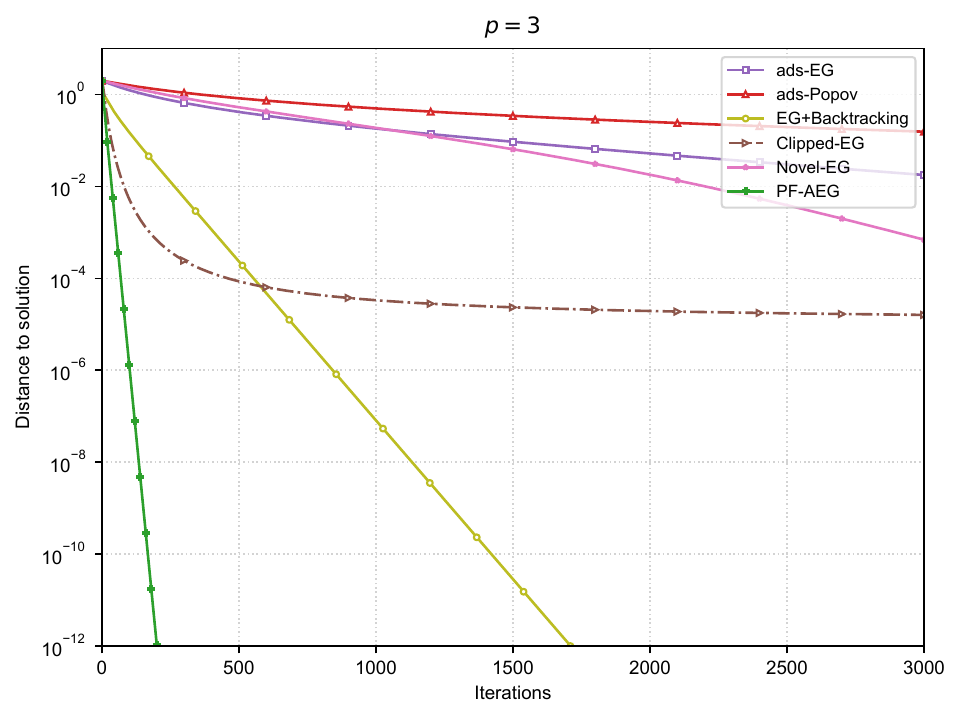}
		\caption{$p=3$}
		\label{fig:det-p3}
	\end{subfigure}
	\hfill
	\begin{subfigure}[t]{0.32\textwidth}
		\centering
		\includegraphics[width=\textwidth]{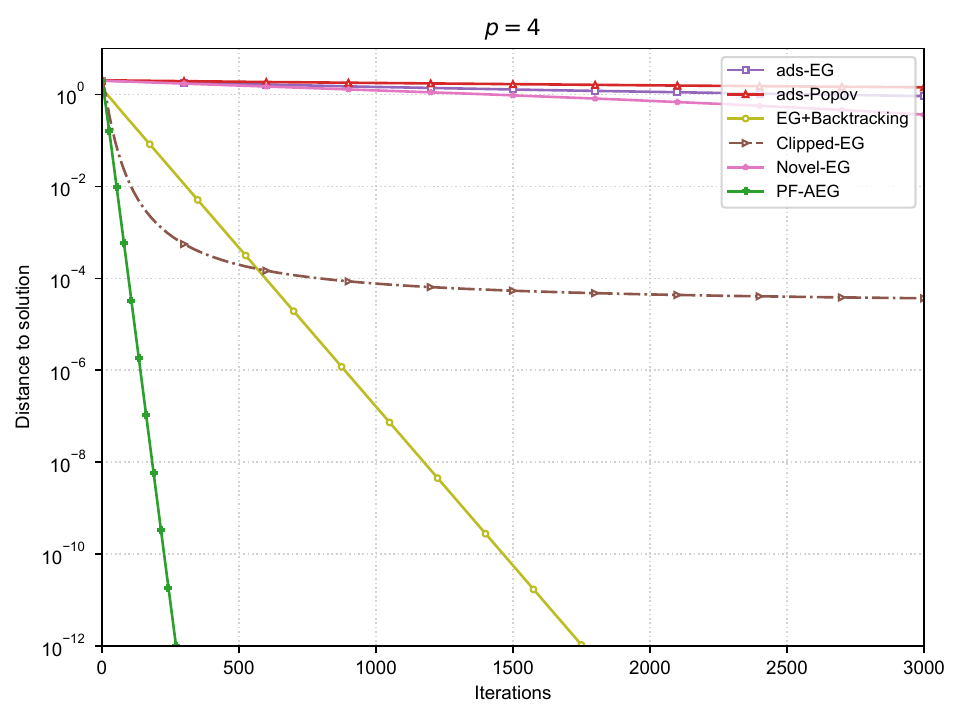}
		\caption{$p=4$}
		\label{fig:det-p4}
	\end{subfigure}

	\caption{Deterministic comparison on the synthetic
		$\alpha$-symmetric generalized-smooth VI for different values of $p$.}
	\label{fig:synthetic-deterministic}
\end{figure}

We next examine the influence of the mixing parameter $\beta$ in AEG-VR.
As shown in Figure~\ref{fig:beta-comparison}, all tested values lead to a
decrease in the reported error, while larger values of $\beta$ produce
faster progress in these runs. We therefore use $\beta=0.49$ in the
remaining AEG-VR experiments, which is consistent with the theoretical
requirement $\beta\in(0,1/2)$.

\begin{figure}[ht]
	\centering

	\begin{subfigure}[t]{0.32\textwidth}
		\centering
		\includegraphics[width=\textwidth]{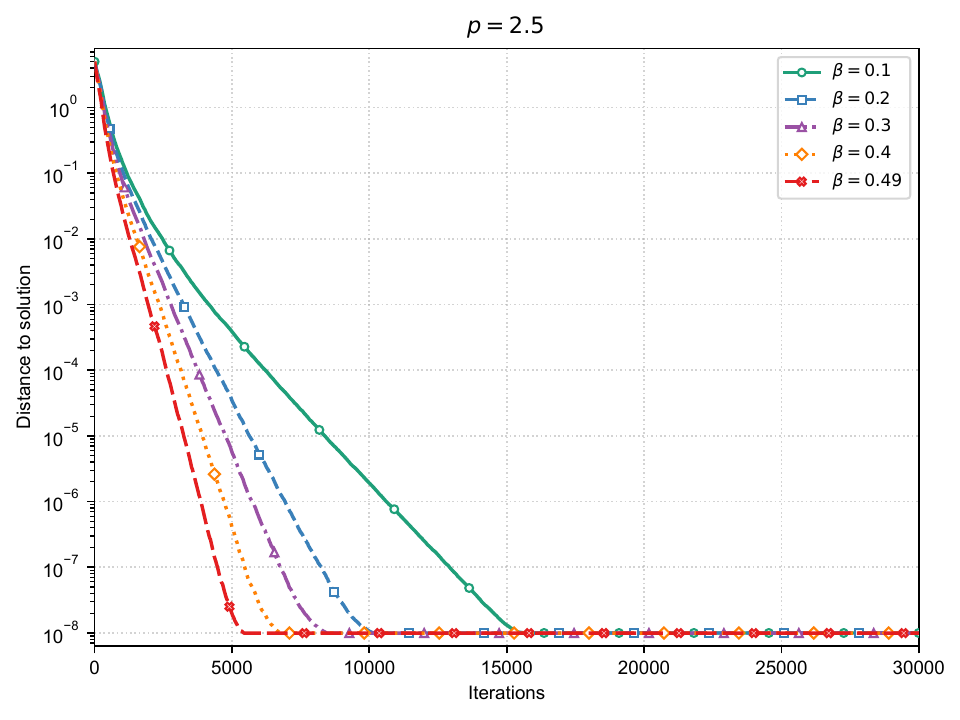}
		\caption{$p=2.5$}
		\label{fig:beta-p25}
	\end{subfigure}
	\hfill
	\begin{subfigure}[t]{0.32\textwidth}
		\centering
		\includegraphics[width=\textwidth]{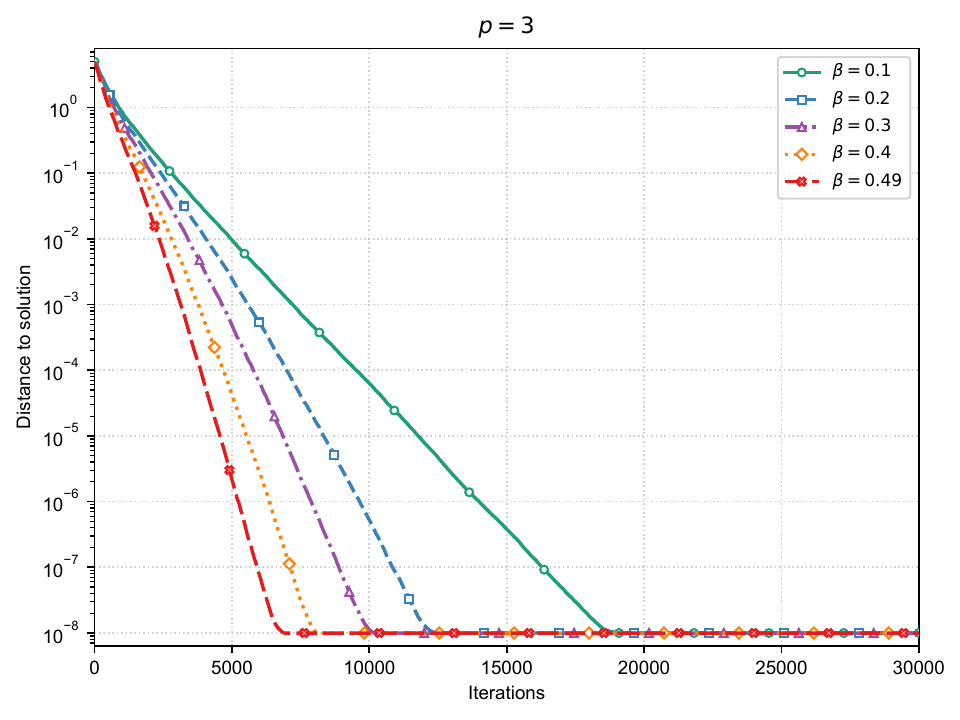}
		\caption{$p=3$}
		\label{fig:beta-p3}
	\end{subfigure}
	\hfill
	\begin{subfigure}[t]{0.32\textwidth}
		\centering
		\includegraphics[width=\textwidth]{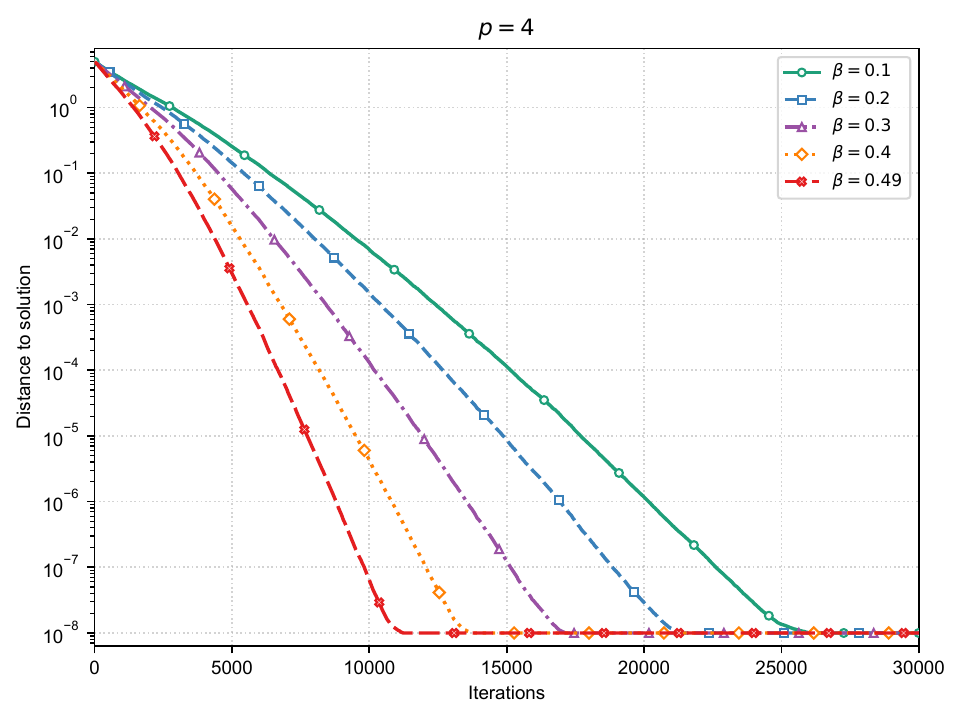}
		\caption{$p=4$}
		\label{fig:beta-p4}
	\end{subfigure}

	\caption{Effect of the mixing parameter $\beta$ on AEG-VR for the
		synthetic generalized-smooth VI for different values of $p$.}
	\label{fig:beta-comparison}
\end{figure}

Figure~\ref{fig:synthetic-stochastic} compares AEG-VR with the clipped
stochastic projected-gradient method (Clipped-SPGM) and the clipped stochastic
extragradient method (Clipped-SEG) \citep{vankov2025gssvi}. On the three
tested instances, AEG-VR yields the fastest decrease of the reported error. We exclude Clipped-SPGM from the following comparisons because it was prohibitively slow in our experiments.

\begin{figure}[ht]
	\centering

	\begin{subfigure}[t]{0.32\textwidth}
		\centering
		\includegraphics[width=\textwidth]{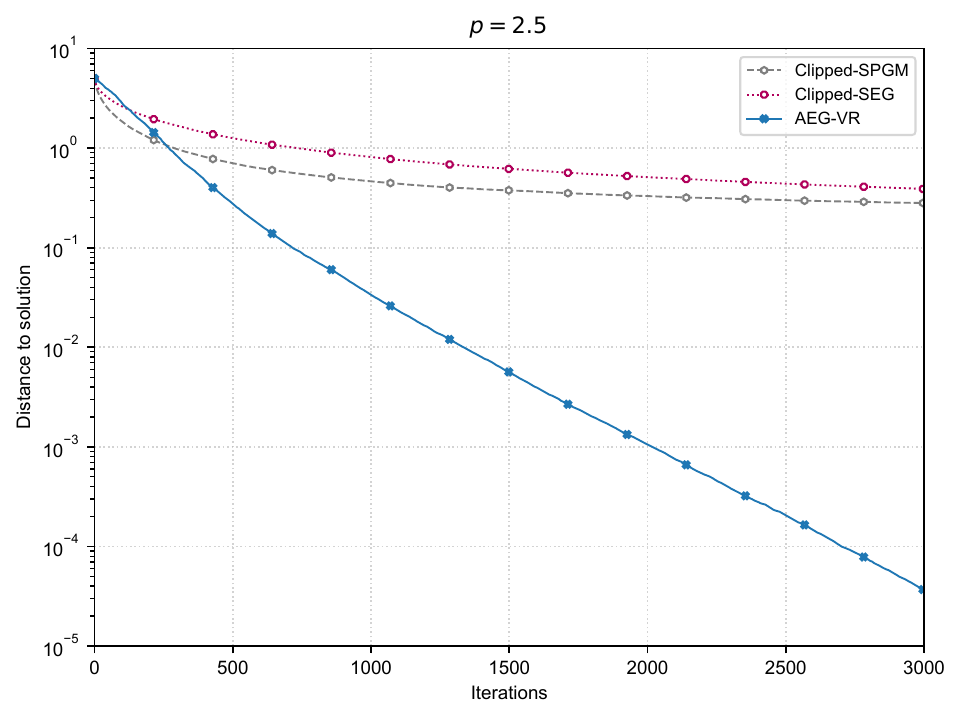}
		\caption{$p=2.5$}
		\label{fig:stochastic-p25}
	\end{subfigure}
	\hfill
	\begin{subfigure}[t]{0.32\textwidth}
		\centering
		\includegraphics[width=\textwidth]{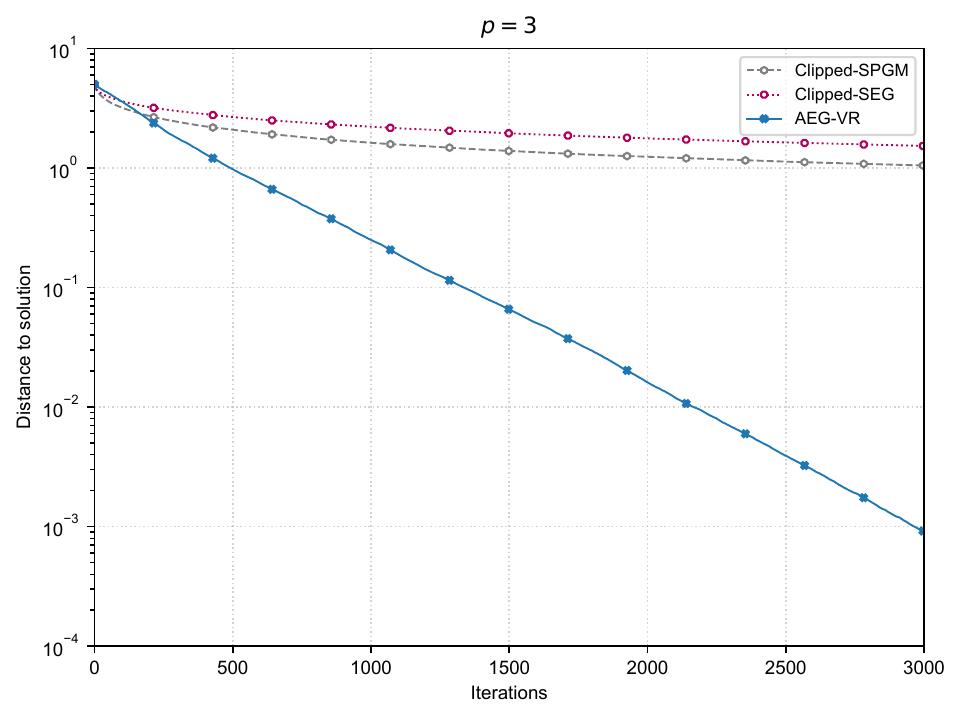}
		\caption{$p=3$}
		\label{fig:stochastic-p3}
	\end{subfigure}
	\hfill
	\begin{subfigure}[t]{0.32\textwidth}
		\centering
		\includegraphics[width=\textwidth]{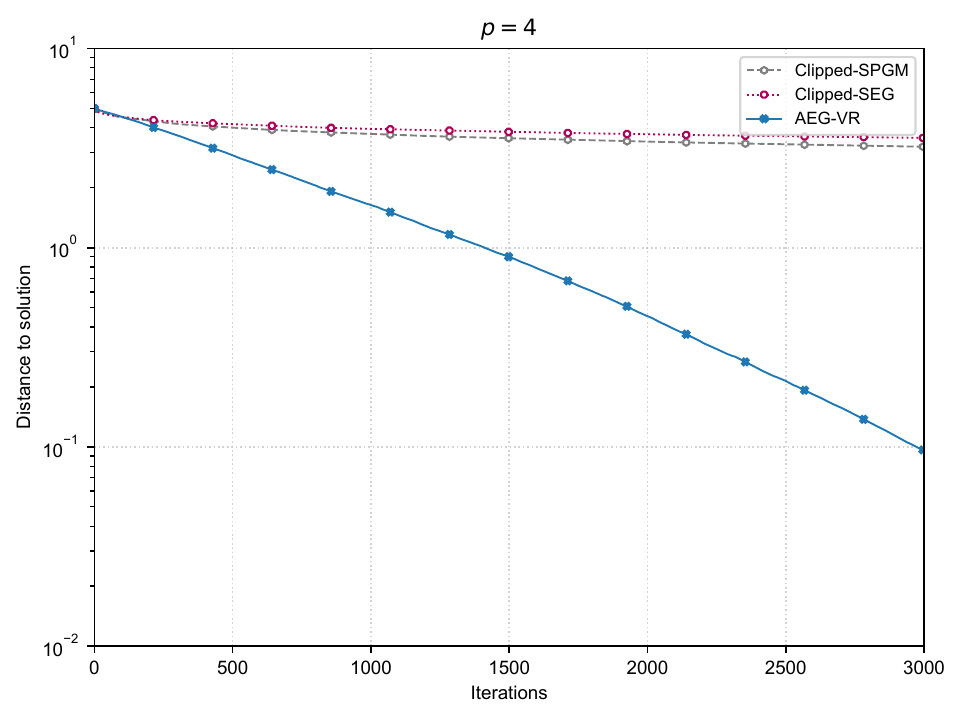}
		\caption{$p=4$}
		\label{fig:stochastic-p4}
	\end{subfigure}

	\caption{Stochastic comparison on the synthetic
		$\alpha$-symmetric generalized-smooth VI for different values of $p$.}
	\label{fig:synthetic-stochastic}
\end{figure}

\subsection{Logistic Regression}
\label{ex:logistic_regression}

We next consider logistic regression,
\begin{equation}\label{lrp}
	\min_{x\in\mathbb R^d}
	f(x):=\frac1N\sum_{i=1}^{N}
	\log\!\left(1+\exp(-y_i a_i^\top x)\right),
\end{equation}
where $a_i\in\mathbb R^d$ and $y_i\in\{-1,1\}$.
The experiments use the \texttt{a9a} and \texttt{mushrooms} datasets from
the LIBSVM collection. A high-accuracy reference solution $x^*$ is computed
using L-BFGS-B \citep{Byrd1995} on the full  objective.

For the deterministic comparison, PF-AEG is compared with clipped GD,
normalized GD, aGRAAL, ads-EG, ads-Popov, EG+Backtracking, Clipped-EG, and
Novel-EG. Figures~\ref{fig:logreg-4panels}(a)--(b) show that, PF-AEG reaches the target accuracy in fewer iterations than other methods on both datasets.

For the stochastic comparison, AEG-VR is evaluated against SGD, normalized
SGD, normalized SGD with momentum, clipped SGD, SPIDER, and Clipped-SEG.
Figures~\ref{fig:logreg-4panels}(c)--(d) show that, 
AEG-VR decreases the error $f(x_k)-f(x^*)$ more rapidly than the compared methods.

\begin{figure}[ht]
	\centering
	\begin{minipage}[t]{0.48\linewidth}
		\centering
		\includegraphics[width=\linewidth]{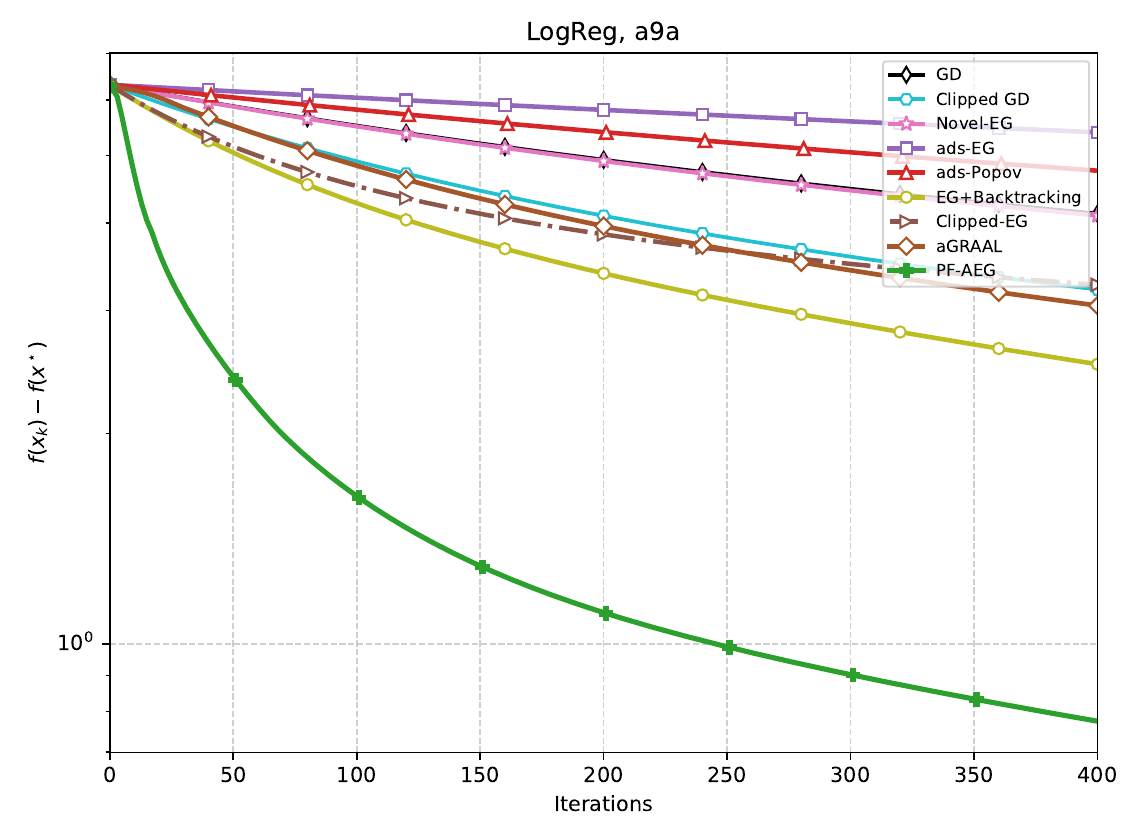}\\[-0.5ex]
		\small (a) \texttt{a9a}, deterministic
	\end{minipage}\hfill
	\begin{minipage}[t]{0.48\linewidth}
		\centering
		\includegraphics[width=\linewidth]{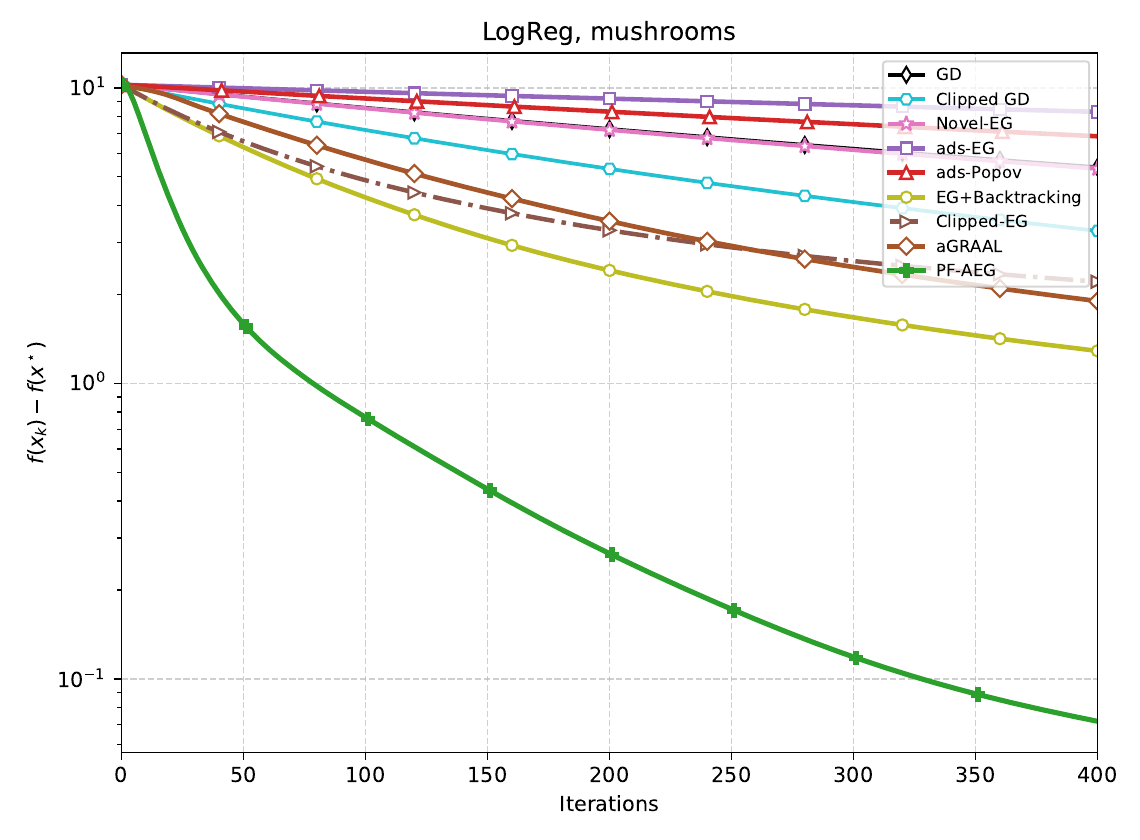}\\[-0.5ex]
		\small (b) \texttt{mushrooms}, deterministic
	\end{minipage}
	
	\vspace{0.4em}
	
	\begin{minipage}[t]{0.48\linewidth}
		\centering
		\includegraphics[width=\linewidth]{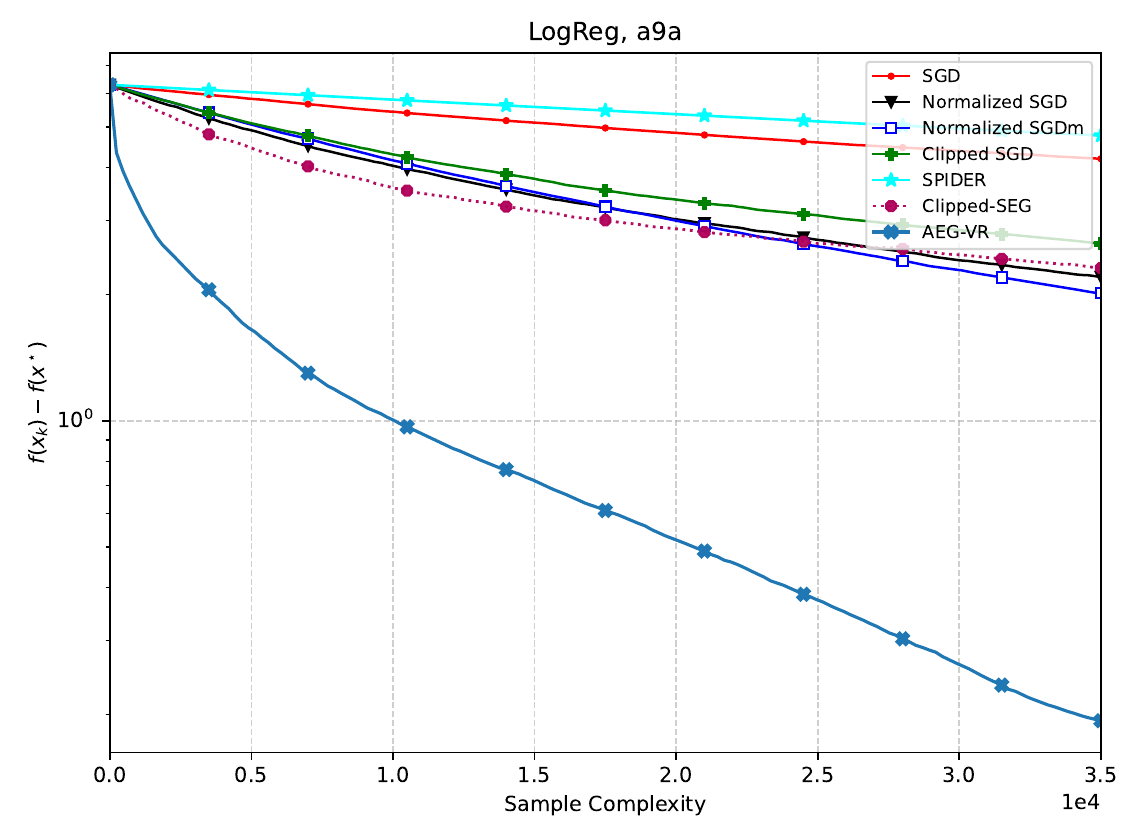}\\[-0.5ex]
		\small (c) \texttt{a9a}, stochastic
	\end{minipage}\hfill
	\begin{minipage}[t]{0.48\linewidth}
		\centering
		\includegraphics[width=\linewidth]{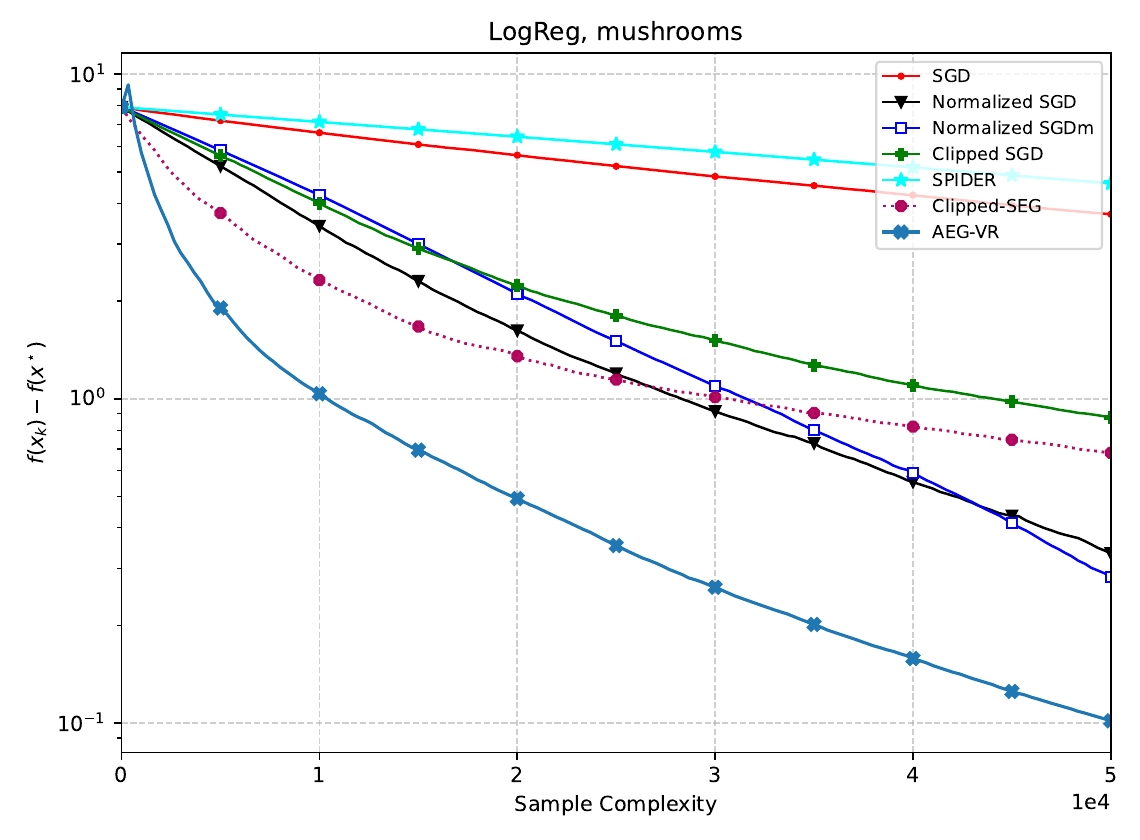}\\[-0.5ex]
		\small (d) \texttt{mushrooms}, stochastic
	\end{minipage}
	\caption{Deterministic and stochastic comparisons for logistic regression
		on the \texttt{a9a} and \texttt{mushrooms} datasets.}
	\label{fig:logreg-4panels}
\end{figure}

\subsection{Phase Retrieval}
\label{ex:phase-retrieval}

We consider the phase-retrieval problem
\begin{equation}\label{pr}
	\min_{z\in\mathbb R^d}
	f(z):=\frac{1}{2m}\sum_{r=1}^{m}
	\left(y_r-|a_r^\top z|^2\right)^2,
\end{equation}
where $a_r\in\mathbb R^d$ is the $r$-th measurement vector.
We set $d=100$ and $m=3000$. The entries of $a_r$ and the
ground-truth signal $z^\star$ are generated independently from
$\mathcal N(0,0.5)$, while the initial point $z_0$ is generated
entrywise from $\mathcal N(5,0.5)$. The observations are
\begin{equation*}
	y_r=|a_r^\top z^\star|^2+n_r,
	\qquad
	n_r\sim\mathcal N(0,4^2).
\end{equation*}

Figure~\ref{fig:phase-retrieval}(a) gives the deterministic comparison.
PF-AEG decreases the objective function value rapidly and outperforms  the
best-performing baseline in this experiment. Figure~\ref{fig:phase-retrieval}(b)
gives the stochastic comparison, where AEG-VR remains competitive with the
strongest stochastic baselines while using no generalized-smoothness or quasi-sharpness constants
in its stepsize rule.

\begin{figure}[ht]
	\centering
	\begin{minipage}[t]{0.47\linewidth}
		\centering
		\includegraphics[width=\linewidth]
		{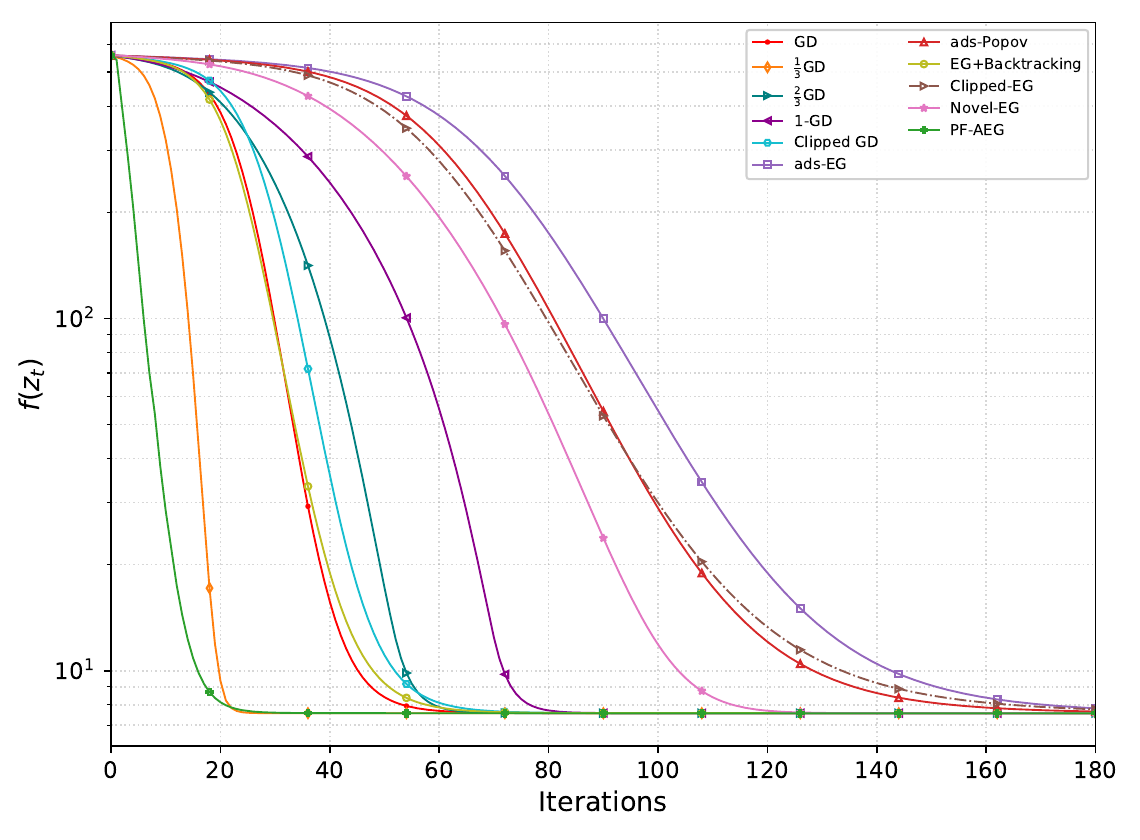}\\[-0.5ex]
		\small (a) Deterministic methods
	\end{minipage}
	\hfill
	\begin{minipage}[t]{0.47\linewidth}
		\centering
		\includegraphics[width=\linewidth]
		{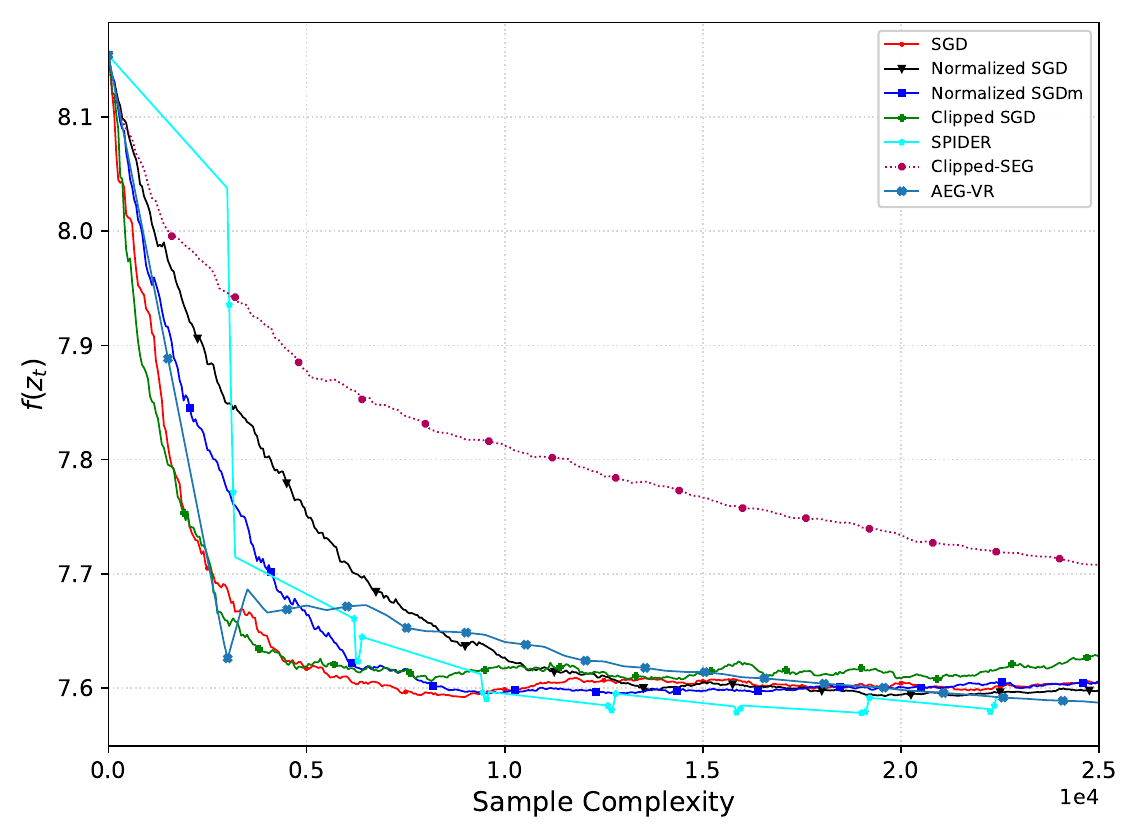}\\[-0.5ex]
		\small (b) Stochastic methods
	\end{minipage}
	\caption{Deterministic and stochastic comparisons for the
		phase-retrieval problem \eqref{pr}.}
	\label{fig:phase-retrieval}
\end{figure}

\subsection{Distributionally Robust Optimization}
\label{ex:dro}

We finally consider the DRO problem obtained from the standard dual
reformulation with a $\psi$-divergence penalty:
\begin{equation}\label{eq:dro-objective}
	\min_{\theta\in\mathbb R^d,\,\eta\in\mathbb R}
	L_N(\theta,\eta):=
	\frac{\tau}{N}\sum_{i=1}^{N}
	\psi^*\!\left(\frac{\ell_i(\theta)-\eta}{\tau}\right)+\eta,
\end{equation}
where $\tau>0$ is the penalty parameter and $\psi^*$ is the convex
conjugate associated with the chosen divergence.

We use the Life Expectancy dataset and retain the first $N=2000$
preprocessed observations $\{(x_i,y_i)\}_{i=1}^{N}$, where
$x_i\in\mathbb R^{34}$ and $y_i\in\mathbb R$.
Missing entries are imputed using featurewise medians. Numerical features are
winsorized and standardized, and the categorical variables
\texttt{country} and \texttt{status} are removed. Independent Gaussian
perturbations $\epsilon_i\sim\mathcal N(0,1)$
are added to the target values.

We set $\tau=0.01$ and use
\begin{equation*}
	\psi^*(t)=\frac{(t+2)_+^2}{4}-1,
\end{equation*}
corresponding to the $\chi^2$-divergence. For sample $i$, the loss is
\begin{equation*}\label{eq:dro-loss}
	\ell_i(\theta)=
	\frac12\left(y_i-x_i^\top\theta\right)^2
	+0.1\sum_{j=1}^{34}
	\log\!\left(1+|\theta^{(j)}|\right).
\end{equation*}
We report the profile objective
\begin{equation}\label{eq:dro-profile}
	\Psi(\theta_k):=\min_{\eta\in\mathbb R}L_N(\theta_k,\eta).
\end{equation}

Figure~\ref{fig:dro-exp}(a) presents the deterministic comparison. On the reported instance, PF-AEG decreases the profile objective more rapidly than the comparison methods after about 10 iterations. Figure~\ref{fig:dro-exp}(b) shows the stochastic comparison, in which AEG-VR outperforms the other stochastic methods.

\begin{figure}[ht]
	\centering
	\begin{minipage}[t]{0.48\linewidth}
		\centering
		\includegraphics[width=\linewidth]
		{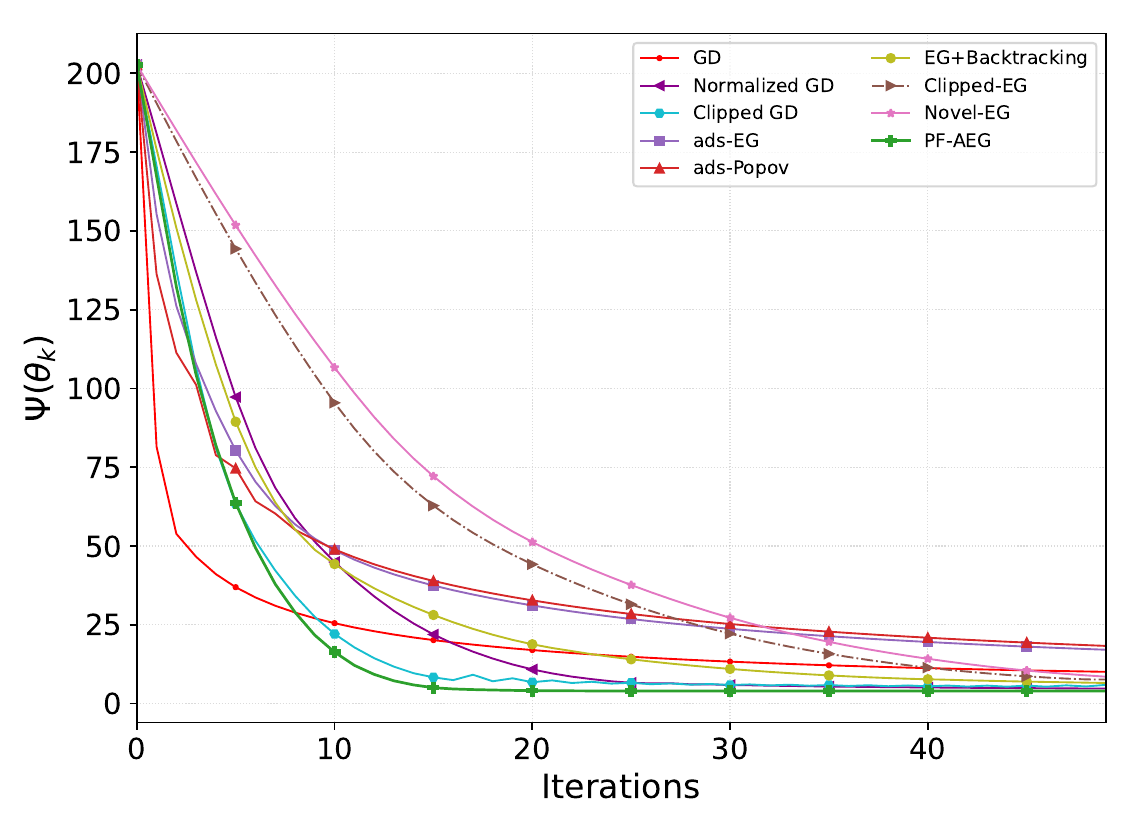}\\[-0.5ex]
		\small (a) Deterministic methods
	\end{minipage}
	\hfill
	\begin{minipage}[t]{0.48\linewidth}
		\centering
		\includegraphics[width=\linewidth]
		{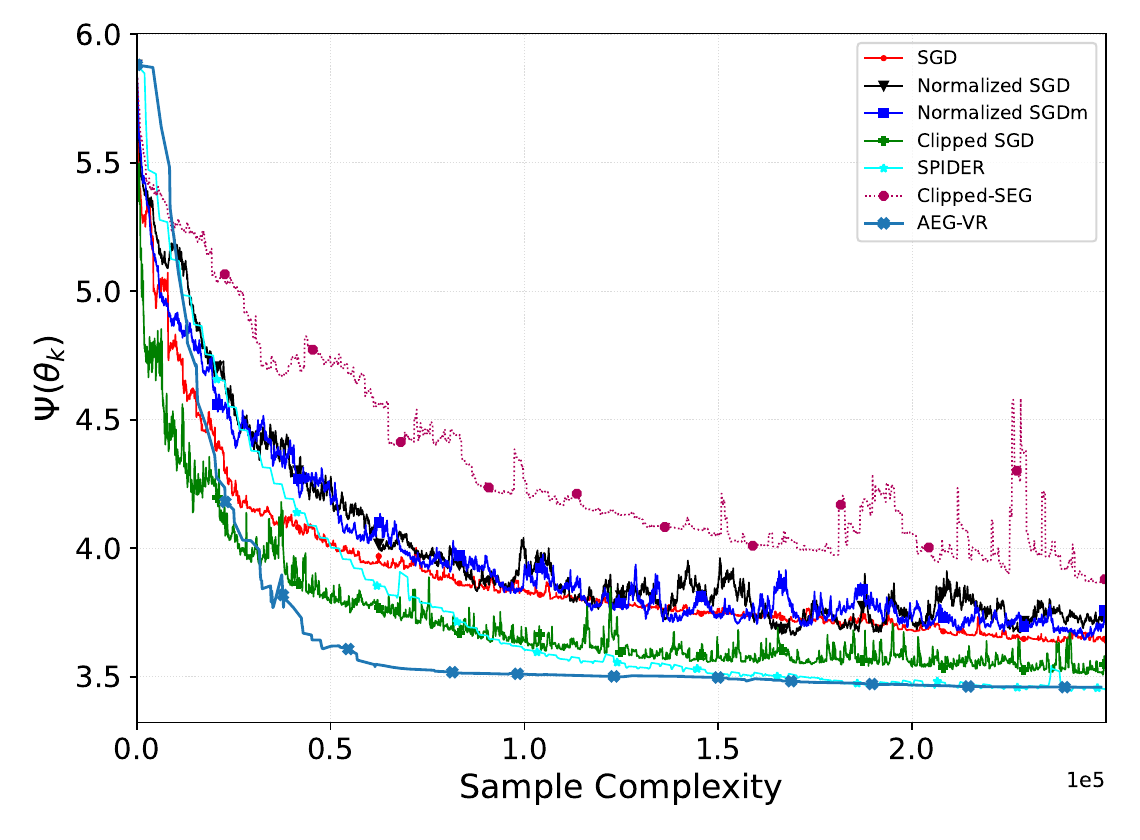}\\[-0.5ex]
		\small (b) Stochastic methods
	\end{minipage}
	\caption{Deterministic and finite-sum comparisons for the DRO experiment
		on the Life Expectancy dataset.}
	\label{fig:dro-exp}
\end{figure}

Overall, the experiments illustrate the empirical behavior of PF-AEG and AEG-VR in settings ranging from a synthetic generalized-smooth VI to finite-sum learning problems. PF-AEG often outperforms  the adaptive, clipped, normalized, and backtracking methods in the deterministic
experiments, while AEG-VR performs favorably against the stochastic and
variance-reduced baselines in the reported finite-sum experiments. These
results are consistent with the intended role of the proposed stepsize rules:
they adapt using quantities generated by the iterates without requiring the
generalized-smoothness or quasi-sharpness parameters.

\section{Conclusions}
\label{sec:conclusion}

We introduced adaptive extragradient methods for variational inequalities
under $\alpha$-symmetric generalized smoothness and
$p$-quasi-sharpness. We developed PF-AEG, an explicit and
line-search-free method whose stepsizes are determined entirely by quantities
generated during the iterations and therefore require no prior knowledge of
the generalized-smoothness or quasi-sharpness parameters. Its last-iterate
analysis yields linear convergence for the full range $0<p\le2$, extending
the corresponding guarantee beyond the previously studied case $p=2$, and
recovers the rate $\mathcal O(k^{-2/(p-2)})$ for $p>2$. Building on the
same adaptive principle, AEG-VR extends the construction to finite-sum VIs by
combining adaptive stepsizes with variance reduction and randomized snapshot
updates. The numerical experiments on synthetic generalized-smooth VIs and
learning problems show that the proposed methods compare favorably with the
 adaptive, clipped, normalized, backtracking, and stochastic
baselines, while avoiding problem-dependent parameters in stepsize selection.


\vskip 0.2in
\bibliographystyle{plainnat}

\end{document}